\numberwithin{equation}{section}
\newtheorem{proposition}[subsubsection]{Proposition}
\newtheorem{corollary}[subsubsection]{Corollary}
\newtheorem{theorem}[subsubsection]{Theorem}
\theoremstyle{definition}
\newtheorem{definition}[subsubsection]{Definition}
\newtheorem{lemma}[subsubsection]{Lemma}
\theoremstyle{remark}
\newtheorem{remark}[subsubsection]{Remark}
\DeclareMathAlphabet{\mathbbold}{U}{bbold}{m}{n}
\numberwithin{equation}{subsubsection}
\newcommand*{\id}{\mathrm{id}}
\newcommand*{\Hom}{\operatorname{Hom}}
\newcommand*{\Z}{\mathbb{Z}}
\newcommand*{\sign}{\mathrm{Sign}}
\newcommand*{\QQ}{\mathbb{Q}}
\newcommand*{\sheaf}[1]{{\mathcal #1}}
\newcommand*{\Oo}{\sheaf{O}}
\newcommand*{\Spec}{\mathrm{Spec}}
\newcommand{\can}{\mathrm{can}}
\newcommand*{\I}{\mathrm{I}}                
\newcommand{\on}{ ~\mathrm{on}~ }
\newcommand{\D}{D}
\newcommand{\op}{\operatorname}
\newcommand{\R}{\mathbb{R}}
\DeclareMathOperator{\support}{supp}
\newcommand{\Sper}{\textnormal{Sper}} 
\newcommand{\Q}{\mathcal{Q}}
\newcommand{\km}{k_\mathfrak{m}}
\newcommand{\W}{\mathrm{W}}
\newcolumntype{M}[1]{>{\centering\arraybackslash}m{#1}}
\newcolumntype{N}{@{}m{0pt}@{}}
\newcommand{\xdownarrow}[1]{%
	{\left\downarrow\vbox to #1{}\right.\kern-\nulldelimiterspace}
}
\newcommand{\xuparrow}[1]{%
	{\left \uparrow\vbox to #1{}\right.\kern-\nulldelimiterspace}
}
\newcommand{\extp}{\@ifnextchar^\@extp{\@extp^{\,}}}
\def\@extp^#1{\mathop{\bigwedge\nolimits^{\!#1}}}
\newcommand{\kmm}{k(\mathfrak{m})}
\newcommand{\kmmt}{k(\tilde{\mathfrak{m}})}
\newcommand{\mmt}{\tilde{\mathfrak{m}}}
\title{A Gersten complex on real schemes}
\author{Fangzhou Jin}
\address{Fangzhou Jin, School of Mathematical Sciences\\
Tongji University\\
Siping Road 1239\\
200092 Shanghai\\
P. R. China}
\email{\href{mailto:fangzhoujin@tongji.edu.cn}{fangzhoujin@tongji.edu.cn}}
\author{Heng Xie}
\address{Heng Xie, School of Mathematics\\
Sun Yat-sen University, 
510275 Guangzhou\\
P. R. China}
\email{\href{xieh59@math.sysu.edu.cn}{xieh59@math.sysu.edu.cn}}
\date{\today}
\begin{document}

\maketitle

\begin{abstract}
We discuss a connection between coherent duality and Verdier duality via a Gersten-type complex of sheaves on real schemes, and show that this construction gives a dualizing object in the derived category, which is compatible with the exceptional inverse image functor $f^!$. The hypercohomology of this complex coincides with hypercohomology of the sheafified Gersten-Witt complex, which in some cases can be related to topological or semialgebraic Borel-Moore homology. 
\end{abstract}

\tableofcontents

\section{Introduction}

In algebraic geometry and topology, there are several frameworks in which local and global duality theorems can be incarnated by a machinery of \emph{Grothendieck six functors formalism}, such as
\begin{itemize}
\item [(D1)] coherent duality in the derived category of quasi-coherent sheaves over a scheme;
\item [(D2)] \'etale duality in the derived category of torsion \'etale sheaves over a scheme;
\item [(D3)] Verdier duality in the derived category of sheaves over a locally compact topological space;
\item [(D4)] $\mathbb{A}^1$-homotopic duality in triangulated categories of motivic sheaves.
\end{itemize}

All these formalisms are modelled over (D1) in \cite{Har}, where one of the main upshot is the existence of \emph{dualizing objects} and the fact that they are preserved by the exceptional inverse image functor $f^!$. However, (D1) behaves quite differently from the others: for example, in the coherent context, the exceptional direct image functor $f_!$ does not exist, and there is no suitable subcategory of \emph{constructible objects} perserved by the six functors, while base change and purity theorems hold in a much greater generality. One of the main goals of this paper is to build up a bridge between (D1) and a variant of (D3) related to semialgebraic spaces, which is provided by a Gersten complex of \textit{real schemes}. In this new way, dualizing complexes in (D1) naturally give rise to some explicit dualizing objects in (D3). 

From a more concrete point of view, we are motivated by the following question:
\begin{center}
Is there a natural analog of singular cohomology in algebraic geometry?
\end{center} 
Classically, for smooth varieties $X$ over  $\mathbb{C}$, there are \emph{cycle class maps} 
\begin{align}
\op{CH}^k(X)\to \op{H}^{2k}(X(\mathbb{C}),\Z)
\end{align}
from Chow groups to singular cohomology.\ 
For an algebraic variety $X$ over the field of real numbers $\R$, we have the following analogue of the cycle class maps: if we denote by $\I^j$ the sheaf of $j$-th power of the fundamental ideal in the Witt group, 
then there are \emph{twisted signature morphisms} from twisted $\I^j$-cohomology to twisted singular cohomology with local coefficients on $X(\R)$
\begin{align}
\label{eq:signtwi}
\sign: H^i(X,\I^j(L)) \to H^i(X(\R),\Z(L))
\end{align}
for any line bundle $L$ on $X$, which are compatible with pullbacks, pushforwards and intersection products, and the map~\eqref{eq:signtwi} is an isomorphism if $j \geq \dim X +1 $ (see \cite{jacobson} for the untwisted case, and \cite{HWXZ} for the general case).
\footnote{In \cite{HWXZ}, it is also shown that if $X$ is a smooth cellular variety, then the map~\eqref{eq:signtwi} is always an isomorphism for any $j$.}
Here the ``twists'' by a line bundle $L$ are defined \`a la Morel (see~\ref{eq:twimorel} below), which can also be understood on the Witt-theoretic side, see \cite[2.32]{HWXZ}.
\footnote{Such twists do not appear in the case for varieties over $\mathbb{C}$, as complex vector bundles are always canonically oriented, which is not the case for real vector bundles.}



The intrinsic relation between the $\I^j$-cohomology and real algebraic geometry can be better understood via the theory of \emph{real schemes}: for any scheme $X$, the associated real scheme $X_r$ is a topological space whose underlying sets consists of points of $X$ together with orderings on the residue fields (see~\ref{num:rsch} below), which is a \emph{spectral space} in the sense of Hochster (\cite{Hoc}). The definition of the real scheme stems from the study of the link between real algebraic geometry and \'etale cohomology with $2$-torsion coefficients (\cite{Sch}), and recent progress provides more interesting relations of this object with other areas of mathematics: for example, as a variant of the map~\eqref{eq:signtwi}, Jacobson (\cite[Theorem 8.6]{jacobson}) constructs an isomorphism relating the sheaf cohomology of the powers of the fundamental ideal in the Witt groups of quadratic forms with the constant sheaf cohomology on the real scheme 
\begin{align}
\label{eq:jacreal}
H^n(X,\I^\infty)\simeq H^n(X_r,\mathbb{Z})
\end{align}
(see~\ref{num:jac8.6} below for a generalization); more recently, Bachmann shows that the minus part of the motivic stable homotopy category is precisely provided by the information on the real scheme (\cite[Theorem 35]{Bac}). 

On the other hand, within the framework of the six functors in motivic homotopy, a systematic theory that relates cohomology and \emph{Borel-Moore theories} (also called \emph{bivariant theory}) is developed by D\'eglise (\cite{Deg}), based on some ideas of Voevodsky. The table below indicates how these two types of theories can be philosophically compared (see also \cite{DJK}):
\begin{center}
\begin{tabular}{ |c|c| } 
 \hline
 cohomology & Borel-Moore theory  \\ 
 $\otimes$-invertible objects & dualizing objects  \\ 
 preserved by $f^*$ & preserved by $f^!$  \\ 
 ring structure & module over cohomology  \\ 
 \hline
\end{tabular}
\end{center}
For example, algebraic $G$-theory agrees with the Borel-Moore theory associated to algebraic $K$-theory (\cite{Jin1}). For smooth schemes, cohomology agree with Borel-Moore theory up to a twist by Poincar\'e duality. In general, cohomology theories usually have a ring structure, while Borel-Moore theories have better localization properties.

From this point of view, Jacobson's isomorphism~\eqref{eq:jacreal} can be regarded as an identification of cohomology theories. Since Bachmann's theorem relates sheaves on real schemes to motivic homotopy, a natural question arises whether there is a Borel-Moore type analogue . Just as Grothendieck's coherent duality (\cite{Har}) refines Serre's duality by considering \emph{dualizing complexes} instead of \emph{dualizing sheaves}, it is attempting to think that such an analogue for real schemes should involve complexes of sheaves. Following this vein, one of the first results of this paper shows that, the isomorphism~\eqref{eq:signtwi} naturally generalizes to singular varities, and provide the following signature isomorphism
\begin{align}
\sign: H_{n} (X, \I^\infty(L)) \simeq H^{\mathrm{BM}}_n(X(\R), \Z(L))
\end{align}
where the right side is the classical Borel-Moore homology in topology (see Corollary \ref{coro:signature-borel-moore} below).

On the Witt-theoretic side, a candidate of such a complex is constructed by Gille (\cite[7.1]{Gil07}), which associates to a (coherent) dualizing complex a Gersten-type complex of Witt groups. We will reproduce this construction in Section~\ref{sec:BWGcomplex}, while avoiding the choices of injective hulls as in \cite{Gil07}. It is worth mentioning that a very similar construction using residual complexes appears recently in \cite[\S 3]{Plowman}. For a discussion for the corresponding Gersten conjecture, see Remark~\ref{rmk:gerstenconj}. 

On the side of real schemes, our inspiration comes from a complex of sheaves defined by Scheiderer (\cite[Corollary 2.3]{Sch1}): for a regular scheme, Scheiderer's result states that the constant sheaf over the real scheme has a canonical resolution by acyclic sheaves which comes from the $E_1$-page of the coniveau spectral sequence. Since the structure sheaf of a regular scheme is a dualizing complex, by comparing with Gille's complex, one tend to believe that a natural analogue on real schemes could be given by a complex of acyclic sheaves associated to a (coherent) dualizing complex.

These considerations lead us to define, for a scheme $X$ with a dualizing complex $K$, a new complex $\underline{G}(X_r,K^{ })$ of sheaves of abelian groups on $X_r$, as a Rost-Schmid type complex where the residue homomorphisms are given by the information captured from the stalks of the dualizing complex, using the twisting techniques from \cite{HWXZ}. Such a construction is very natural to us, and maybe inherent to Grothendieck's insights on ``residues and duality''. 

\subsubsection*{Acknowledgments}
We would like to thank Fr\'ed\'eric D\'eglise, Jean Fasel, Niels Feld, Adeel Khan, James Plowman, Marco Schlichting and Marcus Zibrowius for helpful discussions. FJ learned the difference between various versions of the six functors formalism from lectures on zeta values and L-functions of motives given by Bruno Kahn at Universit\'e Paris 6.

Both authors are partially supported by the DFG Priority Programme SPP 1786. FJ acknowledges support from the National Key Research and Development Program of China Grant Nr.2021YFA1001400, the National Natural Science Foundation of China Grant Nr.12101455, the Fundamental Research Funds for the Central Universities, and the ERC Project-QUADAG, which has received funding from the European Research Council (ERC) under the European Union’s Horizon 2020 research and innovation programme (grant agreement Nr.832833). HX is supported by the Fundamental Research Funds from the Central Universities, Sun Yat-sen University 34000-31610294, NSFC Grant 12271529, NSFC Grant 12271500, EPSRC Grant EP/M001113/1 and he would also like to thank Max Planck Institute for Mathematics in Bonn for its hospitality. HX would like to acknowledge the DFG-funded research training group GRK 2240: Algebro-Geometric Methods in Algebra, Arithmetic and Topology.

\section{Conventions and preliminaries}

\subsection{On schemes and sheaves}
\label{sec:conventionsch}

\subsubsection{}
A \textbf{precomplex} (of abelian groups) stands for a sequence of maps of abelian groups.

\subsubsection{}
All schemes considered are noetherian schemes of finite Krull dimension. Smooth morphisms are seperated of finite type. If $X$ is a scheme and $x$ is a point of $X$, denote by $k(x)$ the residue field of $x$. 

\subsubsection{}
All sheaves considered are sheaves of abelian groups. We identify a vector bundle with the locally free sheaf given by the sheaf of its sections.

\subsubsection{}
\label{sec:outtens}
	If $X\to S$ and $Y\to S$ are two morphisms of schemes, denote by $p_X:X\times_SY\to X$ and $p_Y:X\times_SY\to Y$ the projections. If $\mathcal{F}$ (respectively $\mathcal{G}$) is a sheaf of abelian groups on $X$ (respectively $Y$), denote by $\mathcal{F}\boxtimes_S\mathcal{G}:=p_X^*\mathcal{F}\otimes p_Y^*\mathcal{G}$.

\subsubsection{}
\label{eq:twimorel}
Let $(X,\mathcal{O}_X)$ be a ringed space and let $ L$ be an invertible $\mathcal{O}_X$-module. If $\mathcal{F}$ is a sheaf on $X$ with an action of the sheaf $\mathcal{O}_X^\times$, we define the \textbf{twisted sheaf} $\mathcal{F}( L)$ to be the sheaf associated to the presheaf
	\begin{align}
	U\mapsto \mathcal{F}(U)\otimes_{\mathbb{Z}[\mathcal{O}_X^\times(U)]}\mathbb{Z}[ L(U)^\times]
	\end{align}
	where $\mathbb{Z}[ L(U)^\times]$ denotes the free abelian group generated by elements of $ L(U)^\times$ (\cite[\S 5]{Mor}, \cite[Definition 2.28]{HWXZ}). An isomorphism $ L\simeq\mathcal{O}_X$ induces an isomorphism $\mathcal{F}\simeq\mathcal{F}( L)$, so in particular the twisted sheaf $\mathcal{F}( L)$ is always locally isomorphic to $\mathcal{F}$. 
	
	The twisting operation is compatible with pullbacks and tensor products of invertible modules: if $f:Y\to X$ is a morphism, then $f^*(\mathcal{F}( L))=(f^*\mathcal{F})(f^* L)$; if $ L$ and $ L'$ are two invertible $\mathcal{O}_X$-modules, then $(\mathcal{F}( L))( L')=\mathcal{F}( L\otimes L')$.

\subsubsection{}
A quasi-projective morphism $f:Y\to X$ is a \textbf{local complete intersection} (abbreviated as \textbf{lci}) morphism if it factors as a regular closed immersion followed by a smooth morphism. Such a morphism is a perfect morphism, and its cotangent complex $\tau_f$ is a perfect complex of $\mathcal{O}_Y$-modules.

\subsubsection{}
Let $X$ be a regular scheme. For any point $x\in X$, we denote by $\omega_{x/X}$ the inverse of the determinant of the normal bundle of the regular closed immersion $\operatorname{Spec}(k(x)) \to\operatorname{Spec}(\mathcal{O}_{X,x})$. 


\subsection{On dualizing complexes}
\label{num:recalldualcomp}

\subsubsection{}
If $X$ is a scheme, we denote by $D^b_c(\mathcal{Q}(X))$ the derived category of bounded complexes of quasi-coherent sheaves on $X$ with coherent cohomology. 

\subsubsection{}
For $M^\bullet\in D^b_c(\mathcal{Q}(X))$, the \textbf{support} of $M^\bullet$ is the subset of $X$ defined as 
\begin{align} 
	\support M^\bullet=\{ x \in X | \mathcal{H}^i(M^\bullet)_x \neq 0 \textnormal{ for some $i \in \Z$ }  \}.
\end{align}

\subsubsection{}
A \textbf{dualizing complex} on a scheme $X$ is a complex $K^\bullet \in D^b_c(\mathcal{Q}(X))$ 
\begin{align}
K^\bullet= ( \cdots \rightarrow 0 \rightarrow K^m \stackrel{d^m}\longrightarrow K^{m+1} \stackrel{d^{m-1}}\longrightarrow \cdots \stackrel{d^{n-1}}\longrightarrow K^n \rightarrow 0 \rightarrow \cdots)
\end{align} 
such that for any $\mathcal{F}^\bullet\in D^b_c(\mathcal{Q}(X))$ the canonical map
\begin{align}
	\mathcal{F}^\bullet\to R\underline{Hom}_{\mathcal{O}_X}( R\underline{Hom}_{\mathcal{O}_X}(\mathcal{F}^{\bullet},K^{\bullet}),K^{\bullet}) 
\end{align}
	is an isomorphism. If moreover, $K^r$ is injective and an essential extension of $\ker(d^r)$ for all $r \in \mathbb{Z}$, we say $K^\bullet$ is a \textbf{minimal dualizing complex}. We will usually use the simplified notation $K$ without the bullet to mean the complex $K^\bullet$, if no confusion occurs.  
	
	Note that in contrast, a \emph{dualizing object} will refer to the real \'etale duality in this paper, see Definition~\ref{def:dualizing}.


\subsubsection{}
If $f:X\to Y$ is a separated morphism of finite type, there is a functor $f^!:D^+_c(\mathcal{Q}(Y))\to D^+_c(\mathcal{Q}(X))$, which can be constructed using smoothings (\cite[III 8.7]{Har}) or Nagata compactifications (\cite{LH}). 

\subsubsection{}
\label{num:dualunique}
If $X$ is a scheme, any two dualizing complexes on $X$ differ by an invertible sheaf up to a shift (\cite[V 3.1]{Har}). 

\subsubsection{}
\label{num:codimfunction}
If $x$ is a point of $X$ and $K^{ }$ is a dualizing complex on $X$, denote by $\pi_x:\operatorname{Spec}(k(x))\to\operatorname{Spec}(\mathcal{O}_{X,x})$ the canonical map and $K^{ }_x\in D^b_c(\mathcal{Q}(\mathcal{O}_{X,x}))$ the pullback of the complex $K $ via the canonical morphism $\operatorname{Spec}(\mathcal{O}_{X,x}) \to X$. Then there is a unique integer $\mu_{K^{ }}(x)$ such that $\pi_x^!K^{ }_x\in D^b_c(\mathcal{Q}(k(x)))$ is quasi-isomorphic to a $1$-dimensional $k(x)$-vector space concentrated in degree $\mu_{K^{ }}(x)$; in addition, the association $x\mapsto\mu_{K^{ }}(x)$ defines a \emph{codimension function} on $X$ (\cite[V 3.4, 7.1]{Har}). We will use the letters $m$ and $n$ respectively to refer to the minimum and the maximum of $\mu_K$.

\subsubsection{ }
Denote $X^{(p)}: = \{ x \in X | \mu_{K} (x) = p \} $. 

\subsubsection{ }
\label{num:Kkx}
If $K$ is a minimal dualizing complex and $\mu_K(x)=p$, then we have an identification
$$ \pi_x^!K^{ }_x =  K_{k(x)} [-p] $$
where $K_{k(x)}: = \Hom_{\mathcal{O}_{X,x}}(k(x),K^p_x)$ is a one dimensional vector space over $k(x)$.\ This fact follows directly from \cite[Proposition 1.10]{Gil07} and Section \ref{num:codimfunction}. Here, note that $\pi_x^!K_x$ is a minimal dualizing complex over the field $k(x)$. 


\subsubsection{}
\label{num:codlb}
If $ L$ is an invertible $\mathcal{O}_X$-module, then $\mu_{K^{ }}$=$\mu_{K^{ }\otimes  L}$ and $(K^{ }\otimes  L)_{k(x)}\cong K_{k(x)}\otimes L_{k(x)}$. 

\subsubsection{}
\label{num:regcod}
If $X$ is a regular scheme, then there is a minimal dualizing complex $K_X$ resolving $\Oo_X$. Note that $\mu_{K_X}: X\to \Z$ is the ususal codimension function on $X$ (cf. \cite[V \S 10]{Har}). If $L$ is a line bundle on $X$, then we denote $K_L: = K_X \otimes L$ which is the minimal dualizing complex resolving $L$. 

\subsubsection{}
If $f:X\to Y$ is a quasi-projective lci morphism of relative dimension $d$ and $K$ is a dualizing complex on $Y$, then there is an isomorphism in $D^b_c(\mathcal{Q}(X))$
\begin{align}
\label{eq:lcipur}
f^!K\simeq f^*K\otimes \operatorname{det}(\tau_f)[d]
\end{align}
which is compatible with composition of quasi-projective lci morphisms.
If $f$ is a regular closed immersion, this is the \emph{fundamental local isomorphism} (\cite[III 7.3, p. 180]{Har}); if $f$ is smooth, the isomorphism is defined in \cite[III \S2]{Har}. In general, the isomorphism~\eqref{eq:lcipur} is defined by gluing these two cases, and is independent of the factorization of $f$ as a regular closed immersion followed by a smooth morphism (\cite[III \S8]{Har}).


\subsection{On real schemes}

\subsubsection{}
\label{num:rsch}
For a scheme $X$, the associated \textbf{real scheme} $X_r$ is a topological space constructed by gluing real spectra of rings (cf. \cite[0.4.2]{Sch1}).\ Its points are pairs $(x,P)$, where $x$ is a point of $X$ and $P$ is an ordering of the residue field of $x$. Forgetting the orderings on residue fields yields a canonical continuous map
\begin{align}
\operatorname{supp}:X_r\to X.
\end{align}

\subsubsection{}
\label{sec:2invert}


For any scheme $X$, the canonical inclusion $X\times_\mathbb{Z}\mathbb{Q}\to X$ is a pro-open immersion, which induces an isomorphism on the underlying real schemes $X_r=(X\times_\mathbb{Z}\mathbb{Q})_r$, that is, the underlying real scheme only depends on the characteristic $0$ fiber. This is because any field of positive characteristic cannot have an ordering. Therefore in the study of real schemes it is harmless to assume that all schemes have characteristic $0$.

\subsubsection{}
We now introduce the analog of twists \`a la Morel in~\ref{eq:twimorel} for real schemes. 
Recall that for any ordering $P$ on a field $F$, there is a group homomorphism $\sign_P\colon F^\times \rightarrow \{1, -1\}$  sending $a$ to $1$ if $a \in P$ and $a$ to $-1$ if $a \not\in P$.
	\begin{lemma}\label{lem:localpatching}
		For any Zariski open subset $U\subset X$ and regular function \(f\in\sheaf O_X^\times(U)\), the sign map \(\sign(f)\colon U_r \to \{\pm 1\}\) given by \((x,P)\mapsto \sign_P(f_x)\) is locally constant.
	\end{lemma}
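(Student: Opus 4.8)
The plan is to reduce immediately to the affine case and then read off the result from the definition of the topology on the real spectrum. Since the real scheme $X_r$ is built by gluing the real spectra $\Sper A$ of the coordinate rings of affine opens of $X$, the open subspace $U_r\subseteq X_r$ is covered by the subspaces $\Sper A=(\Spec A)_r$ with $\Spec A\subseteq U$ affine. Local constancy being a local condition, it suffices to prove the statement on each such chart; and as $f\in\mathcal{O}_X^\times(U)$ is in particular a unit locally, its restriction to $\Spec A$ is an honest unit $a\in A^\times$.

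Next I would recall that $\Sper A$ carries the topology generated by the subbasic sets $\{\,b>0\,\}:=\{\,\alpha\in\Sper A: b\notin -\alpha\,\}$, $b\in A$, where a point $\alpha$ is viewed as a prime cone with support the prime ideal $\operatorname{supp}(\alpha)=\alpha\cap(-\alpha)$; equivalently, $\{\,b>0\,\}$ is the set of $\alpha$ with $b\in\alpha\setminus\operatorname{supp}(\alpha)$. Under the identification of points with pairs $(x,P)$ — the prime being $\operatorname{supp}(\alpha)$ and $P$ the induced ordering on the residue field — the set $\{\,b>0\,\}$ becomes precisely the locus of $(x,P)$ with $x\notin V(b)$ and $\sign_P(b_x)=+1$, while $\{\,-b>0\,\}$ is the locus with $x\notin V(b)$ and $\sign_P(b_x)=-1$. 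Both are open.

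The key point is that a unit $a$ belongs to no prime ideal of $A$, hence $a\notin\operatorname{supp}(\alpha)$ for every $\alpha\in\Sper A$; thus $a$ has nonzero value at every point and, using $\alpha\cup(-\alpha)=A$, we get $\Sper A=\{\,a>0\,\}\sqcup\{\,a<0\,\}$ as a disjoint union of two open sets. On the first, $\sign(f)\equiv +1$; on the second, $\sign(f)\equiv -1$. Hence $\sign(f)$ is (locally) constant on each affine chart, and therefore locally constant on $U_r$. I do not expect a genuine obstacle: the only things requiring a little care are the translation between the ``prime cone'' and the ``$(x,P)$'' descriptions of points of the real spectrum together with the matching description of its topology, and the observation that the restriction of a section of $\mathcal{O}_X^\times$ to an affine open is a unit, so that its value is nowhere zero.
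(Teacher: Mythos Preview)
Your proof is correct and follows essentially the same route as the paper: reduce to an affine chart $\Spec A$, observe that $f$ restricts to a unit $a\in A^\times$, and use the subbasic opens of $\Sper A$ to write $\Sper A=\{a>0\}\sqcup\{-a>0\}$ as a disjoint union of opens on which $\sign(f)$ is constantly $+1$ and $-1$ respectively. Your version spells out the prime-cone description and its translation to pairs $(x,P)$ a bit more carefully than the paper does, but the argument is the same.
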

	\begin{proof}
		We may assume that $U = \Spec A$ is affine.  The global section $f\colon A \rightarrow A$ is an automorphism of \(A\), viewed as a free module over itself, so we may consider \(f\) as a unit in $A$.  Recall that a subbasis of the topology on the real spectrum $\op{sper}A$ is given by the sets $\op{D}(a):= \{ (x,P) \in \Sper A\colon a_x >_P 0 \}$ for \(a\in A\). As \(f_x\) is non-zero for any \(x\), $\Sper A$ is the disjoint union of $\op{D}(f)$ and $\op{D}(-f)$.  The value of \(\sign_P(f_x)\) is constant \(+1\) on \(\op{D}(f)\) and constant \(-1\) on \(\op{D}(-f)\).  
	\end{proof}
	
	We thus obtain a morphism of abelian sheaves \(\sheaf O_X^\times \to \support_*\{\pm 1\}\), and, by adjunction, a morphism \(\support^*(\sheaf O_X^\times)\to\{\pm 1\}\).  We can upgrade this morphism to a morphism of sheaves of rings \(\Z[\support^*(\sheaf O_X^\times)]\to\Z[\{\pm 1\}]\). This allows us to view any abelian sheaf on \(X_r\) as a \(\Z[\support^*(\sheaf O_X^\times)]\)-module.  
\begin{definition}
\label{def:realtw}
Let \(\sheaf L\) be an invertible $\mathcal{O}_X$-module, and let \(\sheaf A\) be a sheaf of abelian groups on $X_r$.
For any open constructible subset $U$ of $X_r$, we define the group
\begin{align}
C(U,\sheaf A(\sheaf L))=\sheaf A (U) \otimes_{\Z[\support^*(\Oo_X^\times)]} \Z[\support^*(\sheaf L^\times)]
\end{align}
and the \textbf{twisted sheaf} $\sheaf A(\sheaf L)$ is the sheaf of abelian groups on $X_r$ defined as the sheafification of the presheaf $U\mapsto C(U,\sheaf A(\sheaf L))$, which is a sheaf locally isomorphic to $\sheaf A$.
\end{definition}
In particular, for any invertible $\mathcal{O}_X$-module \(\sheaf L\), we denote by $\mathbb{Z}( L)$ the twisted sheaf associated to the constant sheaf $\mathbb{Z}$ on $X_r$ therefore obtained, which is a locally constant sheaf locally isomorphic to $\mathbb{Z}$. This sheaf will play an essential role in what follows.



\subsubsection{}
\label{num:jac8.6}
We denote by $\I^\infty=\operatorname{colim_j}\I^j$ (resp. $\I^\infty=\operatorname{colim_j}\I^j$) the colimit of the powers of the fundamental ideal (resp. the colimit of the Zariski sheaves of the powers of the fundamental ideal). By \cite[Appendix]{HWXZ}, for any invertible $\mathcal{O}_X$-module $ L$ on $X$, there is an isomorphism of sheaves
\begin{align}
\label{eq:jac8.6intro}
\I^\infty( L)\simeq \support_*(\mathbb{Z}( L)).
\end{align}
In the case $ L=\mathcal{O}_X$, the isomorphism~\eqref{eq:jac8.6intro} is due to Jacobson (\cite[Theorem 8.6]{jacobson}).




\section{Main results of the paper}
For a scheme $X$ with a minimal dualizing complex $K$, we recall in Section~\ref{sec:BWGcomplex} the construction of the Gersten-Witt complex. In Section~\ref{sect:Rost-Schmid-complex} we construct in a different way a Rost-Schmid complex of the form
\begin{align} 
\label{eq:Rost-Schmid-complex-Wittintro}
\bigoplus\limits_{\mu_K(x)=m} \W(k(x), K_{k(x)}) 
\to
\bigoplus\limits_{\mu_K(x)=m+1} \W(k(x), K_{k(x)})
\to\cdots\to
\bigoplus\limits_{\mu_K(x)=n} \W(k(x), K_{k(x)}).
\end{align}
The construction follows the ideas of \cite{Schmid}, using the twisted residue and transfer maps on Witt groups. When $2$ is invertible on $X$, the precomplex~\eqref{eq:Rost-Schmid-complex-Wittintro} agrees with the Gersten-Witt complex, and in particular is a complex (cf. Theorem~\ref{theorem:Gille-Schmid}). 

In Sections~\ref{sec:Icohdual} and~\ref{sec:Icohsupp} we study the restriction of the complex~\eqref{eq:Rost-Schmid-complex-Wittintro} to the powers of the fundamental ideal, as well as its cohomology with support. The main upshot is the following devissage-type result (cf.\  Theorem~\ref{thm:devissagK_Ij}):
\begin{theorem}
	\label{thm:devissagK_Ijintro}
 Let $\pi: Z \rightarrow X$ be a closed immersion and assume that $2$ is invertible on both schemes. Then there is a canonical isomorphism
\begin{align}
H^i(Z, \I^j, \pi^! K^{ }) \simeq  H^i (X \on Z, \I^j, K^{ }).
\end{align}
\end{theorem}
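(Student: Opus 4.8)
The plan is to establish the isomorphism by a local-to-global argument, building the map from a morphism of Rost--Schmid precomplexes and then checking it is an isomorphism by comparing stalks, or equivalently by comparing the $E_1$-terms of the corresponding spectral sequences.

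\medskip

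\textbf{Step 1: Compatibility of codimension functions and twisting sheaves.} First I would use the closed immersion $\pi\colon Z\to X$ together with item~(f) of Section~\ref{num:recalldualcomp}: since $\pi^!$ preserves dualizing complexes, $\pi^!K$ is a dualizing complex on $Z$. For a point $z\in Z$ with image $x=\pi(z)$, the stalk computation in item~(h) together with the functoriality of $f^!$ (composed with $\Spec k(z)\to\Spec \mathcal{O}_{Z,z}\to\Spec\mathcal{O}_{X,x}$) gives a canonical identification $\mu_{\pi^!K}(z)=\mu_K(z)$ and a canonical isomorphism of one-dimensional $k(z)$-vector spaces $(\pi^!K)_{k(z)}\simeq K_{k(z)}$ (here I use Definition~\ref{df:Kkx} and the footnote extending $f^!$ to morphisms essentially of finite type). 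This already matches up, term by term, the generators of the Rost--Schmid complex~\eqref{eq:Rost-Schmid-complex-Wittintro} for $(Z,\pi^!K)$ with those generators of the complex for $(X,K)$ that are supported on points of $Z$, and with the appropriate twist by the line bundle governing the $\I^j$-coefficients.

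\medskip

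\textbf{Step 2: Identifying the complex with support.} The cohomology with support $H^i(X\on Z,\I^j,K)$ is, by definition, computed by the subcomplex (or the relevant two-term filtration quotient) of the Rost--Schmid complex of $(X,K)$ consisting of those summands $W(k(x),\I^j,K_{k(x)})$ with $x\in Z$. I would check that the residue/transfer differentials of the ambient complex restricted to these summands coincide, under the identifications of Step~1, with the differentials of the Rost--Schmid complex of $(Z,\pi^!K)$. This is the heart of the matter: it amounts to the assertion that the boundary maps in the Gersten--Witt complex are \emph{intrinsic} to the closed subscheme, which is the standard dévissage input (as in \cite{Schmid}, \cite{BW}, \cite{Gil07}). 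Concretely, for an immediate specialization $z'\rightsquigarrow z$ inside $Z$, the second residue map computed in $\mathcal{O}_{X,x}$ and the one computed in $\mathcal{O}_{Z,z}$ agree because passing to $Z$ only removes summands indexed by points not on $Z$, whose differentials into/out of $Z$-summands vanish by the support condition; I would phrase this via the local cohomology spectral sequence attached to the filtration of $X$ by codimension of support, restricted to $Z$.

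\medskip

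\textbf{Step 3: From precomplexes to cohomology, and passing to $\I^j$.} Since $2$ is invertible on both $X$ and $Z$, Theorem~\ref{theorem:Gille-Schmid} guarantees that both precomplexes are genuine complexes agreeing with the Balmer--Walter--Gille complexes, so the term-by-term isomorphism of Steps~1--2 is an isomorphism of complexes and induces the isomorphism on cohomology $H^i(Z,\pi^!K)\simeq H^i(X\on Z,K)$. Restricting coefficients to the powers of the fundamental ideal is compatible with residues and transfers (this is exactly the content of the $\I^j$-refinement recalled in Sections~\ref{sec:Icohdual}--\ref{sec:Icohsupp}, following \cite{HWXZ}), so the same isomorphism descends to $H^i(Z,\I^j,\pi^!K)\simeq H^i(X\on Z,\I^j,K)$. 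I expect the main obstacle to be Step~2: verifying that the twisted second residue homomorphisms defined via the stalks of $K$ are genuinely compatible with $\pi^!$, i.e.\ that the canonical isomorphisms $(\pi^!K)_{k(z)}\simeq K_{k(z)}$ are compatible with the coboundary maps of the two complexes — this requires unwinding the construction of the residue maps in Section~\ref{sect:Rost-Schmid-complex} and the functoriality of the trace/residue isomorphisms for $f^!$ along a regular (or at least lci) closed immersion, and a limiting argument to handle the case where $Z$ is not regularly embedded.
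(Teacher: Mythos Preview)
Your approach is correct in outline and would yield the result, but it differs from the paper's in where the work is placed. You match the terms of the two Rost--Schmid complexes directly (Step~1), observe that the complex with support is exactly the sub-sum over points of $Z$, and then propose to verify by hand that the twisted residue/transfer differentials on $Z$ agree with those on $X$ restricted to $Z$; since $Z$ is closed, the one-dimensional local rings $\mathcal{O}_{\overline{\{z\}},z'}$ and their normalizations are literally the same whether computed in $Z$ or in $X$, so the only nontrivial point left is the coherence of the identification $(\pi^!K)_{k(z)}\simeq K_{k(z)}$ with the residue/transfer maps. The paper sidesteps this verification entirely: it works with the filtrations $D^p_Z\subset D^b_c(\mathcal{Q}(Z))$ and $D^p_{Z,X}\subset D^b_{c,Z}(\mathcal{Q}(X))$ and uses that $\pi_*$ is a duality-preserving exact functor (for the dualities $\#^{\pi^!K}$ and $\#^{K}$) respecting the filtrations. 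This produces, for free, a morphism between the associated Balmer--Walter--Gille complexes --- compatibility of differentials is built into the functoriality of the localization long exact sequence --- and the termwise isomorphism is then Gille's local devissage $\pi_*\colon W^p_{\mathfrak{m}_x}(\mathcal{O}_{Z,x},\pi^!K_x)\xrightarrow{\sim} W^p_{\mathfrak{m}_x}(\mathcal{O}_{X,x},K_x)$ (\cite{Gil07}, \cite[Lemma~4.5]{Xie20}). Passage to $\I^j$ is Lemma~\ref{lem:Icohocommutewithdiff} in both approaches. Your route is more elementary and stays on the Rost--Schmid side, but the coherence you flag at the end genuinely has to be checked; the paper's route absorbs it into the duality-preserving functor $\pi_*$, at the cost of invoking the triangulated-category machinery. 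One minor point: no regularity of the immersion is needed in either argument, so your proposed ``limiting argument'' for the non-lci case is unnecessary --- $\pi^!=R\underline{Hom}_{\mathcal{O}_X}(\mathcal{O}_Z,-)$ and the identification of stalks holds for any closed immersion.
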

Here $H^i (X, \I^j, K^{ })$ is the $i$-th hypercohomology of the sheafification of the complex~\eqref{eq:Rost-Schmid-complex-Wittintro}, where we replace everywhere Witt groups by the $j$-th power of the fundamental ideal $\I^j$, and $H^i (X \on Z, \I^j, K^{ })$ is the variant with support in $Z$.

In Sections~\ref{sec:tr} and~\ref{sec:twitr} we define the twisted residue and transfer maps on real schemes. The former is modeled over \cite[\S3]{jacobson}, and the latter arises naturally as a particular case of coherent duality. We use them in Section~\ref{sec:GerstenCXr} to define a precomplex of abelian groups of the form
\begin{align}
\label{eq:CVKintro}
\resizebox{\textwidth}{!}{$
\bigoplus\limits_{\mu_K(x)=m} C(x_r, \Z({K_{k(x)}})) 
\to
\bigoplus\limits_{\mu_K(x)=m+1} C(x_r, \Z({K_{k(x)}}))
\to\cdots\to
\bigoplus\limits_{\mu_K(x)=n} C(x_r, \Z({K_{k(x)}})).
$}
\end{align}
We show that the precomplex~\eqref{eq:CVKintro} is a complex in Theorem~\ref{theo:gersten-complex-real} by comparing it with the complex~\eqref{eq:Rost-Schmid-complex-Wittintro}. By sheafifying the complex~\eqref{eq:CVKintro} we obtain a complex of sheaves of abelian groups on $X_r$
\begin{align}
\label{eq:def_CXrKintro}
\bigoplus\limits_{\mu_K(x)=m} (i_{x_r})_* \Z({K_{k(x)}}) 
\to
\bigoplus\limits_{\mu_K(x)=m+1} (i_{x_r})_* \Z({K_{k(x)}})
\to \cdots \to
\bigoplus\limits_{\mu_K(x)=n} (i_{x_r})_* \Z({K_{k(x)}})
\end{align} 
which we denote by $\underline{G}(X_r,K^{ })$, called the \emph{Gersten complex}. 
Note that $\underline{G}(X_r, K)$ is a complex of acyclic sheaves, so its hypercohomology agree with the cohomology of the complex~\eqref{eq:CVKintro}. Furthermore we have the following result:
\begin{theorem}
If $2$ is invertible on $X$, then $H^i (X, \I^\infty, K^{ })$ agrees with the $i$-th hypercohomology group of $\underline{G}(X_r,K^{ })$.
\end{theorem}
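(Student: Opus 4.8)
The plan is to compare the hypercohomology of the complex of sheaves $\underline{C}(X_r,K^{ })$ on $X_r$ with the hypercohomology $H^i(X,\I^j,K^{ })$ of the sheafification of the Rost--Schmid--Witt complex~\eqref{eq:Rost-Schmid-complex-Wittintro}, exploiting Jacobson's isomorphism~\eqref{eq:jac8.6intro} together with the twisting identifications from \cite{HWXZ}. First I would recall that $H^i(X,\I^j,K^{ })$ is by definition the $i$-th hypercohomology of the sheafified complex obtained from~\eqref{eq:Rost-Schmid-complex-Wittintro} after replacing each Witt group term by the $j$-th power $I^j$ of the fundamental ideal and taking the colimit over $j$ to land in $\I^\infty$; concretely, term by term this sheafified complex is $\bigoplus_{\mu_K(x)=p}(i_x)_*\I^\infty(K_{k(x)})$ on $X$, and one applies the derived pushforward along $\operatorname{supp}$ (or equivalently works directly on $X_r$). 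The key point is then the term-by-term identification: by Jacobson's theorem~\eqref{eq:jac8.6intro}, for each point $x$ one has $\I^\infty_{k(x)}\simeq(\operatorname{supp}_x)_*\Z$ on $\operatorname{Spec}(k(x))$, and this is compatible with the twist by the one-dimensional vector space $K_{k(x)}$ in the sense that $\I^\infty(K_{k(x)})$ corresponds to $(\operatorname{supp}_x)_*\Z(K_{k(x)})$, where on the real-spectrum side $\Z(K_{k(x)})$ is the twisted constant sheaf as defined in~\ref{sec:conventionsch}(e). Pushing forward along $i_x:\operatorname{Spec}(k(x))\hookrightarrow X$ and using $\operatorname{supp}\circ i_x = (i_{x_r})\circ\operatorname{supp}_x$, one gets exactly the terms $(i_{x_r})_*\Z(K_{k(x)})$ appearing in~\eqref{eq:def_CXrKintro}.

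Second, and this is the crux, I would verify that these term-by-term isomorphisms assemble into an isomorphism of \emph{complexes} of sheaves on $X_r$ — i.e.\ that the differentials match up. This is where one must compare the residue/transfer maps: the differentials in the sheafified $\I^\infty$-complex are (twisted) Rost--Schmid boundary maps built from second residue homomorphisms on the powers of the fundamental ideal, while the differentials in $\underline{C}(X_r,K^{ })$ are, by construction (Sections~\ref{sec:tr} and~\ref{sec:twitr}), the twisted residue and transfer maps on real spectra. The paper has essentially arranged for this: by Theorem~\ref{theo:gersten-complex-real} the precomplex~\eqref{eq:CVKintro} is a complex precisely because it is compared with~\eqref{eq:Rost-Schmid-complex-Wittintro} under Jacobson's isomorphism, and that same comparison — which identifies $C(k(x)_r,\Z(K_{k(x)}))$ with (a colimit related to) $I^j(k(x),K_{k(x)})$ — is compatible with the residue maps by the modelling over \cite[\S3]{jacobson}. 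So I would cite that compatibility and upgrade it from global sections to sheaves: sheafification is exact, so an isomorphism of precomplexes of presheaves that is compatible with differentials sheafifies to an isomorphism of complexes of sheaves.

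Third, once we have a quasi-isomorphism (in fact isomorphism) of complexes of sheaves $\operatorname{supp}_*\!\left(\text{sheafified }\I^j\text{-complex on }X\right)\simeq\underline{C}(X_r,K^{ })$ on $X_r$, hypercohomology is a derived functor and hence respects quasi-isomorphisms, giving $H^i(X,\I^j,K^{ })\cong\mathbb{H}^i(X_r,\underline{C}(X_r,K^{ }))$. A small point to be careful about: the hypercohomology $H^i(X,\I^j,K^{ })$ is defined on $X$ (or via $\operatorname{supp}_*$), so one needs that $R\operatorname{supp}_*$ applied to each term $(i_x)_*\I^\infty(K_{k(x)})$ agrees with $(i_{x_r})_*\Z(K_{k(x)})$ on the nose — this follows from Jacobson's isomorphism being an isomorphism of \emph{sheaves} (not just on global sections), so the derived pushforward introduces no higher terms, combined with the fact that $i_x$ and $i_{x_r}$ are closed immersions (so pushforward is exact and the base-change square $\operatorname{supp}\circ i_x=i_{x_r}\circ\operatorname{supp}_x$ causes no trouble).

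The main obstacle I anticipate is the second step: making the identification of differentials genuinely canonical and checking it is compatible with \emph{both} the residue maps (at codimension-one specializations within the residue field of a point) and the transfer maps (arising from finite field extensions / the coherent-duality structure on $K_{k(x)}$), including the twisting data. The twisting bookkeeping — tracking how the canonical isomorphisms $K_{k(x)}\cong K_{k(y)}\otimes(\text{something})$ attached to a specialization $y\rightsquigarrow x$ interact with the sign maps $\operatorname{sign}_P$ of~\ref{sec:conventionsch}(e) — is exactly the kind of delicate sign-chasing that \cite{HWXZ} and \cite{jacobson} handle, and the proof will largely consist of invoking and assembling those compatibilities (Theorem~\ref{thm:devissagK_Ijintro}, Theorem~\ref{theo:gersten-complex-real}) rather than re-deriving them. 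Everything else — exactness of sheafification, the derived-functor formalism for hypercohomology, the behavior of closed-immersion pushforwards — is routine.
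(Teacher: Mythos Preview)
Your approach is essentially the same as the paper's: identify the sheafified $\I^\infty$-complex on $X$ with $\operatorname{supp}_*\underline{C}(X_r,K)$ term by term via Jacobson's (Arason--Knebusch) isomorphism, check the differentials agree by invoking the comparison established in the proof of Theorem~\ref{theo:gersten-complex-real}, and then pass from Zariski hypercohomology to hypercohomology on $X_r$. The paper compresses all of this into three sentences, citing the proof of Theorem~\ref{theo:gersten-complex-real} for the isomorphism of complexes and \cite[Lemma 4.6]{jacobson} for the hypercohomology comparison.

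One point where your justification is slightly off: in your third step you argue that ``$R\operatorname{supp}_*$ introduces no higher terms'' because Jacobson's isomorphism is an isomorphism of sheaves. That is not the right reason, and the direction is reversed (you apply $R\operatorname{supp}_*$ to the term $(i_x)_*\I^\infty(K_{k(x)})$, which already lives on $X$, not on $X_r$). What you actually need is that the functor $\operatorname{supp}_*\colon \mathrm{Sh}(X_r)\to\mathrm{Sh}(X_{\mathrm{Zar}})$ is \emph{exact}, which is Scheiderer's theorem \cite[Theorem~19.2]{Sch}; from this one gets $R\Gamma_{\mathrm{Zar}}(X,\operatorname{supp}_*(-))\simeq R\Gamma(X_r,-)$, and hence the Zariski hypercohomology of $\operatorname{supp}_*\underline{C}(X_r,K)$ agrees with the $X_r$-hypercohomology of $\underline{C}(X_r,K)$. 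Once you insert this, your argument is complete and matches the paper's.
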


In Sections~\ref{sec:realduality} and~\ref{sec:GWKcomplex} we justify the fundamental properties of the complex $\underline{G}(X_r, K)$. We start by building some refinements on the six functors on real schemes (cf.\  Theorems~\ref{th:poincare} and~\ref{thm:abspur}):
\begin{theorem}
\label{th:relabspur}
\begin{enumerate}
\item (Poincar\'e duality)
If $f:X\to Y$ is a smooth morphism of relative dimension $d$ with tangent bundle $T_f$, there is an 
isomorphism of functors
\begin{align}
f^*_r(-)\overset{}{\otimes}\mathbb{Z}(\operatorname{det}(T_f))[d]\to f^!_r(-).
\end{align}
\item (absolute purity)
Let $i:Z\to X$ be a closed immersion between regular schemes of pure codimension $c$, and denote by $N$ the normal bundle of $i$. Then for any locally constant constructible sheaf $\mathcal{F}$ on $X_r$ there is an 
isomorphism in $D(Z_r)$
\begin{align}
\label{eq:abspur}
i^*_r\mathcal{F}\overset{}{\otimes}\mathbb{Z}(\operatorname{det}(N)^{-1})[-c]\simeq i^!_r\mathcal{F}.
\end{align}
\end{enumerate}
\end{theorem}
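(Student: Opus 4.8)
\textbf{Proof plan for Theorem~\ref{th:relabspur}.}

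The strategy is to deduce both statements from the analogous results in coherent duality (D1) together with the comparison between $\underline{C}(X_r,K)$ and coherent dualizing complexes, exactly as in the philosophy of the table in the introduction. For part (1), the plan is to use the fact that $f^!_r$ can be computed by the formula $f^!_r(-) \simeq f^*_r(-) \otimes f^!_r \mathbb{Z}_{Y_r}$ for the relevant (locally constant constructible) coefficients, so that it suffices to identify $f^!_r \mathbb{Z}_{Y_r}$ with $\mathbb{Z}(\det(T_f))[d]$. On the coherent side, for $f$ smooth of relative dimension $d$ the relative dualizing complex is $\omega_f = \det(\Omega_f)[d] = \det(T_f)^{-1}[d]$, and $f^! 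K \simeq f^* K \otimes \omega_f$. One then transports this identity through the comparison of $\underline{C}((-)_r, -)$ with the coherent dualizing complex: the key point is that for a smooth morphism the codimension function shifts by $d$, the residue fields and their $1$-dimensional twists $K_{k(x)}$ transform by a $\det(\Omega_f)$-twist under pullback along $f$, and the sign homomorphism $\mathcal{O}_X^\times \to \support_*\{\pm1\}$ converts the coherent line-bundle twist $\det(\Omega_f)^{-1}$ into the real twist $\mathbb{Z}(\det(T_f))$. Since $\underline{C}(X_r, f^!K)$ computes $f^!_r$ applied to the dualizing object attached to $K$ (by the compatibility of $\underline{C}$ with $f^!$ promised in the introduction), comparing $\underline{C}(X_r, f^!K)$ with $\underline{C}(X_r, f^*K) \otimes \mathbb{Z}(\det(T_f))[d]$ term by term on the Rost--Schmid complexes gives the canonical isomorphism; naturality in the coefficients is then checked by noting every map in sight is induced from the corresponding coherent map.

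For part (2), the plan is to combine part (1) with the d\'evissage result Theorem~\ref{thm:devissagK_Ijintro} (or rather its sheaf-level incarnation underlying $\underline{C}$), in the standard way absolute purity is derived from relative purity in the six-functor formalisms. Concretely, for $i: Z \to X$ a closed immersion of regular schemes of codimension $c$ with normal bundle $N$, one has on the coherent side the absolute purity isomorphism $i^! \mathcal{O}_X \simeq \det(N)[-c]$ (equivalently $i^!\omega_X \simeq \omega_Z$ since both $X$ and $Z$ are regular hence Gorenstein, so $\mathcal{O}_X$, $\mathcal{O}_Z$ are dualizing), and $i^! K \simeq i^*K \otimes \det(N)[-c]$ for any dualizing $K$. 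Transporting through the comparison with real schemes, the coherent line bundle $\det(N)$ corresponds under the sign map to the real twist $\mathbb{Z}(\det(N)^{-1})$ — the inversion arising because the twist of $K_{k(x)}$ involves the dual of the relevant determinant, matching the opposite variance in~\eqref{eq:abspur} versus the Poincar\'e formula in (1). For a general locally constant constructible $\mathcal{F}$ one reduces to the constant sheaf case by the projection-type formula $i^!_r \mathcal{F} \simeq i^*_r \mathcal{F} \otimes i^!_r \mathbb{Z}_{X_r}$, which holds for such $\mathcal{F}$ because locally $\mathcal{F}$ is a finite sum of constant sheaves twisted by line bundles, and both sides commute with such operations.

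The main obstacle I expect is the careful bookkeeping of \emph{twists and signs}: one must track, through the chain ``coherent dualizing complex $\rightsquigarrow$ $K_{k(x)}$ $\rightsquigarrow$ $\mathbb{Z}(K_{k(x)})$ on $k(x)_r$ $\rightsquigarrow$ sheaf $\underline{C}(X_r, K)$'', exactly how a twist by a coherent line bundle $L$ on $X$ becomes a twist by $\mathbb{Z}(\support^* L^{\pm 1})$ on $X_r$, and in particular get the sign of the exponent right in both~\eqref{eq:abspur} and the Poincar\'e formula (the tangent bundle appearing in (1) versus the inverse normal bundle in (2)). This hinges on the precise definition of the twisted residue and transfer maps from Sections~\ref{sec:tr}--\ref{sec:twitr} and on the naturality of Definition~\ref{df:Kkx} under smooth pullback and closed immersion; once those compatibilities are pinned down, the rest is a formal transport of the coherent identities $f^!K \simeq f^*K\otimes\omega_f$ and $i^!K \simeq i^*K\otimes\det(N)[-c]$ along a comparison functor that is an equivalence on the relevant subcategories.
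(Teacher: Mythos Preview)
Your plan has a genuine circularity problem. You propose to deduce Poincar\'e duality and absolute purity from the comparison between $\underline{C}(X_r,K)$ and coherent dualizing complexes, in particular from the compatibility $\underline{C}(X_r,f^!K)\simeq f_r^!\underline{C}(Y_r,K)$ announced in the introduction. But in the paper's logical structure this compatibility (Theorem~\ref{th:fupper!compat}) is proved \emph{using} Poincar\'e duality: in Case~II of that proof, the very first step is $f_r^!\underline{C}(Y_r,K)\simeq f_r^*\underline{C}(Y_r,K)\otimes\mathbb{Z}(\det(T_f))[d]$, which is precisely Theorem~\ref{th:poincare}. Likewise, your route to $f_r^!\mathbb{Z}$ would need the identification $\mathbb{Z}\simeq\underline{C}(Y_r,\mathcal{O}_Y)$ for regular $Y$ (Corollary~\ref{cor:sch2.3}), but that corollary is obtained from the coniveau spectral sequence whose $E_1$-terms are computed via absolute purity. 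So both directions of your transport argument presuppose the theorem you are trying to prove. There is also a scope mismatch: Poincar\'e duality is asserted as an isomorphism of \emph{functors} on all of $D(Y_r)$ for an arbitrary smooth $f$, whereas the coherent comparison only yields information about the single object $\underline{C}(Y_r,K)$, and only when $Y$ carries a dualizing complex.

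The paper's actual argument is internal to the six-functors formalism on $D(X_r)$ and does not touch $\underline{C}$ at all. For part~(1), relative purity (an abstract consequence of the formalism, \cite[Theorem 2.4.50]{CD}) already gives $f_r^!(-)\simeq Th(T_f)\otimes f_r^*(-)$; the entire content is then the identification $Th(V)\simeq\mathbb{Z}(\det V)[d]$ of Lemma~\ref{lm:Thdet}, proved by gluing local trivializations and invoking the $SL$-orientation of $D(X_r)$. For part~(2), one first reduces to $\mathcal{F}=\mathbb{Z}$ via the dualizability formula $i_r^!\mathcal{F}\simeq i_r^*\mathcal{F}\otimes i_r^!\mathbb{Z}$ (\cite[Proposition 5.4]{FHM}), then uses the purity transformation $i_r^*(-)\otimes Th(N)^{-1}\to i_r^!(-)$ coming from deformation to the normal cone (\cite[4.3.1]{DJK}), and finally checks it is an isomorphism by Popescu's theorem, writing the regular pair $(X,Z)$ as a limit of smooth pairs over $\mathbb{Q}$ and applying part~(1). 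The Witt-theoretic d\'evissage you invoke plays no role here; it enters later, in the closed-immersion case of Theorem~\ref{th:fupper!compat}.
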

For a more detailed discussion about duality and purity results in the motivic context, see \cite{DJK} and \cite{DFJK}. 
Now Theorem~\ref{th:relabspur} allows us to prove the main results of the paper (cf.\ Corollary~\ref{cor:sch2.3}, Theorem \ref{th:fupper!compat} and Theorem~\ref{th:CXKdual}):
\begin{theorem}\label{thm:main-theorem-D(X_r)}
\begin{enumerate}
\item
\label{num:sch2.3intro}
If $X$ is a regular excellent scheme with a dualizing complex $K$ and $ L$ is an invertible $\mathcal{O}_X$-module, then there is a canonical isomorphism $\mathbb{Z}( L)\simeq\underline{G}(X_r, L)$ in $D(X_r)$. 
\item 
\label{num:fupper!compatintro}
If $f:X\to Y$ is a quasi-projective morphism between excellent schemes and $K$ is a dualizing complex on $Y$, there is an isomorphism 
\begin{align}
\underline{G}(X_r,f^!K^{ })\simeq f_r^!\underline{G}(Y_r,K^{ })
\end{align}
which is compatible with compositions in the sense of~\eqref{eq:gammacomp}.
\item 
\label{num:th:CXKdualintro}
If $X$ is an excellent scheme with a dualizing complex $K$, then the complex $\underline{G}(X_r,K^{ })$ is a \emph{dualizing object} of $D(X_r)$ (see Definition~\ref{def:dualizing}).
\end{enumerate}
\end{theorem}
Theorem \ref{thm:main-theorem-D(X_r)} (1) is a generalization of \cite[Corollary 2.3]{Sch1}. The proof of Theorem \ref{thm:main-theorem-D(X_r)} (2) uses Theorem \ref{thm:main-theorem-D(X_r)} (1), Theorem~\ref{thm:devissagK_Ijintro} and Poincar\'e duality. We deduce Theorem \ref{thm:main-theorem-D(X_r)} (3) from Theorem \ref{thm:main-theorem-D(X_r)} (1) and (2) using some techniques with six functors. 

In Section~\ref{sec:realkun} we discuss some relations with the semialgebraic Borel-Moore homology over a real closed field (\cite[III \S2]{Del}). In particular, over the field of real numbers we recover the classical Borel-Moore homology in topology (\cite{BM}, see also \cite{Ver}, \cite[III]{Bre} and \cite[IX]{Iver}).

\section{Borel-Moore Devissage}
\label{sec:BMdev}

\subsection{The Gersten-Witt complex}
\label{sec:BWGcomplex}
\subsubsection{}
Throughout Section~\ref{sec:BWGcomplex}, we assume that $2$ is invertible in all  schemes. Our main references are \cite{BW} and \cite{Gil07}.
\subsubsection{}
Let $X$ be a scheme with a minimal dualizing complex $K$. We consider the triangulated category with duality 
\begin{align} 
(D^b_c(\Q(X)), \#^{K^{}}, \can^{K^{}}, \lambda^{K^{}})
\end{align}
in the sense of Schlichting \cite[Section 3]{Sch17}. For the translation of Balmer's $\delta$-exact duality and Schlichting's duality, we refer to  \cite[Section 2]{Xie19}. The codimension function $\mu_K: X \rightarrow \Z $ associated to $K^{ }$ induces a filtration on the derived category  $D^b_c(\Q(X))$ as
\begin{align} 
D^b_c(\Q(X)) = D^m \supseteq D^{m+1}  \supseteq \cdots \supseteq  D^n \supseteq (0)   
\end{align}
where $D^p:= \big\{ M \in  D^b_c(\Q(X)) : \mu_K (x) \geq  p \textnormal{ for all } x \in \support M   \big\} $.  Since $D^{p+1} \subseteq D^p$ is a saturated subcategory, we have an exact sequence of triangulated category with duality
\begin{align}
D^{p+1} \to D^p \xrightarrow{q} D^p/ D^{p+1}
\end{align}
which induces a long exact sequence of Witt groups
\begin{align} 
\cdots  \to 
\W^i(D^{p+1}, K^{ })
\to
\W^i(D^{p}, K^{ })    
\xrightarrow{q_*}
\W^i(D^p/ D^{p+1}, K^{ })
\xrightarrow{\partial} 
\W^{i+1}(D^{p+1}, K^{ }) 
\to\cdots.
\end{align}
Now, we get a complex
\begin{align} 
\W^{m}(D^m/D^{m+1}, K^{ }) 
\xrightarrow{\partial^q} 
\W^{m+1}(D^{m+1}/D^{m+2}, K^{ })
\xrightarrow{\partial^q}
\cdots 
\xrightarrow{\partial^q} 
\W^{n}(D^n, K^{ }).
\end{align}
The localization functor (which is an equivalence of triangulated categories)
\begin{align}
 D^p/ D^{p+1} 
 \longrightarrow 
 \coprod_{\mu_K(x)=p} D^b_{c,\mathfrak{m}_x}(\Q(\Oo_{X,x})) 
\end{align}
induces an isomorphism of Witt groups
\begin{align}
 \W^i(D^p/ D^{p+1}, K^{ }) 
 \simeq 
 \bigoplus\limits_{\mu_K(x)=p} \W^i_{\mathfrak{m}_x}(\Oo_{X,x}, K^{ }_x). 
\end{align}
For $x\in X$, denote by $\pi_x: \operatorname{Spec}(k(x))\rightarrow\operatorname{Spec}(\Oo_{X,x})$ the canonical map. By \cite[Lemma 4.3]{Xie20}, for any $i$ there is a devissage isomorphism 
\begin{align}
\label{eq:devisowitt}
 \pi_{x,*}:  \W^{i}(k(x), \pi_x^! K^{ }_x ) \simeq \W^i_{\mathfrak{m}_x}(\Oo_{X,x}, K^{ }_x) .
\end{align}
Putting all these together we get a complex 
\begin{align}
\label{eq:GWfirst}
\resizebox{\textwidth}{!}{$
\bigoplus\limits_{\mu_K(x)=m} \W^{m}(k(x), \pi_x^! K^{ }_x)
  \xrightarrow{\partial^q}
  \bigoplus\limits_{\mu_K(x)=m+1} \W^{m+1}(k(x), \pi_x^! K^{ }_x)   
  \to \cdots \to
  \bigoplus\limits_{\mu_K(x)=n} \W^{n}(k(x), \pi_x^! K^{ }_x),
$}
\end{align}
which can be rewritten as 
\begin{align} 
\label{eq:Gille-complex}
\bigoplus\limits_{\mu_K(x)=m} \W(k(x), K_{k(x)}) 
\xrightarrow{\partial^q} 
\bigoplus\limits_{\mu_K(x)=m+1} \W(k(x), K_{k(x)}) 
\to \cdots \to
\bigoplus\limits_{\mu_K(x)=n} \W(k(x), K_{k(x)})
\end{align}
by Section \ref{num:Kkx}. 

\subsection{The Rost-Schmid complex}\label{sect:Rost-Schmid-complex}
\subsubsection{}
Let $X$ be a scheme and let $K^{ }$ be a minimal dualizing complex on $X$. We construct the Rost--Schmid precomplex 
\begin{align} 
\label{eq:Rost-Schmid-complex-Witt}
\bigoplus\limits_{\mu_K(x)=m} \W(k(x), K_{k(x)}) 
\xrightarrow{\partial}
\bigoplus\limits_{\mu_K(x)=m+1} \W(k(x), K_{k(x)})
\to\cdots\to
\bigoplus\limits_{\mu_K(x)=n} \W(k(x), K_{k(x)})
\end{align}
in a different way from the complex~\eqref{eq:Gille-complex} by following the method of \cite{Schmid}. This complex shall be denoted by $C(X,W,K)$. Eventually in Theorem~\ref{theorem:Gille-Schmid} we show that the two constructions agree when $2$ is invertible on $X$. 

\subsubsection{}
 If $A$ is a one dimensional local domain with field of fractions $F$, we denote $\tilde{A}$ be its normalization. $\tilde{A}$ contains finitely many maximal ideals. Choose one maximal ideal $\tilde{\mathfrak{m}}$ of $\tilde{A}$ . Now, $\tilde{A}_{\tilde{\mathfrak{m}}}$ is a discrete valuation ring with field of fractions $F$ with residue field $k(\tilde{\mathfrak{m}})$, and $\tilde{A}_{\tilde{\mathfrak{m}}}$ is a finitely generated $A$ module. Thus, we have a finite morphism $\tilde{f}:\mathrm{Spec}(\tilde{A}_{\tilde{\mathfrak{m}}}) \rightarrow \mathrm{Spec}(A)$, and we form a commutative diagram
\begin{align}
\begin{split}
 \xymatrix{\mathrm{Spec}(\tilde{A}_{\tilde{\mathfrak{m}}}) \ar[r]^-{\tilde{f}} &  \mathrm{Spec}(A)   \\
	\mathrm{Spec}(k(\tilde{\mathfrak{m}} ) ) \ar[u]^-{\tilde{\pi}}  \ar[r]^g &  \mathrm{Spec}(k(\mathfrak{m})) \ar[u]^-{\pi} }
\end{split}
\end{align}
 
The Rost--Schmid residue map
\begin{align}
 \delta\colon \W(F) \rightarrow \W(\kmmt, (\mmt / \mmt^2)^* )
\end{align}
is defined by sending
a form $\langle a \rangle$ on $F$ to $ \delta_2^{\pi_{\tilde{y}}}(\langle a \rangle) \otimes \pi^*$
where $\delta_2$ is Milnor's second residue (cf. \cite[Lemma 1.2]{MH73}) and $\pi^*$ is the dual basis of the basis $\pi$ in the free rank one $\kmmt$ vector space  $\mmt / \mmt^2$. 

If $K^{ }$ is a minimal dualizing complex on $\mathrm{Spec}(A)$, then  $\tilde{f}^! K^{ }$ is a minimal dualizing complex on $\mathrm{Spec}(\tilde{A}_{\tilde{\mathfrak{m}}})$, cf. \cite[Sublemma 6.11]{Gil07}.\ Let $K_{\kmmt} : = (\tilde{f}^!K)_{\kmmt}$ and let $\Omega$ be the kernel of the first differential of $\tilde{f}^! K^{ }$(which is a dualizing module).  
We have a twisted residue map
\begin{align}
 	\delta: \W(F, K_F) \rightarrow \W(k(\tilde{\mathfrak{m}}),  K_{\kmmt} )      
\end{align}
induced by the Rost-schmid residue map, this is because we have isomorphisms $K_F \simeq \Omega_F$ and
\begin{align}
 K_{\kmmt} 
 \overset{\eqref{eq:lcipur}}{\simeq}
 \Omega \otimes_{\tilde{A}_{\mmt}} (\mmt/\mmt^2)^* 
 \simeq 
 \Omega_{\kmmt} \otimes_{\kmmt} (\mmt/\mmt^2)^*.
\end{align}
Note that $k(\mathfrak{m}) \subset k(\tilde{\mathfrak{m}})$ is a finite field extension, and there are canonical isomorphisms 
\begin{align}
K_{\kmmt} = (\tilde{f}^!K)_{\kmmt} \simeq g^! K_{\kmm}, 
\end{align}
and we define the twisted transfer as
\begin{align} 
\label{eq:trWitt}
tr\colon \W(\kmmt,  K_{\kmmt}   ) \simeq \W(\kmmt, g^! K_{\kmm}  ) \stackrel{g_*}\rightarrow \W(\kmm, K_{\kmm} ).
\end{align}

\begin{remark}
	The transfer map~\eqref{eq:trWitt} is well-defined even if $2$ is not invertible. Assume $F$ is any field, and $L$ is a finite field extension of $F$. Let $g: \Spec (L) \rightarrow \Spec (F)$ be the canonical map, and let $H$ be a rank one free module over $F$. Then, $g^!H : = \Hom_F(L,H)$ is a rank one free module over $L$. The transfer is defined to be $g_*: \W(L,g^!H) \rightarrow \W(F,H)$ by sending a symmetric space $\varphi : V \rightarrow \Hom_L(V, g^!H)$ to $ e \circ \varphi : V \rightarrow \Hom_F(V,H) $ where the map $e: \Hom_F(V_F,g^!H)  \rightarrow \Hom_F(V,H)$ is the ``evaluation at $1$'' map defined by sending $  f   $ to $ v \mapsto f(v) (1_L)$. 
\end{remark}

\subsubsection{}
\label{num:wittres}
Now we define the differential in the precomplex~\eqref{eq:Rost-Schmid-complex-Witt}
\begin{align}
 \partial\colon \bigoplus\limits_{\mu_K(x)=i} \W(k(x), K_{k(x)}) \rightarrow \bigoplus\limits_{\mu_K(y)=i+1} \W(k(y), K_{k(y)})
\end{align}
as the composition
\begin{align}
 \W(k(x), K_{k(x)}) \stackrel{\oplus \delta}\longrightarrow \bigoplus\limits_{ \tilde{y}} \W(k(\tilde{y}), K_{k(\tilde{y})}) \stackrel{\Sigma\textnormal{tr}}\longrightarrow  \W(k(y),K_{k(y)})  
\end{align}
where the sum is taken over all points $\tilde{y}$ that dominate y and
live inside the normalization $\widetilde{\overline{\{x\}}}$ of $\overline{\{x\}}$
in its residue field $k(x)$. Note that $\mathcal{O}_{\widetilde{\overline{\{x\}}}, \tilde{y}}$ is a discrete valuation ring dominating the one dimensional local domain $\mathcal{O}_{\overline{\{x\}}, y} $ in its field of fractions which is isomorphic to $k(x)$.

\begin{theorem}\label{theorem:Gille-Schmid} Assume that $2$ is invertible on $X$. 
	Then the Rost--Schmid precomplex \eqref{eq:Rost-Schmid-complex-Witt} coincides with the Gersten-Witt complex \eqref{eq:Gille-complex}. In particular, the precomplex \eqref{eq:Rost-Schmid-complex-Witt} is a complex.
\end{theorem}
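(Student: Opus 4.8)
The plan is to prove that the two residue homomorphisms agree locally, since both complexes have the same terms $\bigoplus_{\mu_K(x)=p} W(k(x),K_{k(x)})$ and the differentials are built from maps indexed by pairs of points $x,y$ with $\overline{\{y\}}\subset\overline{\{x\}}$ of codimension one apart. Concretely, the claim reduces to the following statement: for a point $x$ of codimension $p$ and a point $y$ of codimension $p+1$ in its closure, the $(x,y)$-component of $\partial^q$ in the Balmer--Walter--Gille complex equals the $(x,y)$-component of $\partial^{RS}$, namely $\Sigma_{\tilde y}\,\mathrm{tr}\circ\delta$ summed over the points $\tilde y$ of the normalization lying over $y$. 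Both sides only depend on the local situation of the one-dimensional local domain $A=\mathcal O_{\overline{\{x\}},y}$ with fraction field $k(x)$ and its normalization, so one may assume $X=\Spec(A)$ with $A$ a one-dimensional local domain, $K$ a minimal dualizing complex on $A$, and we must identify the boundary map $W(k(x),K_{k(x)})\to W(k(y),K_{k(y)})$ coming from the localization long exact sequence with the Rost--Schmid boundary.

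First I would reduce further to the normal case. The key input is that devissage (the isomorphism~\eqref{eq:devisowitt}, from \cite[Lemma 4.3]{Xie20}) is compatible with finite pushforward: given the finite map $\tilde f:\Spec(\tilde A_{\mmt})\to\Spec(A)$, the square relating $\pi_{x,*}$ on $\Spec(A)$ and on $\Spec(\tilde A_{\mmt})$ commutes with the transfer $g_*$ on residue fields and with the pushforward along $\tilde f$ on the categories with support-conditions. This is essentially the functoriality of devissage under finite morphisms, which I would either cite or extract from \cite{Xie20}, \cite{Gil07}. Granting that, the computation of the Balmer--Walter--Gille boundary for $A$ is pulled back to the same computation for the discrete valuation ring $\tilde A_{\mmt}$, where $\tilde f^!K$ is again a minimal dualizing complex (as already noted in the excerpt). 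So it suffices to treat the case where $A$ itself is a discrete valuation ring.

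For a discrete valuation ring $A$ with uniformizer $\pi$, residue field $\km$ and fraction field $F$, the Balmer--Walter--Gille boundary $W^p(D^p/D^{p+1},K)\to W^{p+1}(D^{p+1},K)$ followed by the devissage isomorphism is, by construction, the connecting map in the localization sequence of triangulated categories with duality, and its identification with Milnor's second residue $\delta_2$ twisted by the canonical basis $\pi^*$ of $(\mmt/\mmt^2)^*$ is precisely the content of the explicit computations in \cite{BW} (the second residue) together with the translation into the $\pi^!K$-twisted language via the fundamental local isomorphism \cite[III 7.3]{Har}. Here I would use the compatibility of $K_{\km}\simeq\Omega\otimes(\mathfrak m/\mathfrak m^2)^*$ established earlier in this subsection to match the coherent-duality twist with the Rost--Schmid twist, so that the boundary becomes exactly $\delta$. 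The main obstacle I anticipate is bookkeeping of the twists and signs: making sure that the identification $\pi_x^!K_x\simeq K_{k(x)}[-p]$, the fundamental local isomorphism, and the choice of dual basis $\pi^*$ are all compatible on the nose (not merely up to a unit or a sign), so that the two boundary maps literally coincide rather than differing by an automorphism of the one-dimensional vector space $K_{k(y)}$. Once the residue maps are matched, the transfers on both sides are the coherent-duality pushforward $g_*$ by definition, so the two differentials agree, and since the Balmer--Walter--Gille sequence is a complex (consecutive connecting maps compose to zero in a localization tower), so is the Rost--Schmid precomplex.
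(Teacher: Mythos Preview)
Your proposal is correct and follows essentially the same route as the paper: reduce to the case of a one-dimensional local domain by localizing at the pair $(x,y)$, and then identify the two boundary maps there. The paper's own proof is in fact terser than yours---it simply observes that the one-dimensional local reduction suffices and then cites \cite[Proposition~6.10]{Gil07} for that case---whereas you have sketched out the content of Gille's argument (passage to the normalization via compatibility of devissage with finite pushforward, then the DVR computation matching the localization boundary with Milnor's second residue through the fundamental local isomorphism). Your anticipated obstacle about sign and twist bookkeeping is real but is exactly what Gille's Proposition~6.10 handles, so citing it (as the paper does) is the cleanest resolution.
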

\begin{proof}
 It is enough to check the one dimensional local domain case (cf.\ \cite[Proof of Lemma 7.2]{Gil07}). The proof in the local case can be extracted from \cite[Propostion 6.10]{Gil07}.
\end{proof}
  
\begin{remark}\label{rmk:gerstenconj}
\begin{enumerate}
\item In~\cite[\S3]{Plowman}, Plowman gave a construction similar to the precomplex \eqref{eq:Rost-Schmid-complex-Witt} via a Rost-Schmid type construction with residue complexes. Our construction seems to be comparable to his by translating dualizing complexes to residue complexes, but such a comparison is beyond the scope of this paper. Plowman has also informed us that he is able to show that his precomplex is a complex even if $2$ is not invertible.
\item The Gersten conjecture for Witt groups says that if $X$ is the spectrum of a regular local ring, the precomplex $C(X,W,K)$ in~\eqref{eq:Rost-Schmid-complex-Witt} is an acyclic complex. When $X$ has equal characteristic different from $2$, this conjecture is proved in \cite{BGPW}. In mixed characteristic, the conjecture is proved by Jacobson (\cite[Theorem 3.8]{jacobson2}) for any unramified regular local ring with $2$ invertible. 
For the two inverted Witt groups $\W(X)[{\frac{1}{2}}]$, this conjecture is proved by Jacobson (\cite[Theorem 5.3]{jacobson}) for any excellent regular local ring, and the excellence condition is later removed in \cite[Corollary E.4]{DFJK}. In fact, one can see that $C(X,W,K)[\frac{1}{2}] \simeq C(X_\QQ,W,K_{\QQ})[\frac{1}{2}]$, since Witt groups of a field with finite characteristic can only have $2$-primary torsions; therefore the Gersten conjecture for $C(X,W,K)[\frac{1}{2}]$ (trivially) follows from the characteristic $0$ case. 
\end{enumerate}
  \end{remark}

\subsection{$\I$-cohomology with dualizing complex} 
\label{sec:Icohdual}
\subsubsection{}
Throughout Section~\ref{sec:Icohdual}, we assume that $2$ is invertible in all schemes.
\begin{definition}
	If $k$ is a field and $L$ is a one-dimensional $k$-vector space, we define the \textbf{fundamental module} $\I^j(k,H) \subset \W(k,H)$ as follows: consider $\W(k,H)$ as a $\W(k)$-module, and $\I^j(k, H) $ is defined to be the submodule of $\I^j(k) \cdot \W(k,H) \subset \W(k,H)$ where $\I^j(k) \subset \W(k)$ is the $j$-th fundamental ideal. 
\end{definition}
\begin{lemma}\label{lem:Icohocommutewithdiff} For $x \in X$ with $\mu_I(x) = p$, and $y \in X$ with $\mu_I(y) = p+1$ such that $y$ lies in the closure of $x$. Then we have
\begin{align}
\partial^q_p(\I^j(k(x), K_{k(x)})) \subseteq \I^{j-1}(k(y), K_{k(y)} ).
\end{align}
\end{lemma}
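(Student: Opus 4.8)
The plan is to reduce the statement to the one-dimensional local case, exactly as in the proof of Theorem~\ref{theorem:Gille-Schmid}, and then to verify that each of the two constituent maps of the differential --- the second residue homomorphism $\delta_2$ and the finite transfer $tr$ --- respects the filtration by powers of the fundamental ideal, dropping the index by exactly one for $\delta_2$ and preserving it for $tr$. Concretely, since $\partial^q$ agrees with $\partial^{RS}$ (by Theorem~\ref{theorem:Gille-Schmid}) and $\partial^{RS}$ between the components at $x$ and $y$ factors as $\sum_{\tilde y} tr_{\tilde y}\circ\delta_{\tilde y}$ over the points $\tilde y$ of the normalization of $\overline{\{x\}}$ lying over $y$, it suffices to show: (i) for a discrete valuation ring with fraction field $F$ and residue field $\kappa$, the twisted Rost--Schmid residue $\delta\colon W(F,K_F)\to W(\kappa,K_\kappa)$ sends $I^j(F,K_F)$ into $I^{j-1}(\kappa,K_\kappa)$; and (ii) for a finite extension $\kappa\subset\kappa'$ with $g\colon\Spec\kappa'\to\Spec\kappa$, the transfer $g_*\colon W(\kappa',g^!H)\to W(\kappa,H)$ sends $I^j(\kappa',g^!H)$ into $I^j(\kappa,H)$.

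For step (i), after trivializing the twist $K_F\simeq\Omega_F$ and choosing a uniformizer $\pi$ (which identifies $K_\kappa\simeq\Omega_\kappa$), the twisted residue becomes Milnor's second residue $\delta_2$ up to the fixed isomorphism. So the content is the classical fact that $\delta_2\colon W(F)\to W(\kappa)$ carries $I^j(F)$ into $I^{j-1}(\kappa)$; this is standard (see e.g. \cite{MH73}, and it is the defining property that makes the Gersten--Witt complex restrict to a Gersten complex for the powers of the fundamental ideal). One then transports this along the $W(\kappa)$-module structure: an element of $I^j(F,K_F)$ is a sum of products $\langle u\rangle\cdot\varphi$ with $\langle u\rangle\in I^j(F)$ and $\varphi\in W(F,K_F)$; writing $u = u_0\pi^{\varepsilon}$ with $u_0$ a unit, the residue of such a product lands in $I^{j-1}(\kappa)\cdot W(\kappa,K_\kappa)$ by the projection-formula behaviour of $\delta_2$ with respect to the $W(\kappa)$-module structure (residue of a unit-scaled form commutes with multiplication by the reduction of the unit). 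For step (ii), the transfer $g_*$ is $W(\kappa)$-linear (Frobenius reciprocity: $g_*(\langle a\rangle\cdot g^*\psi) = g_*\langle a\rangle\cdot\psi$), hence $g_*(I^j(\kappa')\cdot W(\kappa',g^!H))\subseteq I^j(\kappa)\cdot W(\kappa,H)$ would need the image $g_*(I^j(\kappa'))$ to lie in $I^j(\kappa)$ --- but more simply, one notes that $I^j(\kappa',g^!H)$ is by definition $I^j(\kappa')\cdot W(\kappa',g^!H)$ and that any class there is a sum of $\psi\cdot\eta$ with $\psi\in W(\kappa,\cdot)$ pulled back and $\eta\in I^j$; applying reciprocity reduces to $g_*(I^j(\kappa'))\subseteq I^j(\kappa)$, which holds because $g_*$ is filtered for the fundamental ideal (transfer of an even-dimensional, resp. $j$-fold Pfister-generated, form stays in $I^j$ --- this is the standard compatibility of the transfer with the $I$-filtration).

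The main obstacle I anticipate is purely bookkeeping: keeping track of the one-dimensional $k(x)$-vector space twists $K_{k(x)}$, $K_{k(y)}$, $K_{k(\tilde y)}$ and the isomorphisms $K_{\kmmt}\simeq\Omega_{\kmmt}\otimes(\mmt/\mmt^2)^*\simeq g^!K_{\kmm}$ coherently, so that ``multiplication by an element of $I^j$'' on the source really does correspond, under $\delta$ and $tr$, to multiplication by an element of $I^{j-1}$ (resp. $I^j$) on the target. The honest mathematical inputs --- $\delta_2(I^j)\subseteq I^{j-1}$ and $g_*(I^j)\subseteq I^j$ together with $W(\kappa)$-linearity of both maps --- are classical and can be cited or quoted from \cite{MH73}; the work is to check that the twisting is spectator data throughout, which follows because all the isomorphisms involved are $\OO^\times$-equivariant and the fundamental module is defined module-theoretically. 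I would structure the write-up as: reduce to the local case; record $W(\kappa)$-linearity of $\delta$ and $tr$; invoke the classical filtered behaviour of $\delta_2$ and of the transfer; conclude.
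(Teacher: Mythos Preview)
Your approach is valid in outline but takes a different route from the paper. You invoke Theorem~\ref{theorem:Gille-Schmid} to replace $\partial^q$ by the Rost--Schmid differential $\partial^{RS}$, then factor through the normalization and treat the DVR residue and the finite transfer separately. The paper instead stays with $\partial^q$ over the one-dimensional local domain $A$ itself (not its normalization): choosing a trivialization $\iota$ of the twist, it sets up a cubical diagram comparing $\partial^q$ on the twisted groups with a residue map $d_\iota\colon W(F)\to W(k_\mathfrak{m})$ on the untwisted groups, then cites \cite[Theorem~6.6]{Gil07} for $d_\iota(I^j(F))\subseteq I^{j-1}(k_\mathfrak{m})$ and \cite[Diagram~(15)]{Gil07} for the compatibility of $\partial^q$ with $d_\iota$ under $\iota$; the remaining face of the cube is forced. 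The payoff of the paper's route is that the ``residue-plus-transfer'' behaviour is already packaged inside Gille's $d_\iota$, so one needs a single citation rather than two separate filtration statements.

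Your execution has one genuine gap. In step~(ii) you reduce the transfer compatibility to $g_*(I^j(\kappa'))\subseteq I^j(\kappa)$ via Frobenius reciprocity, but reciprocity only gives $g_*(g^*(\gamma)\cdot\beta)=\gamma\cdot g_*(\beta)$ for $\gamma\in W(\kappa)$, whereas the elements you must transfer are of the form $\alpha\cdot\beta$ with $\alpha\in I^j(\kappa')$, and $\alpha$ is not in general a pullback from $\kappa$. The correct reduction is simply to trivialize $g^!H\simeq\kappa'$, so that the claim becomes the untwisted statement $s_*(I^j(\kappa'))\subseteq I^j(\kappa)$ for a Scharlau transfer $s_*$; this is a known but nontrivial fact (it does not follow from reciprocity alone) and should be cited rather than derived. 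A similar remark applies to step~(i): $\delta_2$ is not $W(F)$-linear, so the ``projection-formula behaviour'' phrasing is misleading; the honest argument is again to trivialize and invoke $\delta_2(I^j(F))\subseteq I^{j-1}(\kappa)$ directly.
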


\begin{proof} 
	It is enough to prove the clasim for $X$ the spectrum of a one dimensional local domain $A$ with dualizing complex $K$. Let $F$ be the field of fractions, and $\pi_\eta: \Spec (F) \rightarrow \Spec (A)$ the canonical map. In this case, we mush show that the map
	$ \partial^q: \W(F, K_\eta) \rightarrow  \W(k_\mathfrak{m}, K_\mathfrak{m} ) $ from the Gersten-Witt complex preserves fundamental modules. Choosing suitable injective hulls (i.e. trivializations), one has a residue map 
	$d_\iota: \W(F) \rightarrow \W(k_\mathfrak{m})$ and a cubical commutative diagram	
\begin{align}
\begin{split}
\xymatrix{ & \W(F, K_\eta)  \ar[dd]^(.3){\iota} \ar[rr]^-{\partial^q} &&  \W(k_\mathfrak{m}, K_\mathfrak{m} )\ar[dd]^-{\iota} \\
		\I^j(F, K_\eta)  \ar[ur]\ar[rr]^(.6){\partial^q}  \ar[dd]^-{\iota} && \I^{j-1}(k_\mathfrak{m},  K_\mathfrak{m} ) \ar[dd]^(.3){\iota} \ar[ur] \\
		& \W(F) \ar[rr]^(.3){d_\iota} && \W(k_\mathfrak{m})     \\
		\I^j(F)  \ar[ur]\ar[rr]^-{d_\iota} && \I^{j-1}(k_\mathfrak{m}) \ar[ur]     }
\end{split}
\end{align}
The bottom diagram is commutative by \cite[Theorem 6.6]{Gil07}. The back vertical diagram is commutative by \cite[Diagram (15)]{Gil07} and our definition $\partial^q$ via the devissage isomorphism~\eqref{eq:devisowitt}. The upper diagram is therefore commutative by the commutativity of other five diagrams.
\end{proof}

\begin{definition}
\label{def:CXIjK}
By Lemma~\ref{lem:Icohocommutewithdiff}, the Rost--Schmid complex \eqref{eq:Rost-Schmid-complex-Witt} restricts to fundamental modules as 
\begin{align}
\resizebox{\textwidth}{!}{$
\bigoplus\limits_{\mu_K(x)=m} \I^j(k(x), K_{k(x)}) 
\xrightarrow{\partial^q} 
\bigoplus\limits_{\mu_K(x)=m+1} \I^{j-1}(k(x), K_{k(x)})  
\to \cdots \to  
\bigoplus\limits_{\mu_K(x)=n} \I^{j-(n-m)}(k(x), K_{k(x)}).  
$}
\end{align}
Denote this complex by $G(X, \I^j, K^{ })$, which can be sheafified into a complex of Zariski sheaves of abelian groups $\underline{G}(X, \I^j, K^{ })$.
\end{definition}

\subsection{$\I$-cohomology with support}
\label{sec:Icohsupp}

\subsubsection{}
Throughout Section~\ref{sec:Icohsupp}, we assume that $2$ is invertible in all schemes.

\subsubsection{}
Let $\pi: Z \rightarrow X$ be a closed immersion, with $j: U \hookrightarrow X$ its open complement. From the complex $\underline{G}(X, \I^j, K^{ })$ in Definition~\ref{def:CXIjK}, we define the complexes of sheaves
\begin{align} 
\underline{G} (X \on Z, \I^j, K^{ }) : = \ker \big[ \underline{G} (X , \I^j, K^{ })  \stackrel{j^*} \longrightarrow \underline{G} (U, \I^j, K^{ })  \big] .
\end{align}

\begin{definition}
We define the $\I$-cohomology of the scheme $X$ with minimal dualizing complex $K$ supported on $Z$ to be the hypercohomology
\begin{align}
 H^i(X \on Z , \I^j, K^{ }) : = \mathbb{H}^{i}(\underline{G} (X \on Z, \I^j, K^{ }) ) 
\end{align}
We denote $H^i(X , \I^j, K)=H^i(X \on X , \I^j, K^{ })$.
\end{definition}

\subsubsection{}
Let $D^b_{c,Z}(\Q(X))$ be the full triangulated subcategory of $D^b_c(\Q(X))$ of complexes supported on $Z$, that is, complexes $\mathcal{F}$ such that $\support F\subseteq Z$.
Define 
\begin{align}
\D^p_{Z,X} : = \Big\{ M \in \D^b_{c,Z}(\mathcal{Q}(X))\mid \mu_K(x)\geq p  \textnormal{ for all $x \in \support(M)$}  \Big\}
\end{align}
considered as a full subcategory of $ \D^b_{c,Z}(\mathcal{Q}(X))$. Since the duality functor $ \#^{K^{ }}$ preserves the subcategory $\D^p_{Z,X} $, the pair $( \D^p_{Z,X},\#^{K^{ }})$ is a triangulated category with duality. The subcategories $\D^p_{Z,X} $ provide a finite filtration
\begin{align}
\D^b_{c,Z}(\mathcal{Q}(X)) =   \D^m_{Z,X} \supseteq \D_{Z,X}^{m+1} \supseteq \cdots \supseteq \D_{Z,X}^p \supseteq \cdots \supseteq \D_{Z,X}^n  \supseteq (0) 
\end{align}
which induces exact sequences of triangulated categories with duality
\begin{align}
 \D_{Z,X}^{p+1} \longrightarrow \D_{Z,X}^{p} \longrightarrow \D_{Z,X}^{p}/ \D_{Z,X}^{p+1}. 
\end{align}
On the other hand, we define
\begin{align}
\D_Z^{p}: = \Big\{ M \in \D^b_{c}(\mathcal{Q}(Z))\mid \mu_{\pi^!K}(x)\geq p  \textnormal{ for all $x \in \support(M)$}  \Big\}  
\end{align}
as a full subcategory of $ \D^b_{c}(\mathcal{Q}(Z))$. For the same reason, we have a finite filtration
\begin{align}
 \D^b_{c}(\mathcal{Q}(Z)) =   \D^m_Z \supseteq \D_Z^{m+1} \supseteq \cdots  \supseteq \D_{Z}^p \supseteq \cdots \supseteq \D_Z^n \supseteq (0) 
\end{align}
which induces exact sequences of triangulated categories with duality
\begin{align}
\D_{Z}^{p+1} \longrightarrow \D_{Z}^{p} \longrightarrow \D_{Z}^{p}/ \D_{Z}^{p+1}.   \end{align}
Since $\mu_{K} (z) = \mu_{ \pi^! K} (z)$ for all $z\in Z$, we have $\pi_*( \D_{Z}^{p}) \subset \D_{Z,X}^{p} $. It follows that we obtain a map of exact sequences of triangulated categories with duality
\begin{align}
    \begin{CD} (\D_{Z}^{p+1}, \#^{\pi^\flat K^{ }})  @>>> (\D_{Z}^{p}, \#^{\pi^\flat K^{ }}) @>>>  (\D_{Z,X}^{p}/ \D_{Z,X}^{p+1} , \#^{\pi^\flat K^{ }})  \\ 
@V{\pi_*}VV  @V{\pi_*}VV  @V{\pi_*}VV \\
(\D_{Z,X}^{p+1} , \#^{K^{ }}) @>>>  (\D_{Z,X}^{p}, \#^{K^{ }}) @>>> (\D_{Z,X}^{p}/ \D_{Z,X}^{p+1}, \#^{K^{ }})
\end{CD}     
\end{align}
which induces a map of long exact sequences of groups
\begin{align}
\label{map}  
\small{  \begin{CD} \cdots  \longrightarrow & \W^i  (\D_{Z}^{p}, \#^{\pi^\flat K^{ }})  & \longrightarrow & \W^i (\D_{Z}^{p}/ \D_{Z}^{p+1} , \#^{\pi^\flat K^{ }} )&\longrightarrow& \W^{i+1} (\D_{Z}^{p+1}, \#^{\pi^\flat K^{ }})   & \longrightarrow \cdots &\\ 
	& @V{\pi_*}VV  @V{\pi_*}VV  @V{\pi_*}VV  \\
	\cdots \longrightarrow & \W^i   (\D_{Z,X}^{p}, \#^{K^{ }}) &\longrightarrow& \W^i  (\D_{Z,X}^{p}/ \D_{Z,X}^{p+1}, \#^{K^{ }})  &\longrightarrow& \W^{i+1}  (\D_{Z,X}^{p+1} , \#^{K^{ }}) & \longrightarrow \cdots&\phantom{a} .
	\end{CD}   }  
\end{align}
On the other hand, the localization functors
\begin{align}
 loc: \D_{Z,X}^{p}/ \D_{Z,X}^{p+1} \rightarrow \prod_{x \in Z \cap X^{(p)}_{\mathcal{I}}} \D^b_{\mathfrak{m}_{X,x}}(\mathcal{O}_{X,x}) 
 \end{align}
are equivalences of categories. From this, we deduce a morphism of complexes
\begin{align}\label{devissagekey}  \small{  \begin{CD} \cdots   \longrightarrow & \bigoplus\limits_{x \in Z^{(p)}_{\pi^!K} } \W^p_{\mathfrak{m}_x}(\Oo_{Z,x}, \pi^\flat K^{ }_x) &\longrightarrow& \bigoplus\limits_{x \in Z^{(p+1)}_{\pi^!K} } \W^{p+1}_{\mathfrak{m}_x}(\Oo_{Z,x},\pi^\flat K^{ }_x) & \longrightarrow \cdots &\\ 
	&  @V{\pi_*}VV  @V{\pi_*}VV  \\
	\cdots \longrightarrow& \bigoplus\limits_{x \in Z \cap X^{(p)}_{K} } \W^p_{\mathfrak{m}_x}(\Oo_{X,x}, K^{ }_x)  &\longrightarrow& \bigoplus\limits_{x \in Z \cap X^{(p+1)}_{K} }  \W^{p+1}_{\mathfrak{m}_x}(\Oo_{X,x}, K^{ }_x)  & \longrightarrow \cdots&\phantom{a} .
	\end{CD}   }  
\end{align}
Now the devissage of Gille \cite{Gil07} (or \cite[Lemma 4.5]{Xie20}) says that all the vertical arrows in~\eqref{devissagekey} are isomorphisms, by reducing both groups to Witt groups of fields. The upper line of \eqref{devissagekey} computes $H^i(Z, \W, \pi^! K^{ })$ and the lower line computes $H^i (X \on Z, \W, K^{ })$. By Lemma \ref{lem:Icohocommutewithdiff}, we conclude that: 
\begin{theorem}
	\label{thm:devissagK_Ij}
 Let $\pi: Z \rightarrow X$ be a closed immersion and let $K^{ }$ be a minimal dualizing complex on $X$.  Assume that $2$ is invertible on both schemes. Then the map 
\begin{align}
 \pi_* : H^i(Z, \I^j, \pi^! K^{ }) \longrightarrow  H^i (X \on Z, \I^j, K^{ })
\end{align}
	induced by Diagram (\ref{devissagekey}) is an isomorphism for any $i, j \in \Z$. 
\end{theorem}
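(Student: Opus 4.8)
The plan is to deduce Theorem~\ref{thm:devissagK_Ij} from the machinery already assembled in Section~\ref{sec:Icohsupp}, by restricting the map of long exact sequences of Witt groups to the fundamental modules and running the same devissage argument level by level. First I would observe that the whole discussion producing Diagram~\eqref{devissagekey} is a filtration argument: the filtrations $\D^\bullet_{Z,X}$ on $\D^b_{c,Z}(\Q(X))$ and $\D^\bullet_Z$ on $\D^b_c(\Q(Z))$, together with the compatible pushforward $\pi_*$, yield a map of spectral sequences whose $E_1$-pages are the two rows of~\eqref{devissagekey}, and Gille's devissage identifies the vertical arrows with (sums of) devissage isomorphisms of Witt groups of residue fields. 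So the first step is to record that, after unwinding the localization and devissage identifications, the two rows of~\eqref{devissagekey} are exactly $C(Z,\mathbf{W},\pi^!K)$ and $C(X\on Z,\mathbf{W},K)$ (the latter defined as the kernel of $j^*$ to the open complement, which on the level of Rost--Schmid complexes is the subcomplex supported on points of $Z$), and the vertical map is $\pi_*$ built from the twisted transfer maps of~\eqref{eq:trWitt}.

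Next I would restrict everything to fundamental modules. By Lemma~\ref{lem:Icohocommutewithdiff}, the differential $\partial^q$ of the Gersten--Witt complex carries $I^j(k(x),K_{k(x)})$ into $I^{j-1}(k(y),K_{k(y)})$, so both rows of~\eqref{devissagekey} contain subcomplexes obtained by replacing each $W(k(x),K_{k(x)})$ by the appropriate $I^{j-(\bullet-m)}(k(x),K_{k(x)})$; these are by definition the complexes computing $H^i(Z,\I^j,\pi^!K)$ and $H^i(X\on Z,\I^j,K)$. The key point to verify is that the vertical map $\pi_*$ of~\eqref{devissagekey} sends the $\I^j$-subcomplex of the top row into the $\I^j$-subcomplex of the bottom row, \emph{and} that it does so via an \emph{isomorphism}. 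Compatibility with fundamental modules is because the twisted transfer map $\Z[L^\times]$-linearly over $\Z[F^\times]$ is $W(F)$-linear through the restriction $W(F)\to W(L)$ and hence carries $I^j(L)\cdot W(L,g^!H)$ into $I^j(F)\cdot W(F,H)$; the devissage isomorphism $\pi_{x,*}$ of~\eqref{eq:devisowitt} is a particular instance of such a transfer (for the finite extension $k(x)/k(\mathfrak m)$ composed with the fundamental local isomorphism), so it preserves the fundamental submodules on the nose.

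For the \emph{isomorphism} claim on fundamental modules, I would argue that each vertical arrow of~\eqref{devissagekey}, being the devissage isomorphism $\bigoplus_x W^p_{\mathfrak m_x}(\Oo,K_x)\simeq\bigoplus_x W(k(x),K_{k(x)})$ of Gille (\cite{Gil07}, or \cite[Lemma 4.5]{Xie20}), is an isomorphism that restricts to an isomorphism on the $I^j$-parts; this is most cleanly seen by reducing to a discrete valuation ring, where the devissage isomorphism is literally the residue/transfer pairing whose effect on fundamental ideals is controlled by \cite[Theorem 6.6]{Gil07} exactly as in the proof of Lemma~\ref{lem:Icohocommutewithdiff}. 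Once every vertical arrow in the $\I^j$-restricted version of~\eqref{devissagekey} is an isomorphism of complexes, passing to hypercohomology gives the desired isomorphism $\pi_*\colon H^i(Z,\I^j,\pi^!K)\xrightarrow{\sim}H^i(X\on Z,\I^j,K)$ for all $i,j\in\Z$. The main obstacle I anticipate is the bookkeeping needed to check that \emph{all} the identifications made in constructing~\eqref{devissagekey} (the localization equivalences, the devissage isomorphism~\eqref{eq:devisowitt}, and the twisted transfers) are simultaneously compatible with the fundamental-module filtration; none of these steps is individually hard, but verifying the filtration is respected throughout the chain of identifications — rather than only at the two ends — requires care, and is really the content beyond a formal diagram chase.
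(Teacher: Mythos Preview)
Your proposal is correct and follows the same route as the paper: identify the two rows of~\eqref{devissagekey} with the Gersten--Witt complexes for $Z$ and for $X$ supported on $Z$, invoke Gille's devissage to see that the vertical arrows are isomorphisms, and then use Lemma~\ref{lem:Icohocommutewithdiff} to restrict to the $\I^j$-subcomplexes before passing to hypercohomology. One simplification you are missing: the paper observes that the devissage isomorphisms on \emph{both} sides reduce the groups to Witt groups of the \emph{same} residue fields $W(k(x),K_{k(x)})$ (the points of $Z^{(p)}_{\pi^!K}$ and of $Z\cap X^{(p)}_K$ coincide, with the same residue fields), so after these identifications the vertical map is essentially the identity and compatibility with the fundamental-module filtration is immediate---your detour through $W(F)$-linearity of transfers and \cite[Theorem~6.6]{Gil07} is not needed here.
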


\section{The (twisted) signature map}
\label{sec:signature}

\subsection{Twisted residue} \label{sec:tr}
\subsubsection{}
Let $X$ be a scheme with a minimal dualizing complex $K^{ }$. Let $x,y \in X$ be such that $\mu_K(x)=p$ and $\mu_K(y)=p+1$.\ Recall that both $K_{k(x)}$ and $K_{k(y)}$ are one-dimensional vector spaces over their corresponding residue fields. Our goal in this section is to define a twisted differential
\begin{align}
\label{eq:partialre}
\partial_{re}\colon  C(x_r, \Z(K_{k(x)})) \rightarrow C(y_r, \Z(K_{k(y)})).
\end{align}
The map~\eqref{eq:partialre} will be defined by a twisted residue map followed by a twisted transfer map on the real spectrum.

\subsubsection{}
Let $R$ be a discrete valuation ring with field of fractions $F$ and let $\mathfrak{m}$ be its maximal ideal. Let $k_\mathfrak{m}$ be it is residue field. Choose a uniformizing parameter $\pi$ of $R$. Suppose $P$ is an ordering on $F$. We say $R$ is \textit{convex} on $(F,P)$ whenever for all $x,y,z \in F$, $x \leq_P z \leq_P y$ and  $x,y \in R$ implies $z \in R$. For any ordering $\bar{\xi}$ on $k_\mathfrak{m}$, the set
\begin{align}
Y_{\bar{\xi}}:= \{P \in \Sper (F) | R \textnormal{ is convex in } (F,P), \textnormal{ and $\bar{\xi} = \bar{P}$ on $k_\mathfrak{m}$}  \}
\end{align}
maps bijectively to $\{\pm 1\}$ by sending $P$ to $\sign_P(\pi)$
\cite[Section 3]{jacobson}. Denote $\xi_\pm^\pi$ the ordering on $F$ such that $\sign_{\xi_\pm^\pi}(\pi) = \pm1$. Jacobson \cite[Section 3]{jacobson} defines a group homomorphism
\begin{align}
\label{eq:betares}
\begin{split}
\beta_\pi\colon C(\Sper(F), \Z) &\rightarrow C(\Sper(k_\mathfrak{m}), \Z)\\
s &\mapsto \left(\bar{\xi}\mapsto s(\xi_+^\pi) - s(\xi_-^\pi)\right)
\end{split}
\end{align}
called the residue map. The map~\eqref{eq:betares} depends on the choice of uniformizing parameter. 
\begin{remark}
	Note that by~\ref{sec:2invert}, if $\kmm$ is finite characteristic, 
	then the target of the $\beta_\pi$ is zero. 
\end{remark}

\subsubsection{}
For a field $Q$ and a one-dimensional $Q$-vector space $H$, 
by Definition~\ref{def:realtw}, the group $C(\Sper(Q), \Z(H))$ is the group
\begin{align}
C(\Sper(Q), \Z) \otimes_{\Z[Q^\times]} \Z[H^\times]
\end{align}
where the underlying map
\begin{align}
\zeta\colon \Z[Q^\times] \rightarrow C(\Sper(Q), \Z)
\end{align}
is the ring homomorphism given by
\begin{align}
\left(\textstyle\sum n_a a\right) \mapsto \left( \xi \mapsto \sum n_a \sign_\xi a\right).
\end{align}
	The twisted residue map is the group homomorphism
\begin{align}
\begin{split}
	\beta' \colon C(\Sper(F), \Z) &\rightarrow C(\Sper(k_{\mathfrak m}), \Z({(\mathfrak{m}/\mathfrak{m}^2)^*}))\\
	s &\mapsto \beta_\pi(s) \otimes \pi^*
\end{split}
\end{align}
	where $\pi^*$ is the dual basis of the basis $\pi$ of the one-dimensional $k_{\mathfrak{m}}$-vector space  $\mathfrak{m}/\mathfrak{m}^2$.\

\begin{lemma}
	The twisted residue map $\beta'$ does not depend on the choice of the uniformizing parameter.
\end{lemma}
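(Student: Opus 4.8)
\emph{Proof proposal.} The plan is to reduce the independence to a direct bookkeeping argument relating two choices of uniformizer. Let $\pi$ and $\pi'$ be uniformizing parameters of $R$; since $R$ is a discrete valuation ring, $\pi' = u\pi$ for a unique unit $u\in R^\times$, and write $\bar u\in k_{\mathfrak m}^\times$ for its residue. In the one-dimensional $k_{\mathfrak m}$-vector space $\mathfrak m/\mathfrak m^2$ the images satisfy $\pi' = \bar u\,\pi$, so the dual bases satisfy $(\pi')^* = \bar u^{-1}\pi^*$ in $(\mathfrak m/\mathfrak m^2)^*$. By the defining relation of the twisted group $C((k_{\mathfrak m})_r,\Z)\otimes_{\Z[k_{\mathfrak m}^\times]}\Z[((\mathfrak m/\mathfrak m^2)^*)^\times]$, and because $\sign$ is valued in $\{\pm1\}$ so that $\zeta(\bar u^{-1}) = \zeta(\bar u)$, this yields
\[
\beta_{\pi'}(s)\otimes(\pi')^* = \bigl(\zeta(\bar u)\cdot\beta_{\pi'}(s)\bigr)\otimes\pi^*.
\]
Thus it suffices to prove the identity $\zeta(\bar u)\cdot\beta_{\pi'}(s) = \beta_\pi(s)$ in $C((k_{\mathfrak m})_r,\Z)$; evaluating at an arbitrary ordering $\bar\xi$ of $k_{\mathfrak m}$, this amounts to $\sign_{\bar\xi}(\bar u)\bigl(s(\xi_+^{\pi'}) - s(\xi_-^{\pi'})\bigr) = s(\xi_+^\pi) - s(\xi_-^\pi)$.

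Next I would fix $\bar\xi$ and compare the two orderings of $F$ lying over it. The set $Y_{\bar\xi}$ of orderings $P$ of $F$ for which $R$ is convex in $(F,P)$ and $\bar P = \bar\xi$ depends only on $R$ and $\bar\xi$, not on any uniformizer, and by Jacobson's description it has exactly two elements, so $\{\xi_+^\pi,\xi_-^\pi\} = \{\xi_+^{\pi'},\xi_-^{\pi'}\} = Y_{\bar\xi}$. The heart of the matter is to track how the two uniformizers distribute the labels $\pm$. For $P\in Y_{\bar\xi}$ we have $\sign_P(\pi') = \sign_P(u)\,\sign_P(\pi)$, and since $u$ is a unit of the ring $R$ which is convex with respect to $P$, a classical fact of real algebra (the maximal ideal of a convex valuation ring is convex, so the residue map is compatible with the orderings) gives $\sign_P(u) = \sign_{\bar P}(\bar u) = \sign_{\bar\xi}(\bar u)$. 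Hence $\sign_P(\pi') = \sign_{\bar\xi}(\bar u)\cdot\sign_P(\pi)$ for both $P\in Y_{\bar\xi}$.

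From this the identity follows by splitting into two cases. If $\sign_{\bar\xi}(\bar u) = +1$, then $\xi_\pm^{\pi'} = \xi_\pm^\pi$ and both sides agree on the nose. If $\sign_{\bar\xi}(\bar u) = -1$, then the labels are interchanged, $\xi_+^{\pi'} = \xi_-^\pi$ and $\xi_-^{\pi'} = \xi_+^\pi$, so $\beta_{\pi'}(s)(\bar\xi) = -\beta_\pi(s)(\bar\xi)$, and multiplication by $\sign_{\bar\xi}(\bar u) = -1$ restores the equality. (If $k_{\mathfrak m}$ carries no ordering, then $(k_{\mathfrak m})_r = \emptyset$ and there is nothing to check.) I expect the only point requiring care — though it is standard — to be the compatibility $\sign_P(u) = \sign_{\bar\xi}(\bar u)$ for units $u$ of the convex valuation ring $R$, which I would justify in one line using the convexity of $\mathfrak m$ in $(F,P)$.
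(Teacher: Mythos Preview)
Your proof is correct and follows essentially the same approach as the paper's: both compare two uniformizers $\pi$ and $u\pi$, move the $\bar u^{-1}$ factor from $(\pi')^*$ across the balanced tensor product via $\zeta$, and cancel it against the change in $\beta_\pi$ using $\sign_P(u)=\sign_{\bar\xi}(\bar u)$. The only difference is that you spell out the case analysis and the convexity justification for $\sign_P(u)=\sign_{\bar\xi}(\bar u)$ explicitly, whereas the paper asserts these in one line.
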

\begin{proof}
	Let $a \pi$ be another uniformizing parameter with $a \in R - \mathfrak{m}$.
	Note that $\beta'_{a\pi}(s)(\bar{\xi})  = \sign_P(a)\beta'_\pi(s)(\bar{\xi})$ and $\sign_P(a) = \sign_{\bar{\xi}}(a)$.
	It follows that
\begin{align}
	\beta'_{a\pi}(s)  \otimes {a\pi}^* = \beta'_{a\pi}(s) \zeta(a)  \otimes {\pi}^*
\end{align}
	Now, $\beta'_{a\pi}(s) \zeta(a)(\bar{\xi})= \sign_{\bar{\xi}}(a)^2\beta'_\pi(s)(\bar{\xi})   =\beta'_\pi(s)(\bar{\xi}).$
	The result follows.
\end{proof}

\subsubsection{}
Now let $H$ be a free $R$-module of rank $1$, and let $t:R\rightarrow H$ be a trivialization. Then $t_F: F\rightarrow H_F$ and $t_{\km}: \km \rightarrow H_{\km} $ are both trivializations, which induce isomorphisms
\begin{align}
t_F: C(\Sper(F), \Z)\simeq C(\Sper(F), \Z(H_F) )
\end{align}
\begin{align}
t_{\km}: C(\Sper(\km), \Z) \simeq C(\Sper(\km), \Z(H_{\km}))
\end{align}
\begin{definition}\label{Def:twist-residue}
The \textbf{twisted residue map} is the group homomorphism
\begin{align}
\label{eq:twistres}
\begin{split}
\beta\colon C(\Sper(F), \Z(H_F)) &\rightarrow C(\Sper(k_{\mathfrak m}), \Z({ H_{\km} \otimes (\mathfrak{m}/\mathfrak{m}^2)^*}))\\
s &\mapsto t_{\km} \beta(t_F^{-1}(s))
\end{split}
\end{align}
which does not depend on the choice of trivialization.
\end{definition}
\subsection{Twisted transfer}
\label{sec:twitr}
\subsubsection{}
Let $F\to L$ be a finite field extension and let $P$ be an ordering on $F$. We say that an ordering $R$ on $L$ is an extension of $P$ if the image of $P$-positive elements in $L$ are $R$-positive. 
Define a map
\begin{align}
\label{eq:CFtr}
\begin{split}
t\colon C(\Sper(L), \Z) &\rightarrow C(\Sper(F), \Z)\\
\phi &\mapsto \left(P \mapsto \sum_{R\supset P} \phi(R)\right)
\end{split}
\end{align}
where the sum runs through all extensions $R$ of $P$.  The map~\eqref{eq:CFtr} is well-defined as the sum is finite, and will be called the transfer map.\footnote{Note that the number of such extensions equals $\sign_P(Tr_*\langle 1 \rangle)$, cf.\ \cite[\S3, Theorem~4.5]{Scha}).}
\newcommand{\tr}{\mathrm{tr}}

Denote by $f: \Spec (L) \rightarrow \Spec (F)$ the canonical map. If $H$ is a one-dimensional $F$-vector space, then we have $f^! H : = \Hom_F(L, H)$. There is a trace form $\tr : L \rightarrow \Hom_F(L,F)$ which gives rise to an isomorphism $\tr_H : H_L \rightarrow \Hom_F(L,H)  = f^!H$ by tensoring by $H$ on both sides.  
\begin{definition}\label{Def:twisted-transfer}
	The \textbf{twisted transfer map} is the group homomorphism
\begin{align}
\label{eq:twistedtr}
\begin{split}
t\colon C(\Sper(L), \Z(f^! H)) &\rightarrow C(\Sper(F), \Z(H))\\
\phi &\mapsto \left(P \mapsto \sum_{R\supset P} \tr_H^{-1}\phi(R)\right)
\end{split}
\end{align}
given by the composition
\begin{align}
C(\Sper(L), \Z(f^! H)) 
\xrightarrow{\id \otimes \tr_H^{-1}}
C(\Sper(L), \Z(H_L))=C(\Sper(L), \Z(H))  
\xrightarrow{t\otimes \id} 
C(\Sper(F), \Z(H)).
\end{align}
\end{definition}

\subsection{The Gersten complex of a minimal dualizing complex}
\label{sec:GerstenCXr}
\subsubsection{}
\label{num:betadiff}
  If $A$ is a one dimensional local domain with field of fractions $F$, we denote $\tilde{A}$ be its normalization. $\tilde{A}$ contains finitely many maximal ideals. Choose one maximal ideal $\tilde{\mathfrak{m}}$ of $\tilde{A}$ . Now, $\tilde{A}_{\tilde{\mathfrak{m}}}$ is a discrete valuation ring with field of fractions $F$ with residue field $k(\tilde{\mathfrak{m}})$, and $\tilde{A}_{\tilde{\mathfrak{m}}}$ is a finitely generated $A$ module. We have a map $\tilde{f}:\mathrm{Spec}(\tilde{A}_{\tilde{\mathfrak{m}}}) \rightarrow \mathrm{Spec}(A)$. If $K^{ }$ is a dualizing complex on $A$, then  $\tilde{f}^! K^{ }$ is a dualizing complex on $\mathrm{Spec}(\tilde{A}_{\tilde{\mathfrak{m}}})$. We have a commutative diagram
\begin{align}
\begin{split}
 \xymatrix{\mathrm{Spec}(\tilde{A}_{\tilde{\mathfrak{m}}}) \ar[r]^-{\tilde{f}} &  \mathrm{Spec}(A)   \\
	\mathrm{Spec}(k(\tilde{\mathfrak{m}} ) ) \ar[u]^-{\tilde{\pi}}  \ar[r]^g &  \mathrm{Spec}(k(\mathfrak{m})) \ar[u]^-{\pi} }
\end{split}
\end{align}

Let $K_{\kmmt} : = (\tilde{f}^!K)_{\kmmt}$ and let $\Omega$ be the kernel of the first differential.\ We have a residue map
\begin{align}
\label{eq:beta}
	\beta: C(\Sper(F), \Z(K_F)) \rightarrow C(\Sper(k(\tilde{\mathfrak{m}})), \Z( K_{\kmmt} ) )\end{align}
induced by $\beta'$ above via Definition \ref{Def:twist-residue}, since we have $K_F \simeq \Omega_F$ and
\begin{align}
K_{\kmmt} 
\overset{\eqref{eq:lcipur}}{\simeq}
\Omega \otimes_{\tilde{A}_{\mmt}} (\mmt/\mmt^2)^* 
\simeq
\Omega_{\kmmt} \otimes_{\kmmt} (\mmt/\mmt^2)^*.
\end{align}

Note that $k(\mathfrak{m}) \subset k(\tilde{\mathfrak{m}})$ with $\kmmt$ a finite extension of $k(\mathfrak{m})$, so we have $K_{\kmmt}  = g^! K_{\kmm}$. By Definition \ref{Def:twisted-transfer} the twisted transfer~\eqref{eq:twistedtr} can be expressed as
\begin{align}
\label{eq:tsp}
	t\colon C(\Sper(\kmmt), \Z( K_{\kmmt}   ))  \simeq C(\Sper(\kmmt), \Z(g^! K_{\kmm}  ) ) \rightarrow C(\Sper(k(\mathfrak{m})), \Z(K_{k(\mathfrak{m})   }  ) ).
\end{align}

\subsubsection{}
\label{num:Cres}
Let $X$ be a scheme with a minimal dualizing complex $K^{ }$. If $x$ and $y$ are two points of $X$, by the same procedure as in~\ref{num:wittres}, we define the differential mentioned in~\eqref{eq:partialre}
\begin{align}
\label{eq:partre}
 \partial_{re}\colon \bigoplus\limits_{\mu_K(x)=i} C(x_r, \Z({K_{k(x)}})) \rightarrow \bigoplus\limits_{\mu_K(y)=i+1} C(y_r, \Z({K_{k(y)}}))
\end{align}
as the composition
\begin{align}
 C(x_r, \Z({K_{k(x)}})) \xrightarrow{\eqref{eq:beta}} \bigoplus\limits_{ \tilde{y}} 
 C(\tilde{y}_r, \Z({K_{k(\tilde{y})}}))
  \xrightarrow{\eqref{eq:tsp}} C(y_r, \Z({K_{k(y)}}))
\end{align}
where the sum is taken over all points $\tilde{y}$ that dominate y and
live inside the normalization $\widetilde{\overline{\{x\}}}$ of $\overline{\{x\}}$
in its residue field $k(x)$. 

\begin{theorem}\label{theo:gersten-complex-real}
Let $X$ be a scheme with minimal dualizing complex $K$. 
Then the following precomplex, denoted as $G(X_r, K^{ })$, is a complex of abelian groups:
\begin{align}
\label{eq:CXrK}
\resizebox{\textwidth}{!}{$
\bigoplus\limits_{\mu_K(x)=m} C(x_r, \Z({K_{k(x)}})) 
\xrightarrow{\partial_{re}}
\bigoplus\limits_{\mu_K(x)=m+1} C(x_r, \Z({K_{k(x)}}))
\to\cdots\to
\bigoplus\limits_{\mu_K(x)=n} C(x_r, \Z({K_{k(x)}}))
$}
\end{align}
\end{theorem}
\begin{proof} By~\ref{sec:2invert} we may assume that $X$ has characteristic $0$, therefore with $2$ invertible in $X$. We construct a commutative ladder diagram 
	\begin{align*}
	\label{eq:def_CXrK}
	\resizebox{\textwidth}{!}{$
	\xymatrix{ 
		\bigoplus\limits_{\mu_K(x)=m} \W(k(x), {K_{k(x)}}) \ar[r]^-{\partial} \ar[d]^-{\sign}&  \bigoplus\limits_{\mu_K(x)=m+1} \W(k(x), {K_{k(x)}}) \ar[r] \ar[d]^-{2 \cdot\sign} & \cdots \ar[r]  & \bigoplus\limits_{\mu_K(x)=n} \W(k(x), {K_{k(x)}}) \ar[d]^-{2^{n-m}\cdot \sign} \\
		\bigoplus\limits_{\mu_K(x)=m} C(x_r, \Z({K_{k(x)}})) \ar[r]^-{\partial_{re}} &  \bigoplus\limits_{\mu_K(x)=m+1} C(x_r, \Z({K_{k(x)}})) \ar[r] & \cdots \ar[r]  & \bigoplus\limits_{\mu_K(x)=n} C(x_r, \Z({K_{k(x)}}))    }
		$}
	\end{align*} 
	We shall pick up one square in the ladder diagram to check the commutativity
\begin{align}
\begin{split}
 \xymatrix{ \W(k(x), {K_{k(x)}}) \ar[r]^-{\delta} \ar[d]^-{\sign}&  \bigoplus\limits_{\tilde{y}} \W(k(\tilde{y}), {K_{k(\tilde{y})}})  \ar[d]^-{2 \cdot\sign} \ar[r]^-{\tr} & \W(k(y), {K_{k(y)}})  \ar[d]^-{2 \cdot\sign}  \\
	 C(x_r, \Z({K_{k(x)}})) \ar[r]^-{\beta} &  \bigoplus\limits_{\tilde{y}} C(\tilde{y}_r, \Z({K_{k(\tilde{y})}})) \ar[r]^-{t} &   C(y_r, \Z({K_{k(y)}})) }
\end{split}
\end{align}
  where the sum is taken over all points $\tilde{y}$ that dominate y and
 live inside the normalization $\widetilde{\overline{\{x\}}}$ of $\overline{\{x\}}$
 in its residue field $k(x)$. The problem is reduced to one dimensional case. To explain this, we shall denote $A : = (\mathcal{O}_{X,x})_y$ which is a one dimensional local domain, and therefore $F: =k(x)$ is the field of fraction of $A$ and $\kmm : = k(y)$ is the residue field of $A$. If $\tilde{A}$ is the normalization of $A$ inside $F$, then $\tilde{A}_{\kmmt}$ is a discrete valuation ring with residue field $\kmmt : = k(\tilde{y})$. The commutativity of the left square is obtained by \cite[Lemma 3.1 and 3.2]{jacobson} and \cite[Lemma A.8]{HWXZ}. The commutativity of the right square is obtained by a similar argument in \cite[Lemma A.9]{HWXZ}.
 
 Recall that $\I^\infty(k(x), K_{k(x)})$ is the colimit of the sequence
\begin{align} 
\I^0(k(x), K_{k(x)}) \xrightarrow{\langle \langle -1\rangle \rangle} \I^1(k(x), K_{k(x)}) \xrightarrow{\langle \langle -1\rangle \rangle} \cdots \rightarrow \I^j(k(x), K_{k(x)}) \rightarrow \cdots  
\end{align}
 One can promote the ladder diagram of Witt groups $\W(k(x), K_{k(x)})$ to the group $\I^\infty(k(x), K_{k(x)})$, and we obtain an isomorphism of complexes (as in \cite[Corollary A.11 and Corollary A.12]{HWXZ}

\resizebox{\textwidth}{!}{$$
 \xymatrix{ 
 	\bigoplus\limits_{\mu_K(x)=m} \I^\infty (k(x), {K_{k(x)}}) \ar[r]^-{\partial} \ar[d]^-{\sign}&  \bigoplus\limits_{\mu_K(x)=m+1} \I^\infty (k(x), {K_{k(x)}}) \ar[r] \ar[d]^-{\sign} & \cdots \ar[r]  & \bigoplus\limits_{\mu_K(x)=n} \I^\infty(k(x), {K_{k(x)}}) \ar[d]^-{\sign} \\
 	\bigoplus\limits_{\mu_K(x)=m} C(x_r, \Z({K_{k(x)}})) \ar[r]^-{\partial_{re}} &  \bigoplus\limits_{\mu_K(x)=m+1} C(x_r, \Z({K_{k(x)}})) \ar[r] & \cdots \ar[r]  & \bigoplus\limits_{\mu_K(x)=n} C(x_r, \Z({K_{k(x)}}))    }
$$}
The vertical maps are isomorphisms by a theorem of Arason-Knebusch (\cite[Proposition 2.7]{jacobson}). The upper line is a complex by Theorem \ref{theorem:Gille-Schmid} and Lemma \ref{lem:Icohocommutewithdiff}. We conclude that the lower line is also a complex. 
\end{proof}

\begin{definition}
Let $X$ be a scheme with a minimal dualizing complex $K$. 
For any open constructible subset $U$ of $X_r$, we denote by $G(U, K^{ })$ the following subcomplex of $G(X_r, K^{ })$ in~\eqref{eq:CXrK}
\begin{align}
\label{eq:CVK}
\resizebox{\textwidth}{!}{$
\bigoplus\limits_{\mu_K(x)=m} C(x_r\cap U, \Z({K_{k(x)}})) 
\xrightarrow{\partial_{re}}
\bigoplus\limits_{\mu_K(x)=m+1} C(x_r\cap U, \Z({K_{k(x)}}))
\xrightarrow{\partial_{re}}
\cdots
\xrightarrow{\partial_{re}}
\bigoplus\limits_{\mu_K(x)=n} C(x_r\cap U, \Z({K_{k(x)}})).
$}
\end{align}
By sheafifying the complex $U\mapsto G(U, K^{ })$ in~\eqref{eq:CVK}, we obtain a complex of sheaves of abelian groups on $X_r$
		\begin{align}
	\label{eq:def_CXrK}
\bigoplus\limits_{\mu_K(x)=m} (i_{x_r})_* \Z({K_{k(x)}}) 
\xrightarrow{\partial_{re}}
\bigoplus\limits_{\mu_K(x)=m+1} (i_{x_r})_* \Z({K_{k(x)}})
\to \cdots \to
\bigoplus\limits_{\mu_K(x)=n} (i_{x_r})_* \Z({K_{k(x)}})
	\end{align} 
	which we denote by $\underline{G}(X_r,K^{ })$. In what follows we call it the \textbf{Gersten complex} on $X_r$.
\end{definition}

\subsubsection{}
If $K$ and $K'$ are two minimal dualizing complexes on $X$ which are quasi-isomorphic in $D^b_c(\mathcal{Q}(X))$, then there is an isomorphism  $G(X_r, K^{ })\simeq G(X_r, K')$ in $D(X_r)$. Since any dualizing complex is quasi-isomorphic to a mininal dualizing complex (cf. \cite{Gil07}), we can extend the definition of the complexes $G(U, K^{ })$ and $\underline{G}(X_r,K^{ })$ to any dualizing complex $K$, which are well-defined objects in $D(Ab)$ and $D(X_r)$ respectively.

\subsubsection{}
\label{num:Gerlb}
Note that since the twisting operation is compatible with tensor products of invertible modules (\ref{eq:twimorel}), if $L$ is an invertible $\mathcal{O}_X$-module, we have 
\begin{align}
\label{eq:Gtwilb}
\underline{G}(X_r, K^{ }\otimes L[n])=\underline{G}(X_r, K^{ })\otimes \mathbb{Z}(L)[n].
\end{align}

\subsubsection{}
Recall that we have defined in Definition~\ref{def:CXIjK} a bounded complex of Zariski sheaves $\underline{G}(X, \I^j, K^{ })$ on $X$. We denote
\begin{align}
\underline{G}(X,\I^\infty,K^{ })=\underset{j}{\operatorname{colim}}\ \underline{G}(X,\I^j,K^{ }).
\end{align} 

\begin{proposition}\label{Prop: I-cohomology-Real} Let $X$ be a scheme with a minimal dualizing complex $K$. Assume that $2$ is invertible in $X$. Then there are isomorphisms between hypercohomology groups
\begin{align}
H^i(X , \I^\infty, K)
\simeq
\mathbb{H}^{i}(\underline{G}(X,\I^\infty,K^{ })) 
\simeq
\mathbb{H}^{i}(\underline{G}(X_r , K^{ })).
\end{align} 
\end{proposition}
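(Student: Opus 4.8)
The plan is to establish the two isomorphisms separately, both by comparing the relevant complexes of Zariski (resp.\ real) sheaves term by term and then passing to hypercohomology. For the first isomorphism $H^i(X,\I^j,K)\simeq \mathbb{H}^i(\underline{C}^{RS}(X,\I^\infty,K))$, I would first recall that $H^i(X,\I^j,K)=\mathbb{H}^i(\underline{C}(X,\I^j,K))$ by definition, where $\underline{C}(X,\I^j,K)$ is the sheafification of the complex of Definition~\ref{def:CXIjK}. The transition maps $\langle\!\langle -1\rangle\!\rangle\colon I^j\to I^{j+1}$ assemble, by Lemma~\ref{lem:Icohocommutewithdiff}, into a morphism of complexes of Zariski sheaves $\underline{C}(X,\I^j,K)\to\underline{C}(X,\I^{j+1},K)$; taking the colimit over $j$ gives the complex $\underline{C}^{RS}(X,\I^\infty,K)$. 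Since filtered colimits are exact and commute with sheafification, and since on a noetherian space of finite Krull dimension filtered colimits of sheaves commute with sheaf cohomology, the hypercohomology of the colimit complex is the colimit of the hypercohomology groups. The point that needs care is that the colimit of $H^i(X,\I^j,K)$ over the maps $\langle\!\langle-1\rangle\!\rangle$ stabilizes to a single group that one is allowed to call $H^i(X,\I^j,K)$ for $j\gg 0$, or more precisely that the statement as written means this stabilized group; I would make this convention explicit, invoking that $\mu_K$ takes only finitely many values so the complex has bounded length and $I^{j-(n-m)}=I^\infty$-level terms stabilize uniformly.

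For the second isomorphism $\mathbb{H}^i(\underline{C}^{RS}(X,\I^\infty,K))\simeq \mathbb{H}^i(\underline{C}(X_r,K))$, the key input is the commutative ladder constructed in the proof of Theorem~\ref{theo:gersten-complex-real}, namely the isomorphism of complexes of \emph{abelian groups}
\[
\bigoplus_{\mu_K(x)=p} I^\infty(k(x),K_{k(x)})\;\xrightarrow{\ \sign\ }\;\bigoplus_{\mu_K(x)=p} C(k(x)_r,\Z(K_{k(x)}))
\]
whose vertical maps are isomorphisms by the Arason--Knebusch theorem (\cite[Proposition 2.7]{jacobson}). Because the signature map is compatible with restriction to open constructible subsets $U\subset X_r$ (it is defined pointwise on orderings), this upgrades to an isomorphism of the presheaves $U\mapsto C(U,\I^\infty,K)$ and $U\mapsto C(U,K)$ on the site of open constructible subsets of $X_r$; I would remark that the source presheaf here is the one computing $\I^\infty$-cohomology pulled back to $X_r$ via $\operatorname{supp}$, matching Jacobson's isomorphism~\eqref{eq:jac8.6intro} at the level of stalks. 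Sheafifying both sides yields an isomorphism of complexes of sheaves on $X_r$, and since by construction the sheafification of $U\mapsto C(U,K)$ is precisely $\underline{C}(X_r,K)$ (Definition following Theorem~\ref{theo:gersten-complex-real}), the two complexes of sheaves on $X_r$ agree; taking hypercohomology finishes this step. As noted in the text after the second introductory theorem, $\underline{C}(X_r,K)$ is a complex of acyclic (flasque-type, being pushforwards $(i_{x_r})_*\Z$) sheaves, so its hypercohomology is the cohomology of the complex of global sections, which is $C(X_r,K)$, reconciling the two descriptions.

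I expect the main obstacle to be bookkeeping around the two notions of sheafification and the identification of the Zariski-sheaf side with the real-scheme side: one must check that sheafifying on $X$ in the Zariski topology and then applying $\operatorname{supp}_*$, versus sheafifying directly on $X_r$, give the same answer, which relies on $\operatorname{supp}\colon X_r\to X$ being continuous and the real spectra $k(x)_r$ being exactly the fibers. Concretely, I would verify that $(i_{x_r})_*\Z$ on $X_r$ is the stalk of the sheafification at the generic behaviour dictated by $x$, i.e.\ that the Zariski sheafification of $x\mapsto I^\infty(k(x),K_{k(x)})$ pushes forward under $\operatorname{supp}$ to the real sheafification of $x\mapsto C(k(x)_r,\Z(K_{k(x)}))$; this is essentially Jacobson's identification $\I^\infty\simeq\operatorname{supp}_*\Z$ applied fiberwise and twisted by the one-dimensional spaces $K_{k(x)}$. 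Once this compatibility is in hand, everything else is a formal consequence of exactness of filtered colimits and sheafification together with the term-by-term isomorphisms already established; no substantial new computation is required beyond what Theorem~\ref{theo:gersten-complex-real} and \cite[Proposition 2.7]{jacobson} provide.
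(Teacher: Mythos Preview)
Your overall strategy matches the paper's: both arguments rest on the signature ladder from Theorem~\ref{theo:gersten-complex-real}, which gives a termwise isomorphism between the $\I^\infty$-level Rost--Schmid complex and the real Gersten complex via Arason--Knebusch. Your handling of the first isomorphism (via the filtered colimit over $j$ and stabilization, using that $\mu_K$ has bounded range) is actually more explicit than the paper's one-line reference.

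The difference, and the place where your argument is not yet complete, is the passage between Zariski hypercohomology on $X$ and hypercohomology on $X_r$. The paper records the termwise signature isomorphism as an isomorphism of complexes of \emph{Zariski} sheaves
\[
\underline{C}^{RS}(X,\I^\infty,K)\;\simeq\;\operatorname{supp}_*\underline{C}(X_r,K),
\]
and then uses that $\operatorname{supp}_*$ is exact (\cite[Theorem~19.2]{Sch}) so that the Leray-type hypercohomology spectral sequence collapses, as in \cite[Lemma~4.6]{jacobson}. You instead try to sheafify both sides on $X_r$ and appeal to acyclicity of the $(i_{x_r})_*\Z(K_{k(x)})$; but this only computes $\mathbb{H}^i(\underline{C}(X_r,K))$, while $\mathbb{H}^i(\underline{C}^{RS}(X,\I^\infty,K))$ is Zariski hypercohomology on $X$. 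The fiberwise identification $\I^\infty\simeq\operatorname{supp}_*\Z$ that you invoke gives you the isomorphism of sheaves displayed above, but not, by itself, the comparison of hypercohomologies on the two sites. Your acyclicity route can be salvaged: one simply adds the observation that the Zariski terms $(i_x)_*I^\infty(k(x),K_{k(x)})$ are themselves flasque on $X$, so that both hypercohomologies reduce to cohomology of the same complex of global sections. Either that, or cite exactness of $\operatorname{supp}_*$ as the paper does; without one of these two ingredients the ``main obstacle'' you correctly identify is not actually closed.
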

\begin{proof}
The first isomorphism is established in \ref{theorem:Gille-Schmid}. By the proof of Theorem \ref{theo:gersten-complex-real}, there is an isomorphism of complexes $\underline{G}(X,\I^\infty, K) \simeq \support_* \underline{G}(X_r, \Z, K)$. Since the functor $\support_*$ is exact (\cite[Theorem 19.2]{Sch1}), the second isomorphism follows from \cite[Lemma 4.6]{jacobson} by replacing the Grothendieck spectral sequence in \textit{loc. cit.} with the hypercohomology spectral sequence.
\end{proof}

\section{Scheiderer's theorems revisited}
\label{sec:realduality}
The goal of this section is to prove some generalizations of the results in \cite{Sch1}. 
\subsection{Six functors and purity theorems}
\subsubsection{}
We first recall the six functors on real schemes. For any scheme $X$, we denote by $D(X_r)$ the derived category of sheaves of abelian groups on the real scheme $X_r$. By \cite[Theorem 35]{Bac},  there is an equivalence of categories
\begin{align}
\label{eq:reteq}
D(X_r)\simeq D_{\mathbb{A}^1}(X)[\rho^{-1}]
\end{align}
between the derived category of $X_r$ and the $\rho$-inverted $\mathbb{A}^1$-derived category of $X$; in addition, the association $X\mapsto D(X_r)$ defines a \emph{motivic triangulated category} in the sense of \cite[Definition 2.4.45]{CD}. By \cite[Theorem 2.4.50]{CD}, we have the corresponding six functors formalism on the fibered category $X\mapsto D(X_r)$, and the equivalence~\eqref{eq:reteq} is compatible with the six functors. In particular, if $f:X\to Y$ is a separated morphism of schemes of finite type, there is a pair of adjoint functors given by the (derived) exceptional direct image and inverse image functors
\begin{align}
Rf_{r!}:D(X_r)\rightleftharpoons D(Y_r):f^!_r.
\end{align}
These functors are also have more explicit descriptions: for the induced map $f_r:X_r\to Y_r$ between the underlying real schemes, the functor $Rf_{r!}$ is the derived functor of the direct image with compact support functor on sheaves
\begin{align}
f_{r!}:Sh(X_r)\to Sh(Y_r)
\end{align}
(\cite[II.8]{Del}, \cite[3.1]{EP}). If $f=p\circ j$ is a compactification of $f$, that is, $p$ is a proper morphism and $j$ is an open immersion (\cite{Con}), then $f_{r!}=p_{r*}\circ j_{r!}$, where $j_{r!}$ is the extension by zero functor. 

\subsubsection{}
Recall the following general definition of Thom spaces in motivic categories (\cite[Example 2.4.3.1]{CD}):
\begin{definition}
Let $\mathcal{T}$ be a motivic triangulated category. Let $p:V\to X$ be the projection of a vector bundle over a scheme $X$, with $s:X\to V$ the zero section. The ($\mathcal{T}$-valued) \textbf{Thom space} $Th(V)\in\mathcal{T}(X)$ is defined as $Th(V)=Rp_!Rs_*Ls^*p^!\mathbbold{1}_X$, where $\mathbbold{1}_X\in\mathcal{T}(X)$ is the unit object.
\end{definition}
As a consequence of the six functors formalism (\cite[Theorem 2.4.50]{CD}), we have the following properties:
\begin{lemma}
\begin{enumerate}
\item (Localization)
If $i:Z\to X$ is a closed immersion with open complement $j:U\to X$, then there is a canonical distinguished triangle of endofunctors of $D(X_r)$:
\begin{align}
Ri_{r*}i^!_r\xrightarrow{}1\xrightarrow{}Rj_{r*}j^!_r\xrightarrow{+1}Ri_{r*}i^!_r[1].
\end{align}
\item (Projection formula)
There is a canonical isomorphism
\begin{align}
R\underline{Hom}(Rf_{r!}\mathcal{F},\mathcal{G})
\simeq 
Rf_{r*}R\underline{Hom}(\mathcal{F},f_r^!\mathcal{G}).
\end{align}
\item (Relative purity)
If $f:X\to Y$ is a smooth morphism with tangent bundle $T_f$, then 
there is an isomorphism of functors
\begin{align}
\label{eq:relpurity}
Th(T_f)\overset{}{\otimes}f^*_r(-)\simeq f^!_r(-).
\end{align}
\end{enumerate}
\end{lemma}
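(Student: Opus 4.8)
The plan is to obtain all three statements as formal consequences of the six functors formalism on the motivic triangulated category $X\mapsto D(X_r)$; by \cite[Theorem 35]{Bac} one has $D(X_r)\simeq D_{\mathbb{A}^1}(X)[\rho^{-1}]$, and as recalled above the assignment $X\mapsto D(X_r)$ is a motivic triangulated category in the sense of \cite[Definition 2.4.45]{CD}, so that the full package of \cite[Theorem 2.4.50]{CD} applies verbatim. No input specific to real schemes is needed.

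For (1), I would invoke the localization property that is part of the axioms: for a closed immersion $i$ with open complement $j$ one has, besides $Rj_{r!}j^*_r\to 1\to Ri_{r*}i^*_r\xrightarrow{+1}$, the companion gluing triangle $Ri_{r*}i^!_r\to 1\to Rj_{r*}j^*_r\xrightarrow{+1}$ of endofunctors of $D(X_r)$. Since $j$ is an open immersion, $j^!_r=j^*_r$, so substituting this into the companion triangle gives exactly the stated distinguished triangle.

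For (2), I would run a Yoneda argument starting from the tensor projection formula $Rf_{r!}(f^*_r\mathcal{H}\otimes\mathcal{F})\simeq \mathcal{H}\otimes Rf_{r!}\mathcal{F}$, which is part of the formalism. For any $\mathcal{H}\in D(Y_r)$ there are natural isomorphisms
\begin{align*}
\Hom\bigl(\mathcal{H},\,R\underline{Hom}(Rf_{r!}\mathcal{F},\mathcal{G})\bigr)
&\simeq \Hom\bigl(\mathcal{H}\otimes Rf_{r!}\mathcal{F},\,\mathcal{G}\bigr)
\simeq \Hom\bigl(Rf_{r!}(f^*_r\mathcal{H}\otimes\mathcal{F}),\,\mathcal{G}\bigr)\\
&\simeq \Hom\bigl(f^*_r\mathcal{H}\otimes\mathcal{F},\,f^!_r\mathcal{G}\bigr)
\simeq \Hom\bigl(f^*_r\mathcal{H},\,R\underline{Hom}(\mathcal{F},f^!_r\mathcal{G})\bigr)\\
&\simeq \Hom\bigl(\mathcal{H},\,Rf_{r*}R\underline{Hom}(\mathcal{F},f^!_r\mathcal{G})\bigr),
\end{align*}
using successively the tensor--hom adjunction, the tensor projection formula, the $(Rf_{r!},f^!_r)$ adjunction, the tensor--hom adjunction, and the $(f^*_r,Rf_{r*})$ adjunction. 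The Yoneda lemma then produces the isomorphism, and one checks that it is the one induced by the evident natural transformation.

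For (3), I would simply cite the relative purity isomorphism of \cite[Theorem 2.4.50]{CD}: for a smooth morphism $f$ with tangent bundle $T_f$ one has $f^!_r(-)\simeq Th(T_f)\otimes f^*_r(-)$, where $Th(T_f)$ is the Thom space defined above. I do not anticipate a genuine obstacle in any of the three parts; the only point requiring care is that all six functors on $D(X_r)$ be read as those transported along Bachmann's equivalence (equivalently, the ones furnished by \cite[Theorem 2.4.50]{CD}), so that the abstract statements of \cite{CD} can be applied directly.
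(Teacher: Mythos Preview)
Your proposal is correct and matches the paper's approach exactly: the paper does not give a proof but simply states the lemma ``as a consequence of the six functors formalism (\cite[Theorem 2.4.50]{CD})'', relying on Bachmann's identification $D(X_r)\simeq D_{\mathbb{A}^1}(X)[\rho^{-1}]$ to place $X\mapsto D(X_r)$ in the framework of motivic triangulated categories. Your write-up merely unpacks this citation, spelling out the standard Yoneda derivation of~(2) from the tensor projection formula and noting $j^!_r=j^*_r$ for~(1); nothing beyond what the paper intends is needed.
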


\subsubsection{}
For a vector bundle $V$ over a scheme $X$, the cohomology of the Thom space computes the cohomology groups of $V$ with support in $X$. 
More precisely, for any separated morphism of finite type $p:Y\to X$, we denote by $M_X(Y)=Rp_!p^!\mathbb{Z}\in D(X_r)$ the motive of $Y$ over $X$. Then by \cite[2.4.13]{CD}, the Thom space fits into a distinguished triangle
\begin{align}
Th(V)[-1]\to M_X(V^\times)\to M_X(V)\xrightarrow{+1}Th(V)
\end{align}
where $V^\times$ is the complement in $V$ of the zero section. In the case where $V$ is the trivial bundle of rank $1$, the global section $-1$ of $V^\times=\mathbb{G}_{m,X}$ gives a map
\begin{align}
\label{eq:sec-1}
\mathbb{Z}\xrightarrow{-(-1)_*} M_X(\mathbb{G}_{m,X})
\end{align}
which factors through $Th(\Oo_X)[-1]$ and induces a map
\begin{align}
\label{eq:Thomstr}
\rho:\mathbb{Z}\to Th(\Oo_X)[-1].
\end{align}
The map~\eqref{eq:Thomstr} is an isomorphism, since the category $D(X_r)$ is $\rho$-stable by \cite[Theorem 35]{Bac}.
%
More generally, the Thom space of the trivial bundle can be computed as
\begin{align}
\label{eq:Thomtriv}
Th(\Oo^d_X)
\simeq
Th(\Oo_X)^{\oplus d}
\overset{\eqref{eq:Thomstr}}{\simeq}
\mathbb{Z}[d].
\end{align}
More generally, we have the following result:
\begin{lemma}
\label{lm:Thdet}
Let $X$ be a scheme and let $V$ be a vector bundle of rank $d$ over $X$. Then in $D(X_r)$ there is an isomorphism
\begin{align}
\label{eq:Thdet}
Th(V)\simeq\mathbb{Z}(\operatorname{det}(V))[d]
\end{align}
which is compatible with pullbacks and short exact sequences of vector bundles.
\end{lemma}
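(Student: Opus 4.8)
The plan is to reduce the statement to the already-known trivial-bundle case \eqref{eq:Thomtriv} by a standard torsor/local-triviality argument, carefully tracking the determinant twist. The key point is that the Thom space construction is local on $X$ in an appropriate sense, and that the twisted sheaf $\mathbb{Z}(\det(V))$ is precisely the gadget that records the transition functions of $V$ through the sign character $\mathcal{O}_X^\times\to\support_*\{\pm 1\}$ of Lemma~\ref{lem:localpatching}.

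First I would establish functoriality of $Th(-)$ in the vector bundle: an isomorphism of vector bundles $\phi\colon V\xrightarrow{\sim}V'$ over $X$ induces an isomorphism $Th(\phi)\colon Th(V)\xrightarrow{\sim}Th(V')$, and this is compatible with composition, so that $Th$ is a functor from the groupoid of rank-$d$ vector bundles on $X$ to $D(X_r)$. Next, choose a Zariski (equivalently Nisnevich) cover $\{U_\alpha\}$ of $X$ trivializing $V$, with transition isomorphisms $g_{\alpha\beta}\in GL_d(\mathcal{O}_X(U_{\alpha\beta}))$. Over each $U_\alpha$, \eqref{eq:Thomtriv} gives $Th(V)|_{(U_\alpha)_r}\simeq Th(\mathcal{O}^d_{U_\alpha})\simeq\mathbb{Z}[d]$, and on overlaps the two trivializations differ by $Th(g_{\alpha\beta})$. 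The content of the lemma is then the identification of the automorphism of $\mathbb{Z}[d]\in D((U_{\alpha\beta})_r)$ induced by $Th(g_{\alpha\beta})$ with multiplication by $\sign(\det g_{\alpha\beta})$, which is a locally constant function $(U_{\alpha\beta})_r\to\{\pm 1\}$ by Lemma~\ref{lem:localpatching}. Granting this, the cocycle $\{\sign(\det g_{\alpha\beta})\}$ is exactly the one defining the twisted sheaf $\mathbb{Z}(\det(V))$ (see~\ref{sec:conventionsch}(d),(e)), so gluing the local isomorphisms yields the canonical isomorphism \eqref{eq:Thdet}.

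To verify that claim about $Th(g_{\alpha\beta})$, I would reduce further. The group $GL_d$ over the base (which we may take to be local, or even a field, after the reductions in~\ref{sec:2invert}) is generated up to the sign of the determinant by elementary matrices and permutation/scaling matrices; since elementary matrices are connected to the identity through $\mathbb{A}^1$-homotopies, and $D(X_r)$ is an $\mathbb{A}^1$-invariant ($\rho$-inverted) motivic category, $Th$ sends them to the identity. One is then reduced to a single diagonal automorphism $\mathrm{diag}(u,1,\dots,1)$ with $u\in\mathcal{O}_X^\times$, i.e.\ to the $d=1$ case: multiplication by a unit $u$ on the trivial line bundle induces on $Th(\mathcal{O}_X)\simeq\mathbb{Z}[1]$ the automorphism $\sign(u)$. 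This last fact is essentially the definition of how $\mathcal{O}_X^\times$ acts on $\mathbb{Z}$ over the real scheme, and can be checked over the real spectrum of a field, where it follows from the identification of $Th(\mathcal{O})$ with the Tate twist and the computation of the action of $\mathbb{G}_m$ after $\rho$-inversion (\cite{Bac}).

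The main obstacle I expect is making the gluing canonical and coherent: one needs the local isomorphisms $Th(V)|_{(U_\alpha)_r}\simeq\mathbb{Z}(\det V)|_{(U_\alpha)_r}$ to satisfy the cocycle condition strictly, not just up to homotopy, so that they descend to a genuine isomorphism in $D(X_r)$ rather than merely an abstract (non-canonical) one. The cleanest way around this is probably to avoid \v{C}ech gluing in the homotopy category altogether and instead give a direct construction: use the projection formula and the formula $Th(V)=Rp_!Rs_*Ls^*p^!\mathbbold{1}$ together with the fact that, after $\rho$-inversion, $p^!\mathbbold{1}\simeq p^*\mathbbold{1}(\det V)[2d]$-type absolute purity for the bundle projection $p\colon V\to X$ — reducing \eqref{eq:Thdet} to the analogous (known) identity for the relative dualizing complex twisted down to the real scheme, where the determinant appears functorially. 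Either route works; the torsor argument is more elementary, the six-functor argument more robust, and I would present whichever matches the paper's established toolkit most directly.
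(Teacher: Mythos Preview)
Your approach is correct and close in spirit to the paper's, but the order of reduction is reversed, and that difference matters for exactly the obstacle you flag at the end. The paper first reduces the general rank-$d$ case to the line-bundle case \emph{globally}, by invoking the $SL$-orientation on $D(X_r)$ (\cite[Theorem~5.3]{Ana}): since $H^0(X_r,\mathbb{Z})$ is a Zariski sheaf, one has a canonical isomorphism $Th(V)\simeq Th(\det V)[d-1]$ for any $V$, with no gluing required. Only then does it run the \v{C}ech gluing argument, and only for line bundles, where both $Th(L)[-1]$ and $\mathbb{Z}(L)$ are honest sheaves rather than complexes, so the cocycle check (your diagram for $\mathrm{diag}(u)$, which is the paper's diagram~\eqref{eq:diagThtriv}) suffices to produce a genuine morphism of sheaves, not just a family of local isomorphisms in the homotopy category.

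Your route---glue for arbitrary rank and then reduce the transition cocycle $Th(g_{\alpha\beta})$ to $\sign(\det g_{\alpha\beta})$ via $\mathbb{A}^1$-contractibility of elementary matrices---is morally the same content as the $SL$-orientation (that is essentially what Ananyevskiy's theorem packages), but it leaves you with the coherence problem you identify. The paper's ordering buys you exactly that: the $SL$-orientation step is canonical and global, and the residual gluing happens at the level of sheaves where descent is automatic. If you prefer to keep your ordering, your suggested fix via six functors would work, but the cleanest presentation is to cite the $SL$-orientation first and then do the rank-one case by hand.
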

\proof
We first deal with the case where $V=L$ is a line bundle.
Since both sides are Zariski sheaves, we will construct an isomorphism
\begin{align}
\label{eq:ThLB}
Th(L)\simeq\mathbb{Z}(L)[1]
\end{align}
by gluing local isomorphisms. Let $X=\bigcup_i U_i$ be an open cover of $X$ such that the restriction of $L$ to each $U_i$ is trivial. Then for each $i$ there is an isomorphism
\begin{align}
\phi_i:Th(L)_{|U_i}\simeq Th(\Oo_{U_i})\overset{\eqref{eq:Thomstr}}{\simeq}\mathbb{Z}[1]\simeq\mathbb{Z}(L_{|U_i})[1].
\end{align}
To show that the $\phi_i$'s glue into a global isomorphism, it suffices to show that $\phi_{i|U_i\cap U_j}=\phi_{j|U_i\cap U_j}$. 
Let $U=U_i\cap U_j$. By considering the transition maps attached to the local trivializations, it suffices to show that for any $u\in\mathcal{O}^\times(U)$, the following diagram commutes
\begin{align}
\label{eq:diagThtriv}
\begin{split}
  \xymatrix{
    Th(\Oo_{U})[-1]  \ar[d]_-{\times u} & \mathbb{Z} \ar[r]^-{\sim} \ar[l]_-{\sim}^-{\eqref{eq:Thomstr}} \ar[d]^-{v} & \mathbb{Z}\otimes_{\Z[\support^*\Oo_{U}^\times]} \mathbb{Z}[\support^*(L_U)^\times] \ar[d]^-{u}\\
    Th(\Oo_{U})[-1]  & \mathbb{Z} \ar[r]^-{\sim} \ar[l]_-{\sim}^-{\eqref{eq:Thomstr}} & \mathbb{Z}\otimes_{\mathbb{Z}[\support^*\Oo_{U}^\times]}\mathbb{Z}[\support^*(L_U)^\times].
  }
\end{split}
\end{align}
To see this, consider the middle vertical map $v\in \operatorname{End}(\mathbb{Z})=C(U_r,\mathbb{Z})$ given by sending $(x,P)$, where $x\in U$ and $P$ is an ordering on the residue field of $x$, to $\sign_P(u_x)\in\{\pm1\}$. We need to show that both squares of the diagram~\eqref{eq:diagThtriv} commute with this choice of $v$. The one on the right commutes by construction, and the one on the left reduces to the commutativity of the following diagram
\begin{align}
\label{eq:diagThtriv}
\begin{split}
  \xymatrix{
    M_U(\mathbb{G}_{m,U}) \ar[d]_-{\times u} & \mathbb{Z} \ar[l]_-{\eqref{eq:sec-1}}^-{} \ar[d]^-{v} \\
    M_U(\mathbb{G}_{m,U}) & \mathbb{Z} \ar[l]_-{\eqref{eq:sec-1}}^-{}
  }
\end{split}
\end{align}
which, by looking at stalks, follows from the topological fact that the multiplicative group of a real closed field has exactly two connected components.

In the general case, assume that $V$ is a vector bundle of rank $d$ over $X$. We know that $H^0(X_r,\mathbb{Z})=H^0(X_{ret},\mathbb{Z})$ is a Zariski sheaf, so by \cite[Theorem 5.3]{Ana} the category $D(X_r)$ has a canonical $SL$-orientation. By~\eqref{eq:Thomtriv}, for $V$ a vector bundle of rank $r$, we have an isomorphism
\begin{align}
\label{eq:Thdet2}
Th(V)
\simeq 
Th(\operatorname{det}(V))\otimes Th(\Oo^{d-1}_X)
\overset{\eqref{eq:Thomtriv}}{\simeq}
Th(\operatorname{det}(V))[d-1]
\simeq
\overset{\eqref{eq:ThLB}}{\simeq}
\mathbb{Z}(\operatorname{det}(V))[d]
\end{align}
where the first isomorphism is induced by the weak Thom class associated to the $SL$-orientation.

From the gluing procedure above, we know that the isomorphism~\eqref{eq:ThLB} is compatible with pullbacks and tensor products of line bundles. Since weak Thom class are compatible with pullbacks and short exact sequences of vector bundles, and if $0\to V_1\to V_2\to V_3\to0$ is a short exact sequence of vector bundles we have a canonical isomorphism $\operatorname{det}(V_2)\simeq\operatorname{det}(V_1)\otimes\operatorname{det}(V_3)$, we know that the isomorphism~\eqref{eq:Thdet2} is compatible with pullbacks and short exact sequences of vector bundles.
\endproof

\begin{remark}
\begin{enumerate}
\item
In the case where $X$ is regular, one can construct the isomorphism~\eqref{eq:Thdet} directly by combining \cite[Theorem 2.48]{HWXZ} and the isomorphism~\eqref{eq:jac8.6intro}.
The nature of this argument is the existence of a Gysin morphism for the zero section of $L$ (see \cite[Proposition 4.3.10 1)]{DJK}). Unfortunately for $X$ singular we do not know how to construct such a Gysin morphism from the Witt-theoretic side, as all the constructions in the literature are built on coherent Witt groups instead of the usual ones.
\item
The same arguments as in Lemma~\ref{lm:Thdet} gives an isomorphism $Th(V)\simeq\mathbb{Z}(\operatorname{det}(V)^{-1})[d]$. This reflects the fact that for any line bundle $L$ there is a canonical isomorphism $Th(L)\simeq Th(L^{-1})$, see \cite[Lemma 4.1]{Ana}, \cite[Proposition 2.2]{Ron}.
\end{enumerate}
\end{remark}

\subsubsection{}
From Lemma~\ref{lm:Thdet} and relative purity~\eqref{eq:relpurity} we deduce the following result:
\begin{theorem}[Poincar\'e duality]
\label{th:poincare}
If $f:X\to Y$ is a smooth morphism of relative dimension $d$ with tangent bundle $T_f$, there is an isomorphism of functors
\begin{align}
\label{eq:poincare}
f^*_r(-)\otimes\mathbb{Z}(\operatorname{det}(T_f))[d]\to f^!_r(-).
\end{align}

\end{theorem}

\subsubsection{}
The isomorphism~\eqref{eq:poincare} is compatible with compositions in the following sense: if $V\xrightarrow{g}X\xrightarrow{f}Y$ are composable smooth morphisms of relative dimensions $d_g$ and $d_f$ respectively, then the composition
\begin{align}
\begin{split}
&g^*_rf^*_r(-)\otimes\mathbb{Z}(\operatorname{det}(T_{f\circ g}))[d_f+d_g]\\
\simeq
&g^*_r(f^*_r(-)\otimes\mathbb{Z}(\operatorname{det}(T_f))[d_f])\otimes\mathbb{Z}(\operatorname{det}(T_g))[d_g]\\
\simeq
&g^!_r(f^*_r(-)\otimes\mathbb{Z}(\operatorname{det}(T_f))[d_f])
\simeq 
g^!_rf^!_r(-)
\end{split}
\end{align}
agrees with the isomorphism~\eqref{eq:poincare} for the composition $f\circ g$. Indeed, this follows from the short exact sequence of vector bundles over $V$ 
\begin{align}
0\to g^{-1}T_f\to T_{f\circ g}\to T_g\to 0
\end{align}
(\cite[Proposition 17.2.3]{EGA4}), Lemma~\ref{lm:Thdet}, and the fact that the purity isomorphism~\eqref{eq:relpurity} is compatible with compositions (\cite[1.5]{Ayo}).

Similarly, the isomorphism~\eqref{eq:poincare} is compatible with base change in the following sense:  for any Cartesian square
\begin{align}
\begin{split}
  \xymatrix@=10pt{
    W\ar[r]^-{q} \ar[d]_-{g} & X \ar[d]^-{f} \\
    Z \ar[r]^-{p} & Y
  }
  \end{split}
\end{align}
with $f$ smooth of relative dimension $d$, then the following diagram is commutative:
\begin{align}
\begin{split}
  \xymatrix@=10pt{
    g_r^!p_r^!(-)\ar[rr]^-{\sim} \ar@{=}[d]_-{} & & g_r^*p_r^!(-)\otimes\mathbb{Z}(\operatorname{det}(T_g))[d] \ar[d]^-{\wr} & \\
    q_r^!f_r^!(-) \ar[r]^-{\sim} & q_r^!(p_r^*(-)\otimes\mathbb{Z}(\operatorname{det}(T_f))[d]) & q_r^!p_r^*(-)\otimes\mathbb{Z}(\operatorname{det}(T_g))[d]. \ar[l]_-{\sim}
  }
  \end{split}
\end{align}

\subsubsection{}
Let $i:Z\to X$ be a closed immersion between regular schemes of pure codimension $c$, and denote by $N$ the normal bundle of $i$. We know that $i$ is in particular a regular closed immersion, and by the theory of \emph{fundamental classes} (\cite[4.3.1]{DJK}), there is a natural transformation of functors
\begin{align}
\label{eq:purtrans}
i_r^*(-) \overset{}{\otimes} Th(N)^{\otimes(-1)}\to i_r^!(-)
\end{align}
called \emph{purity transformation}, which is constructed using deformation to the normal cone. By Lemma~\ref{lm:Thdet} we obtain, for every object $\mathcal{F}\in D(X_r)$, a natural transformation in $D(X_r)$
\begin{align}
\label{eq:abspur0}
i_r^*(\mathcal{F}) 
\otimes
\mathbb{Z}(\operatorname{det}(N)^{-1})[-c]
\overset{\eqref{eq:Thdet}}{\simeq}
i_r^*(\mathcal{F}) \overset{}{\otimes} Th(N)^{\otimes(-1)}
\xrightarrow{\eqref{eq:purtrans}}
i_r^!(\mathcal{F}).
\end{align}
Recall that a sheaf of abelian groups $\mathcal{F}$ on $X_r$ is \textbf{constructible} if there is a finite stratification of $X_r$ into locally closed constructible subsets $M_i$ such that $\mathcal{F}_{|M_i}$ is the constant sheaf associated to a finitely generated abelian group (\cite[Definition A.3]{Sch}). 
The following result is a refinement of \cite[Theorem 1.7]{Sch1}:
\begin{theorem}[Absolute purity]
\label{thm:abspur}
If $\mathcal{F}$ is a locally constant constructible sheaf on $X_r$, then the map~\eqref{eq:abspur0} is an isomorphism. In other words, there is an isomorphism in $D(Z_r)$
\begin{align}
\label{eq:abspur}
i^*_r\mathcal{F}\overset{}{\otimes}\mathbb{Z}(\operatorname{det}(N)^{-1})[-c]\simeq i^!_r\mathcal{F}.
\end{align}
\end{theorem}
\proof
Since $\mathcal{F}$ is a locally constant constructible sheaf, then $\mathcal{F}$ is a \emph{dualizable object} in $D(X_r)$, and by \cite[Proposition 5.4]{FHM} there is a canonical isomorphism 
\begin{align}
i^*_r\mathcal{F}\overset{}{\otimes}i^!_r\mathbb{Z}\simeq i^!_r\mathcal{F}
\end{align}
and it suffices to show that the map~\eqref{eq:abspur0} is an isomorphism when $\mathcal{F}=\mathbb{Z}$ is the constant sheaf. 

By~\ref{sec:2invert}, we may assume that all schemes have characteristic $0$. Then we follow the arguments of \cite[Appendix C]{DFJK}. If both $X$ and $Z$ are smooth over a base field, then the result follows from Poincar\'e duality (Theorem~\ref{th:poincare}). In the general case, since the problem is Zariski local, we may also assume that both $X$ and $Z$ are affine. By Popescu's theorem (\cite[Theorem 1.1]{Spi}), the pair of regular schemes $(X,Z)$ is a projective limit of closed pairs of smooth $\mathbb{Q}$-schemes with transverse affine transition morphisms, and the result follows from the smooth case (see \cite[Remark 4.3.12 (ii)]{DJK} for more details).
\endproof



\begin{remark}

Theorem~\ref{thm:abspur} refines Scheiderer's absolute purity theorem (\cite[Theorem 1.7]{Sch1}) by giving a precise global identification of the object $i^!_r\mathcal{F}$ (instead of merely locally isomorphic to $i^*_r\mathcal{F}[-c]$), and also by removing the restriction to excellent schemes.

\end{remark}

\subsection{The Cousin complex}
\subsubsection{}
In 
this section we study the \emph{Cousin complex}. The following lemma is a variant of \cite[Corollary 2.2]{Sch1}, based on the style of \cite[4.2]{BO}:
\begin{lemma}
\label{lm:sch22}
Let $X$ be an excellent regular scheme, let $U$ be an open constructible subset of $X_r$ and let $\mathcal{F}$ be a locally constant constructible sheaf on $U$. Then there is a complex of abelian groups called the \textbf{Cousin complex}, denoted as $Cous(U,\mathcal{F})$
\begin{align}
\label{eq:Sch2.2}
\bigoplus_{x\in X^{(0)}}H^0(x_r\cap U,\mathcal{F}(\omega_{x/X}))
\to \bigoplus_{x\in X^{(1)}}H^0(x_r\cap U,\mathcal{F}(\omega_{x/X}))
\to\cdots \to \bigoplus_{x\in X^{(n)}}H^0(x_r\cap U,\mathcal{F}(\omega_{x/X}))
\end{align}
natural in $U$ and $\mathcal{F}$, whose $q$-th coohmology group is canonically isomorphic to $H^q(U,\mathcal{F})$ for all $q\geqslant0$.
\end{lemma}
\proof
The proof is very similar to \cite[Corollary 2.2]{Sch1}: we have the coniveau spectral sequence
\begin{align}
\label{eq:coniveauss}
E^{p,q}_1=\bigoplus_{x\in X^{(p)}\cap U}H^{p+q}_{x_r}(U,\mathcal{F})\Longrightarrow H^{p+q}(U,\mathcal{F}).
\end{align}
By absolute purity (Theorem~\ref{thm:abspur}), for any $x\in X^{(p)}\cap U$ there is a canonical isomorphism $H^{p}_{x_r}(U,\mathcal{F})\simeq H^{0}_{}(x_r\cap U,\mathcal{F}(\omega_{x/X}))$, and $H^{n}_{x_r}(U,\mathcal{F})=0$ if $n\neq p$.
Consequently the spectral sequence~\eqref{eq:coniveauss} degenerates at $E_2$, and its $E_1$-page is concentrated on the line $q=0$, which gives rise to the complex~\eqref{eq:Sch2.2}.
\endproof

\subsubsection{}
\label{num:cousmap}
In particular, Lemma~\ref{lm:sch22} defines a map of complexes
\begin{align}
\label{eq:cousmap}
\mathcal{F}(U)\to Cous(U,\mathcal{F})
\end{align}
where the map $\mathcal{F}(U)\to \bigoplus_{x\in X^{(0)}}H^0(x_r\cap U,\mathcal{F}(\omega_{x/X}))$
can be identified with
\begin{align}
\label{eq:cousmapdet}
\mathcal{F}(U)
\to 
\underset{V\subset X \textrm{open}}{\operatorname{lim}}\mathcal{F}(V_r\cap U)
\simeq
\bigoplus_{x\in X^{(0)}}H^0(x_r\cap U,\mathcal{F})
=
\bigoplus_{x\in X^{(0)}}H^0(x_r\cap U,\mathcal{F}(\omega_{x/X})).
\end{align}
Here every point $x\in X^{(0)}$ is a generic point of $X$, and therefore there is a canonical trivialization of $\omega_{x/X}$.

\subsubsection{}
By Lemma~\ref{lm:sch22}, sheafifying the map~\eqref{eq:cousmap} gives the following result:
\begin{corollary}
Let $X$ be an excellent regular scheme and let $\mathcal{F}$ be a locally constant constructible sheaf on $X_r$. Then there is a resolution of $\mathcal{F}$ by acyclic sheaves:
\begin{align}
\bigoplus_{x\in X^{(0)}}(i_{x_r})_*\mathcal{F}(\omega_{x/X})
\to \bigoplus_{x\in X^{(1)}}(i_{x_r})_*\mathcal{F}(\omega_{x/X})
\to \cdots \to \bigoplus_{x\in X^{(n)}}(i_{x_r})_*\mathcal{F}(\omega_{x/X})
\end{align}
\end{corollary}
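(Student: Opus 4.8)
The plan is to deduce this corollary from the previous lemma by a standard sheafification argument, so the real content is already contained in the construction of the complex~\eqref{eq:Sch2.2}. First I would specialize the lemma: take $\mathcal{F}$ to be a locally constant constructible sheaf on all of $X_r$ (not just on a constructible open subset), and for each open constructible $U\subseteq X_r$ apply the lemma to the restriction $\mathcal{F}|_U$, producing the complex $C'(U,\mathbb{Z}(L))$ whose terms are $\bigoplus_{x\in X^{(p)}}H^0(x_r\cap U,\mathcal{F}(\omega_{x/X}))$. By the naturality clause of the lemma, $U\mapsto C'(U,\mathbb{Z}(L))$ is a complex of presheaves of abelian groups on the site of constructible opens of $X_r$; since every sheaf on $X_r$ is determined by its values on such opens (the constructible opens form a basis stable under finite intersections, cf.\ \cite[0.4]{Sch1}), sheafifying degreewise gives a complex of sheaves on $X_r$. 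The key point is to identify the sheafification of $U\mapsto H^0(x_r\cap U,\mathcal{F}(\omega_{x/X}))$ with $(i_{x_r})_*\mathcal{F}(\omega_{x/X})$, where $i_{x_r}:x_r\to X_r$ is the inclusion of the real spectrum of the residue field; this is immediate from the definition of the pushforward, since $H^0(x_r\cap U,-)=H^0(i_{x_r}^{-1}U,-)=\big((i_{x_r})_*(-)\big)(U)$ and this presheaf is already a sheaf. So the sheafified complex is exactly the asserted one.

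Next I would check that this complex of sheaves is a resolution of $\mathcal{F}$, i.e.\ exact with augmentation $\mathcal{F}\to\bigoplus_{x\in X^{(0)}}(i_{x_r})_*\mathcal{F}(\omega_{x/X})$. Exactness of a complex of sheaves can be checked on stalks, and the stalk at a point $\xi\in X_r$ lying over $x\in X$ is the colimit over constructible opens $U\ni\xi$ of $C'(U,\mathbb{Z}(L))$. On the other hand, the lemma tells us that the $q$-th cohomology of $C'(U,\mathbb{Z}(L))$ is $H^q(U,\mathcal{F})$; taking the colimit over a basis of neighbourhoods of $\xi$, the higher cohomology vanishes (colimits of cohomology over a cofinal system of opens shrinking to a point compute the stalk of the higher direct images, which vanish in the limit since $\mathcal{F}$ is a sheaf), so the colimit complex has cohomology $\mathcal{F}_\xi$ in degree $0$ and $0$ elsewhere. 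This is precisely the statement that the sheafified complex, together with its augmentation, is exact. I would phrase this cleanly by invoking the hypercohomology spectral sequence: the sheafified complex is a complex of sheaves whose hypercohomology on any constructible $U$ agrees with that of $C'(U,\mathbb{Z}(L))$, which the lemma identifies with $H^\ast(U,\mathcal{F})$; combined with degreewise acyclicity this forces the complex to be a resolution.

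Finally I would verify acyclicity of each term $\bigoplus_{x\in X^{(p)}}(i_{x_r})_*\mathcal{F}(\omega_{x/X})$. Since cohomology commutes with the direct sum here (the family $X^{(p)}$ is locally finite for the constructible topology, the closures $\overline{\{x\}}_r$ being constructible and the sum locally finite), it is enough to treat a single $(i_{x_r})_*\mathcal{F}(\omega_{x/X})$. The real spectrum $x_r=\operatorname{Sper}(k(x))$ has cohomological dimension zero for abelian sheaves in the relevant sense — more precisely, $x_r$ is a (pro-constructible) space on which every sheaf is flabby with respect to the constructible topology, equivalently the global sections functor is exact — and $i_{x_r}$ is a morphism for which $R^q(i_{x_r})_*=0$ for $q>0$; hence $(i_{x_r})_*\mathcal{F}(\omega_{x/X})$ is acyclic. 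This is exactly the acyclicity statement used by Scheiderer in \cite[Corollary 2.3]{Sch1}, so I would simply cite that computation.

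The main obstacle is bookkeeping rather than mathematics: one must be careful that sheafification of the $E_1$-terms of the coniveau spectral sequence is compatible with the differentials (so that the sheafified object really is a complex, not merely a graded object with a differential defined only up to homotopy), and that the naturality in $U$ supplied by the lemma is strong enough to make $U\mapsto C'(U,\mathbb{Z}(L))$ a genuine complex of presheaves. Both are guaranteed by the construction in the previous lemma, which produces the complex~\eqref{eq:Sch2.2} directly from the $E_1$-page of a spectral sequence that is functorial in $U$; once that is granted, the corollary is formal.
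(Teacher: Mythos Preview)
Your proposal is correct and follows the same route as the paper, which simply states ``By sheafifying the complex~\eqref{eq:Sch2.2}, one gets the following result'' without further elaboration. You have expanded the sheafification argument in full (identification of the terms, exactness on stalks via the cohomology computation of the lemma, and acyclicity via Scheiderer's \cite[Corollary~2.3]{Sch1}), which is exactly what the paper intends but leaves implicit; the only slip is notational---you write $C'(U,\mathbb{Z}(L))$ where $C'(U,\mathcal{F})$ is meant, but this is an inconsistency already present in the paper.
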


\subsubsection{}
Let $X$ be an excellent regular scheme and let $U$ be an open constructible subset of $X_r$. Let $ L$ be an invertible $\mathcal{O}_X$-module, which can be resolved by a minimal dualizing complex $K_L$ (\ref{num:regcod}). In the case where $\mathcal{F}=\mathbb{Z}(L)$, we have the following result:
\begin{lemma}
\label{lm:cousG}
For $\mathcal{F}=\mathbb{Z}(L)$, the complex $Cous(U,\mathbb{Z}(L))$ in~\eqref{eq:Sch2.2} agrees with the complex $G(U, K_L)$ in~\eqref{eq:CVK}.
\end{lemma}
\proof
We may assume $U=X_r$. For any point $x$ of $X$, 
by~\eqref{eq:lcipur} we have an isomorphism 
\begin{align}
 L_x\otimes\omega_{x/X} \simeq  L_{k(x)},
\end{align}
which induces an isomorphism $\mathbb{Z}( L)(\omega_{x/X})\simeq\mathbb{Z}( L_{k(x)})$. To check differential maps agree in each degree, we only need to check the first differential maps agree, which follows our definition of differentials in~\ref{num:betadiff} and \cite[Lemma 4.9]{jacobson} (see also \cite[Proposition 2.6]{Sch1}), which finishes the proof.
\endproof
By sheafifying we obtain the following result, generalizing \cite[Corollary 2.3]{Sch1}:
\begin{corollary}
\label{cor:sch2.3}
Let $X$ be an excellent regular scheme and let $ L$ be an invertible $\mathcal{O}_X$-module. Then the Gersten complex $\underline{G}(X_r, K_L)$ in~\eqref{eq:def_CXrK} is a resolution of the sheaf $\mathbb{Z}( L)$ by acyclic sheaves.
\end{corollary}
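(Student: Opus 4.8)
The plan is to derive Corollary~\ref{cor:sch2.3} by sheafifying the preceding lemma, exactly in the spirit of how \cite[Corollary 2.3]{Sch1} is deduced from \cite[Corollary 2.2]{Sch1}. The input is the previous lemma, which identifies $C'(U,\mathbb{Z}(\mathcal{L}))$ with $C(U,\mathcal{L})$ for every open constructible subset $U$ of $X_r$, together with the earlier lemma producing the complex $C'(U,\mathbb{Z}(\mathcal{L}))$ from the coniveau spectral sequence and computing its cohomology as $H^q(U,\mathbb{Z}(\mathcal{L}))$.

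First I would recall that the open constructible subsets of $X_r$ form a basis for the topology, so sheafifying the presheaf $U\mapsto C(U,\mathcal{L})$ (equivalently $U\mapsto C'(U,\mathbb{Z}(\mathcal{L}))$) is well-defined and term-by-term produces the complex $\underline{C}(X_r,\mathcal{L})$ of~\eqref{eq:def_CXrK}: indeed the degree-$p$ term $\bigoplus_{x\in X^{(p)}}H^0(x_r\cap U,\mathbb{Z}(\mathcal{L})(\omega_{x/X}))$ sheafifies to $\bigoplus_{x\in X^{(p)}}(i_{x_r})_*\mathbb{Z}(\mathcal{L}(\omega_{x/X}))$, and by the fundamental local isomorphism (used already in the proof of the preceding lemma) $\mathbb{Z}(\mathcal{L})(\omega_{x/X})\simeq\mathbb{Z}(\mathcal{L}_{k(x)})=\mathbb{Z}(K_{k(x)})$ with $K=\mathcal{L}$, matching the terms of~\eqref{eq:def_CXrK} with $K=\mathcal{L}$. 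The differentials agree because they agree before sheafification.

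Next I would check exactness of $\underline{C}(X_r,\mathcal{L})$ in positive degrees and compute $\mathcal{H}^0$. Exactness and the value of $\mathcal{H}^0$ can be verified on stalks, or more cleanly by observing that sheaf cohomology commutes with the relevant filtered colimits: for a point $\xi=(x,P)\in X_r$ the stalk of each term is the colimit over open constructible $U\ni\xi$ of the corresponding $H^0$, and since the preceding lemma gives that the $q$-th cohomology of $C'(U,\mathbb{Z}(\mathcal{L}))$ is $H^q(U,\mathbb{Z}(\mathcal{L}))$ naturally in $U$, taking the colimit over a cofinal system of neighborhoods of $\xi$ yields $H^q$ of a basis of neighborhoods, which vanishes for $q>0$ (small enough constructible neighborhoods in $X_r$ have trivial higher cohomology, e.g.\ they are — up to the real spectrum formalism — of the homotopy type needed for this, cf.\ \cite{Sch1}) and equals the stalk $\mathbb{Z}(\mathcal{L})_\xi$ for $q=0$. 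Hence $\mathcal{H}^0(\underline{C}(X_r,\mathcal{L}))\simeq\mathbb{Z}(\mathcal{L})$ and $\mathcal{H}^q(\underline{C}(X_r,\mathcal{L}))=0$ for $q>0$, so the augmented complex $\mathbb{Z}(\mathcal{L})\to\underline{C}(X_r,\mathcal{L})$ is a resolution.

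Finally I would record acyclicity of the terms: each term $\bigoplus_{x\in X^{(p)}}(i_{x_r})_*\mathbb{Z}(\mathcal{L}(\omega_{x/X}))$ is acyclic because $i_{x_r}:x_r\to X_r$ is the inclusion of the real spectrum of a field, $\mathbb{Z}(\mathcal{L}(\omega_{x/X}))$ is locally constant on $x_r$, and the pushforward along such an inclusion of a locally constant sheaf is acyclic on $X_r$ — this is the same acyclicity statement invoked in \cite[Corollary 2.2]{Sch1} and in~\cite{jacobson}, valid because $\support_*$ (and its pointwise variants) is exact by \cite[Theorem 19.2]{Sch1}. Combining the three steps gives the corollary. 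I expect the only genuinely delicate point to be the colimit/stalk argument for exactness, i.e.\ confirming that a cofinal family of small open constructible neighborhoods of a point of $X_r$ computes the stalk cohomology correctly; everything else is formal bookkeeping transported from the preceding two lemmas and from \cite{Sch1}.
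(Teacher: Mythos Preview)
Your proposal is correct and follows essentially the same approach as the paper: the paper derives Corollary~\ref{cor:sch2.3} by sheafifying the identification $C'(U,\mathbb{Z}(\mathcal{L}))=C(U,\mathcal{L})$, invoking the earlier corollary (itself obtained by sheafifying the coniveau complex for a general locally constant constructible $\mathcal{F}$) for the resolution and acyclicity statements. Your write-up simply unpacks that sheafification step in more detail via the stalk/colimit argument, which is exactly what underlies the paper's one-line ``by sheafifying''.
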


\newpage

\section{Functoriality of the Gersten complex}
\label{sec:GWKcomplex}




The goal of this section is to prove the following result:
\begin{theorem}
\label{th:fupper!compat}
Let $f:X\to Y$ be a quasi-projective morphism of excellent schemes and let $K^{ }$ be a dualizing complex on $Y$. Then there is an isomorphism in $D(X_r)$
\begin{align}
\label{eq:isofupper!}
\gamma_f:\underline{G}(X_r,f^!K^{ })\simeq f_r^!\underline{G}(Y_r,K^{ })
\end{align}
which is compatible with compositions. 
\end{theorem}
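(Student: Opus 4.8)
The plan is to factor the quasi-projective morphism $f$ as $f = p \circ j$, where $j : X \to \bar{X}$ is an open immersion and $p : \bar{X} \to Y$ is projective, and then reduce the projective case to the two building blocks for which the statement is already accessible: smooth morphisms (handled by Lemma~\ref{lem:CXr_smBC}) and closed immersions (handled by Theorem~\ref{thm:devissagK_Ijintro}). For a projective morphism one has a factorization $p = q \circ i$ where $i : \bar{X} \to \PP^N_Y$ is a closed immersion and $q : \PP^N_Y \to Y$ is the (smooth) projection, so in the end it suffices to treat three cases: open immersions, the smooth projection $\PP^N_Y \to Y$, and closed immersions. Since $f^!$ is compatible with composition and, by the six functors formalism recalled above, so is $f_r^!$, the isomorphisms for the pieces will glue to give~\eqref{eq:isofupper!} for $f$; one must of course check that the constructed isomorphisms are compatible with the composition isomorphisms, but this is a coherence verification rather than a new input.

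First I would dispose of the open immersion case: if $j : X \to Y$ is an open immersion then $j^! = j^*$ and $j_r^! = j_r^*$, while the complex $\underline{C}(X_r, j^*K)$ is visibly the restriction of $\underline{C}(Y_r, K)$ to $X_r$, because the points of $X$ of a given codimension function value are exactly the points of $Y$ lying in $X$ with that value, and the twisted residue/transfer differentials are defined Zariski-locally. Next, the smooth case is already Lemma~\ref{lem:CXr_smBC} once we upgrade $f^*$ to $f^!$ via Poincaré duality (Theorem~\ref{th:poincare}): we have $f^! K \simeq f^* K \otimes \mathcal{O}(\det T_f)[d]$ on the coherent side (this is the standard relation $f^! \simeq f^*(-) \otimes \omega_f[d]$ for $f$ smooth of relative dimension $d$), and $f_r^! \simeq f_r^*(-) \otimes \ZZ(\det T_f)[d]$ by Theorem~\ref{th:poincare}. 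Combining these with Lemma~\ref{lem:CXr_smBC} and the twist/shift compatibility of Lemma~\ref{lem:CXr_compat_lb} gives
\begin{align}
\underline{C}(X_r, f^! K)
\simeq \underline{C}(X_r, f^* K \otimes \OO(\det T_f)[d])
\simeq \underline{C}(X_r, f^* K)\otimes\ZZ(\det T_f)[d]
\simeq f_r^*\underline{C}(Y_r,K)\otimes\ZZ(\det T_f)[d]
\simeq f_r^!\underline{C}(Y_r,K).
\end{align}

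The crux is the closed immersion case, and this is where Theorem~\ref{thm:devissagK_Ijintro} (the Borel-Moore devissage) does the work. For a closed immersion $i : Z \to X$ the localization triangle (recalled in the lemma following the definition of Thom spaces) identifies $i_{r*}i_r^!\underline{C}(X_r,K)$ with the subcomplex of $\underline{C}(X_r,K)$ supported on $Z_r$ — more precisely, $i_r^!\underline{C}(X_r,K)$ computes the cohomology with support in $Z$. On the other hand $\underline{C}(Z_r, i^!K)$ computes, after passing through Proposition~\ref{Prop: I-cohomology-Real} (which relates the hypercohomology of $\underline{C}(-_r, -)$ to the $\I$-cohomology $H^*(-, \I^j, -)$), the group $H^*(Z, \I^j, i^!K)$, and Theorem~\ref{thm:devissagK_Ijintro} furnishes precisely the isomorphism $H^*(Z,\I^j,i^!K)\simeq H^*(X \on Z, \I^j, K)$. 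To promote this from an isomorphism on hypercohomology to an isomorphism in $D(Z_r)$, I would argue sheaf-theoretically: the devissage map $\pi_*$ of Diagram~\eqref{devissagekey}, being built from the coherent devissage isomorphisms $W^p_{\mathfrak{m}_x}(\OO_{Z,x},\pi^\flat K_x)\simeq W^p_{\mathfrak{m}_x}(\OO_{X,x},K_x)$ term by term, is already an isomorphism of complexes of sheaves $\underline{C}(Z_r, i^!K)\xrightarrow{\sim} i_r^!\underline{C}(X_r, K)$ — so no spectral sequence comparison is needed at the level of complexes, only at the level of checking that the $i_r^!$ applied to the Gersten complex of $X$ really is the support subcomplex, which follows from acyclicity of the terms $(i_{x_r})_*\ZZ(K_{k(x)})$ and the explicit description of $i_r^!$ on such sheaves.

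The main obstacle I expect is exactly this last point — making the passage from "isomorphism of hypercohomology groups for each $j$" to "isomorphism in the derived category $D(Z_r)$" fully rigorous, and in particular verifying that $i_r^!$ applied to the acyclic resolution $\underline{C}(X_r,K)$ is computed termwise by the support construction $\underline{C}(X \on Z, \I^j, K)$ after sheafification. The key technical input is that each $(i_{x_r})_*\ZZ(K_{k(x)})$ is $\Gamma$-acyclic (noted after the statement of the theorem that $H^i(X,\I^j,K)$ agrees with hypercohomology of $\underline{C}(X_r,K)$ precisely because its terms are acyclic), so $i_r^!$ of the complex can be computed without taking injective resolutions, using only the localization triangle degree by degree. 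A secondary obstacle is the bookkeeping of compatibilities: one must check that the isomorphism produced for $f = p \circ j$ is independent of the chosen compactification $\bar{X}$ and agrees with the naive one when $f$ is itself smooth or a closed immersion, which amounts to invoking the standard transitivity isomorphisms $(gf)^! \simeq f^! g^!$ on both the coherent side and the real-scheme side and tracing through that the base-change isomorphisms of Lemma~\ref{lem:CXr_smBC} are the canonical ones. These are routine but must be stated; I would relegate the most tedious of them to a remark pointing to \cite[Appendix C]{DFKJ} for the analogous motivic argument.
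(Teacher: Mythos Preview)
Your approach is essentially the paper's, with two small differences worth noting. First, the paper uses the more economical factorization of a quasi-projective morphism directly as a closed immersion followed by a smooth morphism (a locally closed immersion $X\hookrightarrow\PP^N_Y$ can be written as a closed immersion into an open $U\subset\PP^N_Y$, and $U\to Y$ is smooth), so the separate treatment of open immersions is unnecessary. Second, and more usefully for you, the paper dissolves what you call the ``main obstacle'' in the closed immersion case by reversing the order of operations: rather than first identifying $i_r^!\underline{C}(Y_r,K)$ with a support subcomplex and then comparing, the paper constructs an explicit map $i_{r*}\underline{C}(Z_r,i^!K)\to\underline{C}(Y_r,K)$ directly from the twisted transfer (Definition~\ref{Def:twisted-transfer}) on each term of the Gersten complex, and then passes to the adjoint $\underline{C}(Z_r,i^!K)\to i_r^!\underline{C}(Y_r,K)$. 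With a genuine map in hand in $D(Z_r)$, checking it is a quasi-isomorphism reduces to the hypercohomology statement over every open, which is exactly Theorem~\ref{thm:devissagK_Ij} combined with Proposition~\ref{Prop: I-cohomology-Real}; no further acyclicity or localization-triangle argument is needed. Your smooth case is identical to the paper's.
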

In other words, the Gersten complex $\underline{G}(X_r,K^{ })$ in \eqref{eq:def_CXrK} commutes with the functor $f^!$.

\subsection{Closed immersions}

\subsubsection{}
Let $i:Z\to X$ be an immersion of schemes and let $K$ be a minimal dualizing complex on $X$. Then $i^!K$ is a minimal dualizing complex on $Z$. For $x$ a point of $Z$, we consider $i(x)$ as a point of $X$. Then we have a canonical identification of $1$-dimensional $k(x)$-vector spaces
\begin{align}
\label{eq:iK}
K_{k(i(x))}=(i^!K)_{k(x)}
\end{align}
and $\mu_K(i(x))=\mu_{i^!K}(x)$.

\subsubsection{}
Let $i:Z\to X$ be a closed immersion with open complement $j:X-Z\to X$. Any open constructible subset $U$ of $X_r$ can be written as a disjoint union
\begin{align}
U=(Z_r\cap U)\coprod((X-Z)_r\cap U)
\end{align}
and by~\eqref{eq:iK} there is a canonical (split) short exact sequence of complexes
\begin{align}
\label{eq:locG}
0\to
G(Z_r\cap U, i^!K^{ })
\to
G(U, K^{ })
\to
G((X-Z)_r\cap U, j^!K^{ })
\to
0.
\end{align}
The short exact sequence~\eqref{eq:locG} is functorial with respect $U$, and therefore by sheafifying we obtain a short exact sequence of sheaves
\begin{align}
\label{eq:locGsh}
0\to
i_*\underline{G}(Z_r,i^!K^{ })
\to
\underline{G}(X_r,K^{ })
\to
j_*\underline{G}((X-Z)_r,j^!K^{ })
\to
0.
\end{align}
Since $j$ is an open immersion, we have $\underline{G}((X-Z)_r,K^{ })=j^*\underline{G}(X_r,K^{ })$. In the derived category $D(X_r)$, the sequence~\eqref{eq:locGsh} identifies the object $i_*\underline{G}(Z_r,i^!K^{ })$ with the fiber of the map 
\begin{align}
\underline{G}(X_r,K^{ })\to j_*j^*\underline{G}(X_r,K^{}).
\end{align}
By the gluing formalism (see \cite[IV Proposition 14.6]{SGA4}, \cite[Proposition 2.3.3]{CD}), we have the localizing fiber sequence
\begin{align}
i_*i^!\to 1\to j_*j^*
\end{align}
from which we deduce an isomorphism in $D(X_r)$
\begin{align}
\label{eq:Galpha}
\alpha_i:
\underline{G}(Z_r,i^!K^{ })\simeq i^!\underline{G}(X_r,K^{ }).
\end{align}
Note that the isomorphism~\eqref{eq:Galpha} is a version of Theorem~\ref{thm:devissagK_Ij} for real schemes.

\subsection{Smooth morphisms}

\subsubsection{}
Let $f:X\to Y$ be a smooth morphism of excellent schemes of relative dimension $d$ and let $K$ be a dualizing complex on $Y$. Then we have an isomorphism in $D(X_r)$
\begin{align}
\label{eq:sm*!}
\underline{G}(X,f^*K)\otimes\mathbb{Z}(\operatorname{det}(T_f))[d]
\overset{\eqref{eq:Gtwilb}}{\simeq}
\underline{G}(X,f^*K\otimes\operatorname{det}(T_f)[d])
\overset{\eqref{eq:lcipur}}{\simeq}
\underline{G}(X,f^!K).
\end{align}

\subsubsection{}
For every point $y\in Y$, consider the Cartesian square
\begin{align}
\begin{split}
  \xymatrix@=10pt{
    X_y \ar[r]^-{\tau_y} \ar[d]_-{f_y}  & X \ar[d]^-{f} \\ 
    y \ar[r]^-{i_y} & Y.
  }
\end{split}
\end{align}
Let $U$ be an open constructible subet of $X_r$. Then there is a map
\begin{align}
\label{eq:CGsm}
\begin{split}
&C(y_r\cap f_r(U), \Z({K_{k(y)}})) 
=
C(f_r((X_y)_r\cap U), \Z({K_{k(y)}}))\\
\xrightarrow{}
&C((X_y)_r\cap U, f_r^*\Z({K_{k(y)}}))
=
C((X_y)_r\cap U, \Z({(f_y)^*K_{k(y)}}))\\
\xrightarrow{\eqref{eq:cousmap}}
&Cous((X_y)_r\cap U,\Z({(f_y)^*K_{k(y)}}))
=
G((X_y)_r\cap U,(f_y)^*K_{k(y)})
\end{split}
\end{align}
where the last equality follows from Lemma~\ref{lm:cousG}. Sheafifying the map~\eqref{eq:CGsm} gives a map of complexes
\begin{align}
\label{eq:frGsh}
f_r^*(i_{y_r})_*\mathbb{Z}(K_{k(y)})
\to
(\tau_{y_r})_*\underline{G}({X_y}_r,(f_y)^*K_{k(y)}).
\end{align}
\begin{lemma}
\label{lm:shsmpb}
The map~\eqref{eq:frGsh} is an isomorphism in $D(X_r)$.
\end{lemma}
\proof
We have the following isomorphisms:
\begin{align}
\label{eq:frGshiso}
f_r^*(i_{y_r})_*\mathbb{Z}(K_{k(y)})
\simeq 
(\tau_{y_r})_*f^*_{y_r}\mathbb{Z}(K_{k(y)})
=
(\tau_{y_r})_*\mathbb{Z}((f_y)^*K_{k(y)})
\simeq
(\tau_{y_r})_*\underline{G}({X_y}_r,(f_y)^*K_{k(y)}).
\end{align}
where the first isomorphism is the smooth base change (\cite[16.11]{Sch}), and the last isomorphism follows from Corollary~\ref{cor:sch2.3}. It is straightforward that maps~\eqref{eq:frGsh} and~\eqref{eq:frGshiso} agree, which finishes the proof.
\endproof

\subsubsection{}
Note that $(f_y)^*K_{k(y)}$ is a line bundle on the regular scheme $X_y$, and therefore the codimension function on $X_y$ defined by $(f_y)^*K_{k(y)}$ agrees with that of the structure sheaf (\ref{num:codlb} and \ref{num:regcod}). By \cite[III 7.3 and 8.7 5)]{Har}, for every point $x$ of $X_y$ we have
\begin{align}
\label{eq:codfib}
\mu_{f^*K}(x)=\mu_{\mathcal{O}_{X_y}}(x)+\mu_K(y)
\end{align}
and
\begin{align}
\label{eq:kxfib}
((f_y)^*K_{k(y)})_{k(x)}\simeq(f^*K)_{k(x)}.
\end{align}

\subsubsection{}
Let $U$ be an open constructible subet of $X_r$, and let $m$ and $n$ be the minimum and maximum of the function $\mu_K$. We define a double complex of abelian groups denoted as $D(U,f,K)$, as follows:
\begin{align}
\begin{split}
\resizebox{\textwidth}{!}{$
  \xymatrix{
    \underset{\mu_K(y)=m}{\bigoplus}\underset{x\in X_{y}^{(0)}}{\bigoplus}C(x_r\cap U, \Z({(f^*K)_{k(x)}})) \ar[r]^-{\partial_{re}} \ar[d]_-{\partial_{re}}  & \underset{\mu_K(y)=m+1}{\bigoplus}\underset{x\in X_{y}^{(0)}}{\bigoplus}C(x_r\cap U, \Z({(f^*K)_{k(x)}})) \ar[r]^-{\partial_{re}} \ar[d]_-{\partial_{re}} & \cdots \ar[r]^-{\partial_{re}} & \underset{\mu_K(y)=n}{\bigoplus}\underset{x\in X_{y}^{(0)}}{\bigoplus}C(x_r\cap U, \Z({(f^*K)_{k(x)}})) \ar[d]_-{\partial_{re}} \\ 
    \underset{\mu_K(y)=m}{\bigoplus}\underset{x\in X_{y}^{(1)}}{\bigoplus}C(x_r\cap U, \Z({(f^*K)_{k(x)}})) \ar[r]^-{-\partial_{re}} \ar[d]_-{\partial_{re}} & \underset{\mu_K(y)=m+1}{\bigoplus}\underset{x\in X_{y}^{(1)}}{\bigoplus}C(x_r\cap U, \Z({(f^*K)_{k(x)}})) \ar[r]^-{-\partial_{re}} \ar[d]_-{\partial_{re}} & \cdots \ar[r]^-{-\partial_{re}} & \underset{\mu_K(y)=n}{\bigoplus}\underset{x\in X_{y}^{(1)}}{\bigoplus}C(x_r\cap U, \Z({(f^*K)_{k(x)}})) \ar[d]_-{\partial_{re}} \\
    \vdots \ar[d]_-{\partial_{re}} & \vdots \ar[d]_-{\partial_{re}} & \ddots & \vdots \ar[d]_-{\partial_{re}} \\
    \underset{\mu_K(y)=m}{\bigoplus}\underset{x\in X_{y}^{(d)}}{\bigoplus}C(x_r\cap U, \Z({(f^*K)_{k(x)}})) \ar[r]^-{(-1)^d\partial_{re}} & \underset{\mu_K(y)=m+1}{\bigoplus}\underset{x\in X_{y}^{(d)}}{\bigoplus}C(x_r\cap U, \Z({(f^*K)_{k(x)}})) \ar[r]^-{(-1)^d\partial_{re}} & \cdots \ar[r]^-{(-1)^d\partial_{re}} & \underset{\mu_K(y)=n}{\bigoplus}\underset{x\in X_{y}^{(d)}}{\bigoplus}C(x_r\cap U, \Z({(f^*K)_{k(x)}}))
  }
$}
\end{split}
\end{align}
where
\begin{itemize}
\item The entries are given by direct sums of groups of the form $C(x_r\cap U, \Z({(f^*K)_{k(x)}}))$, for points $y\in Y$ and $x\in X_y$, arranged into a matrix according to the intergers $\mu_K(y)$ and $\mu_{\mathcal{O}_{X_y}}(x)$, the codimension of $x$ in the fiber $X_y$.
\item The vertical maps are given by the map $\partial_{re}$ in~\eqref{eq:partre}, and the horizontal maps are given by the map $(-1)^i\partial_{re}$ whenever $x$ has codimension $i$ in $X_y$, which are well-defined by~\eqref{eq:codfib}. 
\end{itemize}
This is a well-defined complex since $\partial_{re}\circ\partial_{re}=0$ by Theorem~\ref{theo:gersten-complex-real}. In addition, by~\eqref{eq:kxfib}, each column of the complex $D(U,f,K)$ is a direct sum of complexes of the form $G((X_y)_r\cap U,(f_y)^*K_{k(y)})$ (placed vertically), and the complex $D(U,f,K)$ can be written in the forllowing form:
\begin{align}
\resizebox{\textwidth}{!}{$
\underset{\mu_K(y)=m}{\bigoplus}G((X_y)_r\cap U,(f_y)^*K_{k(y)})
\xrightarrow{}
\underset{\mu_K(y)=m+1}{\bigoplus}G((X_y)_r\cap U,(f_y)^*K_{k(y)})
\xrightarrow{}
\cdots
\xrightarrow{}
\underset{\mu_K(y)=n}{\bigoplus}G((X_y)_r\cap U,(f_y)^*K_{k(y)}).
$}
\end{align}
We then have a map of double complexes
\begin{align}
\label{eq:GtoD}
G(f_r(U),K)\to D(U,f,K)
\end{align}
given by the following diagram
\begin{align}
\begin{split}
\resizebox{\textwidth}{!}{$
  \xymatrix{
    \underset{\mu_K(y)=m}{\bigoplus}C(y_r\cap f_r(U), \Z({K_{k(y)}})) \ar[r]^-{\partial_{re}} \ar[d]_-{\eqref{eq:CGsm}}  & \underset{\mu_K(y)=m+1}{\bigoplus}C(y_r\cap f_r(U), \Z({K_{k(y)}})) \ar[r]^-{\partial_{re}} \ar[d]_-{\eqref{eq:CGsm}} & \cdots \ar[r]^-{\partial_{re}} & \underset{\mu_K(y)=n}{\bigoplus}C(y_r\cap f_r(U), \Z({K_{k(y)}})) \ar[d]_-{\eqref{eq:CGsm}} \\ 
    \underset{\mu_K(y)=m}{\bigoplus}G((X_y)_r\cap U,(f_y)^*K_{k(y)}) \ar[r]^-{} & \underset{\mu_K(y)=m+1}{\bigoplus}G((X_y)_r\cap U,(f_y)^*K_{k(y)}) \ar[r]^-{} & \cdots \ar[r]^-{} & \underset{\mu_K(y)=n}{\bigoplus}G((X_y)_r\cap U,(f_y)^*K_{k(y)})
  }
$}
\end{split}
\end{align}
which is commutative by~\ref{num:cousmap}. By~\eqref{eq:codfib}, the total complex of the complex $D(U,f,K)$ is nothing but the complex $G(U,f^*K)$, and therefore the map~\eqref{eq:GtoD} gives rise to a map of complexes
\begin{align}
\label{eq:GUfupper}
G(f_r(U),K)\to \operatorname{Tot}(D(U,f,K))=G(U,f^*K).
\end{align}
By Lemma~\ref{lm:shsmpb}, sheafifying the map~\eqref{eq:GUfupper} gives an isomorphism in $D(X_r)$
\begin{align}
\label{eq:Gsmpb}
f_r^*\underline{G}(Y,K)\simeq \underline{G}(X,f^*K).
\end{align}
We deduce from the map~\eqref{eq:Gsmpb} an isomorphism in $D(X_r)$
\begin{align}
\label{eq:Gbeta}
\begin{split}
\beta_f:
&f_r^!\underline{G}(Y,K)
\overset{\eqref{eq:poincare}}{\simeq}
f_r^*\underline{G}(Y,K)\otimes\mathbb{Z}(\operatorname{det}(T_f))[d]\\
\overset{\eqref{eq:Gsmpb}}{\simeq}
&\underline{G}(X,f^*K)\otimes\mathbb{Z}(\operatorname{det}(T_f))[d]
\overset{\eqref{eq:sm*!}}{\simeq}
\underline{G}(X,f^!K).
\end{split}
\end{align}

\subsection{General case}

\subsubsection{}
\label{num:compE}
Let $S$ be a scheme and $\operatorname{QProj}_S$ be the category of quasi-projective $S$-schemes. We suppose given, for every scheme $X$ in $\operatorname{QProj}_S$, an object $E_X\in D(X_r)$. Then given two composable morphisms $X\xrightarrow{f}Y\xrightarrow{g}Z$ in $\operatorname{QProj}_S$, and two maps $\alpha:E_X\to f_r^!E_Y$, $\beta:E_Y\to g_r^!E_Z$, we denote by $\beta\cdot\alpha$ the composition
\begin{align}
\beta\cdot\alpha:E_X\xrightarrow{\alpha}f_r^!E_Y\xrightarrow{\beta}f_r^!g_r^!E_Z\simeq (g\circ f)_r^!E_Z.
\end{align}
The following lemma is a standard result in intersection theory (see \cite{FM}, \cite{DJK}):
\begin{lemma}
\label{lm:gamma}
In the setting of~\ref{num:compE}, assume given the following additional data:
\begin{enumerate}
\item
For every closed immersion $i:Z\to X$ in $\operatorname{QProj}_S$, a map 
\begin{align}
\label{eq:alphaci}
\alpha_i:E_Z\to i_r^!E_X\in D(Z_r);
\end{align}
\item
For every smooth morphism $f:Y\to X$ in $\operatorname{QProj}_S$, a map 
\begin{align}
\label{eq:betasm}
\beta_f:E_Y\to f_r^!E_X\in D(Y_r);
\end{align}
\end{enumerate}
satisfying the following relations:
\begin{enumerate}
\item
For two composable closed immersions $W\xrightarrow{k}Z\xrightarrow{i}X$ in $\operatorname{QProj}_S$, we have 
\begin{align}
\label{eq:cicomp}
\alpha_i\cdot\alpha_{k}=\alpha_{i\circ k}:E_W\to k_r^!i_r^!E_X.
\end{align}
In other words, the formation of the maps $\alpha_i$ in~\eqref{eq:alphaci} is compatible with compositions.

\item
For two composable smooth morphisms $V\xrightarrow{g}X\xrightarrow{f}Y$ in $\operatorname{QProj}_S$, we have 
\begin{align}
\label{eq:smcomp}
\beta_f\cdot\beta_{g}=\beta_{f\circ g}:E_V\to g_r^!f_r^!E_Y.
\end{align}
In other words, the formation of the maps $\beta_f$ in~\eqref{eq:betasm} is compatible with compositions.


\item
For every smooth morphism $f:X\to Y$ in $\operatorname{QProj}_S$ with a section $i:Y\to X$, 
we have
\begin{align}
\label{eq:seccmp}
\beta_f\cdot\alpha_i= id:E_Y\to E_Y. 
\end{align}

\item
For every Cartesian square of schemes in $\operatorname{QProj}_S$
\begin{align}
\begin{split}
  \xymatrix@=10pt{
    W\ar[r]^-{k} \ar[d]_-{g} & X \ar[d]^-{f} \\
    Z \ar[r]^-{i} & Y
  }
  \end{split}
\end{align}
where $i$ is a closed immersion and $f$ is smooth, 
we have 
\begin{align}
\label{eq:cscm}
\beta_f\cdot\alpha_{k}= \alpha_i\cdot\beta_{g}:E_W\to g_r^!i_r^!E_X.
\end{align}

\end{enumerate}
Then there is a unique way to associate to every morphism $f:X\to Y$ in $\operatorname{QProj}_S$ a map 
\begin{align}
\label{eq:gammaqp}
\gamma_f:E_X\to f_r^!E_Y\in D(X_r)
\end{align}
such that
\begin{enumerate}
\item
If $f$ is a closed immersion, then $\gamma_f=\alpha_f$.
\item
If $f$ is smooth, then $\gamma_f=\beta_f$.
\item
For any composable morphisms $X\xrightarrow{f}Y\xrightarrow{g}Z$ in $\operatorname{QProj}_S$, we have
\begin{align}
\label{eq:gammacomp}
\gamma_g\cdot\gamma_f\simeq \gamma_{g\circ f}:E_X\to f_r^!g_r^!E_Z.
\end{align}
\end{enumerate}
Furthermore, if all maps $\alpha_i$ in~\eqref{eq:alphaci} and all maps $\beta_f$ in~\eqref{eq:betasm} are isomorphisms, then all maps $\gamma_f$ in~\eqref{eq:gammaqp} are isomorphisms.
\end{lemma}
\proof
We know that every morphism $f:X\to Y$ in $\operatorname{QProj}_S$ is quasi-projective, and therefore factors as 
\begin{align}
\label{eq:qpfac}
X\xrightarrow{i}P\xrightarrow{g}Y,
\end{align}
where $i$ is a closed immersion and $g$ is a smooth morphism. Given such a factorization, we define 
\begin{align}
\label{eq:gamma}
\gamma_f=\beta_g\cdot\alpha_i:E_X\to f_r^!E_Y.
\end{align}
We show that the map $\gamma_f$ defined in~\eqref{eq:gamma} is independent on the choice of the factorization~\eqref{eq:qpfac}, and is compatible with compositions in the sense of~\eqref{eq:gammacomp}.

\begin{enumerate}
\item
We first show that if $f:X\to Y$ is a closed immersion that factors as $X\xrightarrow{i}P\xrightarrow{g}Y$, where $i$ is a closed immersion and $g$ is smooth, then we have
\begin{align}
\beta_g\cdot\alpha_i=\alpha_f:E_X\to f_r^!E_Y. 
\end{align}
Indeed, we have a commutative diagram in $\operatorname{QProj}_S$
\begin{align}
\begin{split}
  \xymatrix@=11pt{
    X\ar[r]^-{} \ar@{=}[rd] \ar@/^1pc/[rr]^-{i}  & X\times_YP \ar[d]^-{g_1} \ar[r]_-{f_1} & P \ar[d]^-{g}\\
    & X \ar[r]^-{f} & Y 
  }
  \end{split}
\end{align}
where $X\xrightarrow{(id,i)}X\times_YP$ is the graph of the closed immersion $i$, $f_1$ is a closed immersion and $g_1$ is smooth. Since the square is Cartesian and $(id,i)$ is a section of $g_1$, it follows our assumptions that
\begin{align}
\label{eq:cigood}
\beta_g\cdot\alpha_i
\overset{\eqref{eq:cicomp}}{=}
\beta_g\cdot\alpha_{f_1}\cdot\alpha_{(id,i)}
\overset{\eqref{eq:cscm}}{=}
\alpha_f\cdot\beta_{g_1}\cdot\alpha_{(id,i)}
\overset{\eqref{eq:seccmp}}{=}
\alpha_f
\end{align}
which proves the claim.

We now prove that the map~\eqref{eq:gamma} is independent of the factorization. 
Suppose given two factorizations of $f$: $X\xrightarrow{i}P\xrightarrow{g}Y$ and $X\xrightarrow{i_1}P_1\xrightarrow{g_1}Y$, where $i$ and $i_1$ are closed immersions and $g$ and $g_1$ are smooth. We then have a commutative diagram in $\operatorname{QProj}_S$
\begin{align}
\begin{split}
  \xymatrix@=11pt{
    X\ar[r]^-{} \ar[rd]_-{i_1}\ar@/^1pc/[rr]^-{i}  & P\times_YP_1 \ar[d]^-{q_1} \ar[r]_-{q} & P \ar[d]^-{g}\\
    & P_1 \ar[r]^-{g_1} & Y 
  }
  \end{split}
\end{align}
where $X\xrightarrow{(i,i_1)}P\times_YP_1$ is the diagonal embedding, both $q$ and $q_1$ are smooth, and the square is Cartesian. Then it follows from the claim and our assumptions that
\begin{align}
\beta_g\cdot \alpha_i
\overset{\eqref{eq:cigood}}{=}
\beta_g\cdot \beta_q\cdot\alpha_{(i,i_1)}
\overset{\eqref{eq:smcomp}}{=}
\beta_{g_1}\cdot \beta_{q_1}\cdot\alpha_{(i,i_1)}
\overset{\eqref{eq:cigood}}{=}
\beta_{g_1}\cdot \alpha_{i_1}
\end{align}
which proves the result.

\item 
We now prove that the map~\eqref{eq:gamma} is compatible with compositions.
If $f:X\to Y$ and $g:Y\to Z$ are two composable quasi-projective morphisms in $\operatorname{QProj}_S$, then there exists a commutative diagram in $\operatorname{QProj}_S$
\begin{align}
\begin{split}
  \xymatrix@=11pt{
    X\ar[r]^-{i_1} \ar[rd]_-{f} & P \ar[d]^-{p_1} \ar[r]^-{i_3} & R \ar[d]^-{p_3}\\
    & Y \ar[rd]_-{g} \ar[r]^-{i_2} & Q \ar[d]^-{p_2}\\
    & & Z
  }
  \end{split}
\end{align}
where the horizontal maps are closed immersions and the vertical maps are smooth, and the square is Cartesian (\cite[Remarks 5.1.23]{LM}). Then it follows that
\begin{align}
\gamma_{g\circ f}
=
\beta_{p_2\circ p_3}\cdot \alpha_{i_3\circ i_1}
=
\beta_{p_2}\cdot \beta_{p_3}\cdot \alpha_{i_3}\cdot \alpha_{i_1}
\overset{\eqref{eq:cscm}}{=}
\beta_{p_2}\cdot \alpha_{i_2}\cdot \beta_{p_1}\cdot \alpha_{i_1}
=
\gamma_{g}\cdot \gamma_{f}
\end{align}
which proves~\eqref{eq:gammacomp}.
\end{enumerate}
The uniqueness of the map $\gamma_f$ in~\eqref{eq:gammaqp}, as well as the last statement, are clear from the construction above.
\endproof

\subsubsection{}
We now apply Lemma~\ref{lm:gamma} to our Gersten complex $\underline{G}(X_r,K^{ })$ defined in~\eqref{eq:def_CXrK}. Let $S$ be a scheme with a dualizing complex $K$. For every object $p:X\to S$ in $\operatorname{QProj}_S$, the complex $K^!_X=p^!K$ is a dualizing complex on $X$, and we consider the object
\begin{align}
E_X=\underline{G}(X_r,K^!_X)\in D(X_r).
\end{align}
\begin{proposition}\label{prop:compatibility}
The conditions in Lemma~\ref{lm:gamma} are satisfied if we take the map $\alpha_i$ in~\eqref{eq:alphaci} to be the map defined in~\eqref{eq:Galpha}, and the map $\beta_f$ in~\eqref{eq:betasm} to be the map defined in~\eqref{eq:Gbeta}.
\end{proposition}
\proof
We check the conditions one by one:
\begin{enumerate}
\item
Let $W\xrightarrow{k}Z\xrightarrow{i}X$ be two composable closed immersions in $\operatorname{QProj}_S$, and let $U$ be an open constructible subset of $X_r$. By construction, the map 
\begin{align}
\label{eq:cipf}
G(Z_r\cap U, i^!K)\to G(U, K)
\end{align}
in~\eqref{eq:locG} is such that the composition
\begin{align}
G(W_r\cap U,k^!i^!K^!_X)\to G(Z_r\cap U, i^!K^!_X)\to G(U, K^!_X)
\end{align}
agrees with the map associated to the closed immersion $i\circ k$. By sheafification, the map \begin{align}
\label{eq:cipfsh}
i_*\underline{G}(Z_r,i^!K^!_X)\to\underline{G}(X_r,K^!_X)
\end{align}
in~\eqref{eq:locGsh} is such that the composition
\begin{align}
i_*k_*\underline{G}(W_r\cap U,k^!i^!K^!_X)\to i_*\underline{G}(Z_r\cap U, i^!K^!_X)\to \underline{G}(U, K^!_X)
\end{align}
agrees with the map associated to the closed immersion $i\circ k$. By adjunction, this proves~\eqref{eq:cicomp}.

\item Let $V\xrightarrow{g}X\xrightarrow{f}Y$ be two composable smooth morphisms in $\operatorname{QProj}_S$, and let $U$ be an open constructible subset of $V_r$. From the description of the map~\eqref{eq:CGsm} in~\eqref{eq:cousmapdet}, we see that the map $G(f_r(U),K)\to G(U,f^*K)$ in~\eqref{eq:GUfupper} is such that the composition
\begin{align}
G(f_r(g_r(U)),K^!_Y)\to G(g_r(U),f^*K^!_Y)\to G(U,g^*f^*K^!_Y)
\end{align}
agrees with the map~\eqref{eq:GUfupper} associated to the composition $f\circ g$. This shows that the isomrophism $f_r^*\underline{G}(Y,K)\simeq \underline{G}(X,f^*K)$ in~\eqref{eq:Gsmpb} is such that the composition
\begin{align}
g_r^*f_r^*\underline{G}(Y,K^!_Y)\simeq g_r^*\underline{G}(X,f^*K^!_Y)\simeq \underline{G}(V,g^*f^*K^!_Y)
\end{align}
agrees with the map~\eqref{eq:Gsmpb} associated to the composition $f\circ g$. Since the isomorphisms~\eqref{eq:poincare} and~\eqref{eq:lcipur} are compatible with compositions, we deduce that the isomorphism $\beta_f:f_r^!\underline{G}(Y,K)\simeq\underline{G}(X,f^!K)$ in~\eqref{eq:Gbeta} satisfies~\eqref{eq:smcomp}.


\item Consider a smooth morphism $f:X\to Y$ in $\operatorname{QProj}_S$ of relative dimension $d$ with a section $i:Y\to X$. Then $i$ is a regular closed immersion, and its normal bundle $N$ agrees with $i^{-1}T_f$. In~\eqref{eq:abspur0} we defined a natural transformation $i_r^*(-)\otimes\mathbb{Z}(\operatorname{det}(N)^{-1})[-c]\to i_r^!(-)$, which induces a map in $D(X_r)$
\begin{align}
\label{eq:gysinig}
i_*i^*\underline{G}(X_r,f^*K^!_Y)
\to
\underline{G}(X_r,f^*K^!_Y)\otimes\mathbb{Z}(\operatorname{det}(T_f))[d].
\end{align}
The map~\eqref{eq:gysinig} can be described as follows: for every point $y\in Y$, one has $\mu_{f^*K^!_Y}(i(y))=\mu_{K^!_Y}(y)+d$ and $K^!_{Y,k(y)}\simeq (f^*K^!_Y\otimes\operatorname{det}(T_f))_{k(i(y))}$, and therefore for every open constructible subset $U$ of $X_r$ there is a canonical inclusion
\begin{align}
\label{eq:secinc}
\underset{\mu_K(y)=t}{\bigoplus}C(y_r\cap U, \Z({K^!_{Y,k(y)}}))
\to
\underset{\mu_{f^*K^!_Y}(x)=t+d}{\bigoplus}C(x_r\cap U, \Z({(f^*K^!_Y\otimes\operatorname{det}(T_f))_{k(x)}}))
\end{align}
sending the component of $y$ to the component of $x=i(y)$. The map~\eqref{eq:secinc} induces a map of complexes
\begin{align}
G(Y_r\cap U,K^!_Y)\to G(U,f^*K^!_Y)\otimes\mathbb{Z}(\operatorname{det}(T_f))[d]
\end{align}
whose sheafification agrees with the composition
\begin{align}
\begin{split}
i_*\underline{G}(Y_r,K^!_Y)
=
i_*i^*f^*\underline{G}(Y_r,K^!_Y)
\xrightarrow{\eqref{eq:Gsmpb}}
&i_*i^*\underline{G}(X_r,f^*K^!_Y)\\
\xrightarrow{\eqref{eq:gysinig}}
&\underline{G}(X_r,f^*K^!_Y)\otimes\mathbb{Z}(\operatorname{det}(T_f))[d].
  \end{split}
\end{align}
It follows that we have a commutative diagram
\begin{align}
\begin{split}
  \xymatrix@=12pt{
  i_*i^*f^*\underline{G}(Y_r,K^!_Y) \ar@{=}[d]^-{} \ar[r]_-{\sim}^-{\eqref{eq:Gsmpb}} & i_*i^*\underline{G}(X_r,f^*K^!_Y) \ar[r]^-{\eqref{eq:gysinig}}_-{} & \underline{G}(X_r,f^*K^!_Y)\otimes\mathbb{Z}(\operatorname{det}(T_f))[d] \ar[ld]^-{\eqref{eq:sm*!}} \\
   i_*\underline{G}(Y_r,K^!_Y) \ar[r]^-{\eqref{eq:cipfsh}} & \underline{G}(X_r,f^!K^!_Y). & 
  }
  \end{split}
\end{align}
Using adjunction and the naturality of the natural transformation~\eqref{eq:abspur0}, we deduce the following commutative diagram
\begin{align}
\begin{split}
  \xymatrix@=12pt{
  i^!\underline{G}(X_r,f^!K^!_Y) \ar[rr]_-{\sim}^-{\eqref{eq:sm*!}} & & i^!\underline{G}(X_r,f^*K^!_Y)\otimes\mathbb{Z}(\operatorname{det}(N))[d] \\
   & i^*\underline{G}(X_r,f^*K^!_Y) \ar[ru]^-{\eqref{eq:abspur0}} & i^!f^*\underline{G}(Y_r,K^!_Y)\otimes\mathbb{Z}(\operatorname{det}(N))[d] \ar[u]^-{\wr}_-{\eqref{eq:Gsmpb}} \ar[d]_-{\wr}^-{\eqref{eq:poincare}} \\
  \underline{G}(Y_r,K^!_Y) \ar@{=}[r]^-{} \ar[uu]_-{\wr}^-{\eqref{eq:Galpha}} & i^*f^*\underline{G}(Y_r,K^!_Y) \ar[u]_-{\wr}^-{\eqref{eq:Gsmpb}} \ar@{=}[r]^-{} \ar[ru]^-{\eqref{eq:abspur0}} & i^!f^!\underline{G}(Y_r,K^!_Y)
  }
  \end{split}
\end{align}
which proves~\eqref{eq:seccmp}.

\item Consider a Cartesian square of schemes in $\operatorname{QProj}_S$
\begin{align}
\begin{split}
  \xymatrix@=10pt{
    W\ar[r]^-{k} \ar[d]_-{g} & X \ar[d]^-{f} \\
    Z \ar[r]^-{i} & Y
  }
  \end{split}
\end{align}
where $i$ is a closed immersion and $f$ is smooth. Let $U$ be an open constructible subset of $X_r$. We have a commutative diagram
\begin{align}
\label{eq:diagG}
\begin{split}
  \xymatrix@=11pt{
    G(g_r(W_r\cap U),i^!K^!_Y)  \ar[d]^-{}_-{\eqref{eq:GUfupper}} & G(Z_r\cap f_r(U),i^!K^!_Y) \ar[r]^-{\eqref{eq:cipf}} \ar[l]_-{\sim} & G(f_r(U),K^!_Y) \ar[d]_-{}^-{\eqref{eq:GUfupper}} \\
    G(W_r\cap U,g^*i^!K^!_Y) \ar[r]^-{\sim}  & G(W_r\cap U,k^!f^*K^!_Y) \ar[r]^-{\eqref{eq:cipf}} & G(U,f^*K^!_Y).
  }
  \end{split}
\end{align}
Indeed, since the map~\eqref{eq:cipf} is the inclusion of a subcomplex by~\eqref{eq:locG}, the commutativity follows from
the description of the map~\eqref{eq:CGsm} in~\eqref{eq:cousmapdet}. Sheafifying the diagram~\eqref{eq:diagG} gives the following commutative diagram:
\begin{align}
\begin{split}
  \xymatrix@=12pt{
    k_*g^*\underline{G}(Y_r,i^!K^!_Y)  \ar[d]^-{\wr}_-{\eqref{eq:Gsmpb}} & f^*i_*\underline{G}(Z_r,i^!K^!_Y) \ar[r]^-{\eqref{eq:cipfsh}} \ar[l]_-{\sim} & f^*\underline{G}(Y_r,K^!_Y) \ar[d]_-{\wr}^-{\eqref{eq:Gsmpb}} \\
    k_*\underline{G}(W_r,g^*i^!K^!_Y) \ar[r]^-{\sim}  & k_*\underline{G}(W_r,k^!f^*K^!_Y) \ar[r]^-{\eqref{eq:cipfsh}} & \underline{G}(Y_rf^*K^!_Y).
  }
  \end{split}
\end{align}
Tracking back the construction of the isomorphism~\eqref{eq:Gbeta} and using adjunction, since the isomorphism~\eqref{eq:poincare} is compatible with base change, we obtain the following commutative diagram
\begin{align}
\begin{split}
  \xymatrix@=12pt{
    \underline{G}_\W(k^!f^!K^!_Y) \ar[r]^-{\eqref{eq:Galpha}}_-{\sim} \ar@{=}[d]^-{} & k^!\underline{G}_Y(f^!K^!_Y) \ar[r]^-{\eqref{eq:Gbeta}}_-{\sim} & k^!f^!\underline{G}_X(K^!_Y) \ar@{=}[d]^-{}\\
    \underline{G}_\W(g^!i^!K^!_Y) \ar[r]^-{\eqref{eq:Gbeta}}_-{\sim} & g^!\underline{G}_Z(i^!K^!_Y) \ar[r]^-{\eqref{eq:Galpha}}_-{\sim} & g^!i^!\underline{G}_X(K^!_Y)
  }
  \end{split}
\end{align}
which proves~\eqref{eq:cscm}.
\end{enumerate}
\endproof
By Lemma~\ref{lm:gamma}, Theorem \ref{th:fupper!compat} is then a consequence of Proposition~\ref{prop:compatibility}, which finishes the proof.


\section{Applications}
\subsection{Biduality}
\subsubsection{}
In this section we show that the Gersten complex $\underline{G}(X_r,K^{ })$ defined in~\eqref{eq:def_CXrK} is a dualizing object. First recall the definition:
\begin{definition}
\label{def:dualizing}
Let $X$ be a scheme. The subcategory of \textbf{constructible objects} of $D(X_r)$ is the thick subcategory $D_c(X_r)$ generated by elements of the form $f_{r!}f^!_r\mathbb{Z}$, where $f:Y\to X$ is a smooth morphism.\footnote{It is known that constructible objects in $D(X_r)$ are exactly the compact objects (\cite[Lemma 2.2]{Jin}). Another equivalent characterization given by \cite[Theorem 1.4]{Jin} states that a complex $C$ in $D(X_r)$ is constructible if and only if there is a finite stratification of $X_r$ into locally closed constructible subsets such that the restriction of $C$ to each stratum is a constant sheaf associated to a perfect complex.} We say that an object $B\in D(X_r)$ \textbf{satisfies biduality} if for any constructible object $A\in D_c(X_r)$, the canonical map $$A\to R\underline{Hom}(R\underline{Hom}(A,B),B)$$ is an isomorphism. A \textbf{dualizing object} of $D(X_r)$ is a constructible object which satisifies biduality.
\end{definition}

The following lemma is a variant of \cite[Proposition 4.4.11]{CD}:
\begin{lemma}
\label{lm:CD4.4.11}
Let $X$ be a quasi-excellent scheme and let $B\in D(X_r)$. The following statements are equivalent:
\begin{enumerate}
\item For any projective morphism $f:Y\to X$, the object $f^!_rB\in D(Y_r)$ satisfies biduality;
\item For any projective morphism $f:Y\to X$ with $Y$ regular, the  canonical map 
$\mathbb{Z}\to R\underline{Hom}(f^!_rB,f^!_rB)$ is an isomorphism.
\end{enumerate}
\end{lemma}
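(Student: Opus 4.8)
The plan is to prove the equivalence of the two biduality statements by a standard d\'evissage over the triangulated category $D_c(Y_r)$ of constructible objects, reducing biduality on an arbitrary object to the single computation $\mathbb{Z}\xrightarrow{\sim} R\underline{Hom}(f^!_r\mathcal{K},f^!_r\mathcal{K})$ on regular targets. The implication (1)$\Rightarrow$(2) is immediate: taking $A=\mathbb{Z}$ in the biduality of $f^!_r\mathcal{K}$ gives exactly that $\mathbb{Z}\to R\underline{Hom}(R\underline{Hom}(\mathbb{Z},f^!_r\mathcal{K}),f^!_r\mathcal{K})=R\underline{Hom}(f^!_r\mathcal{K},f^!_r\mathcal{K})$ is an isomorphism. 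So the content is in (2)$\Rightarrow$(1), and the argument follows the pattern of \cite[Proposition 4.4.11]{CD}.

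First I would fix a projective $f\colon Y\to X$ and let $\mathcal{K}_Y:=f^!_r\mathcal{K}$; I want to show that the biduality map $A\to R\underline{Hom}(R\underline{Hom}(A,\mathcal{K}_Y),\mathcal{K}_Y)$ is an isomorphism for every $A\in D_c(Y_r)$. Since the class of $A$ for which this holds is a thick subcategory, and $D_c(Y_r)$ is generated by objects of the form $g_{r!}g^!_r\mathbb{Z}$ for $g\colon W\to Y$ smooth (Definition~\ref{def:dualizing}), it suffices to treat $A=g_{r!}g^!_r\mathbb{Z}$. Using the projection formula (Lemma, projection formula part) together with the adjunction $(g_{r!},g^!_r)$, one rewrites $R\underline{Hom}(g_{r!}g^!_r\mathbb{Z},\mathcal{K}_Y)\simeq g_{r*}R\underline{Hom}(g^!_r\mathbb{Z},g^!_r\mathcal{K}_Y)$, and then the biduality map for $A=g_{r!}g^!_r\mathbb{Z}$ becomes, after another application of these formulas, the assertion that the canonical map $g^!_r\mathbb{Z}\to R\underline{Hom}(R\underline{Hom}(g^!_r\mathbb{Z},g^!_r\mathcal{K}_Y),g^!_r\mathcal{K}_Y)$ is an isomorphism over $W$. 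Since $g$ is smooth, $g^!_r\mathbb{Z}\simeq \mathbb{Z}(\operatorname{det}(T_g))[d]$ is $\otimes$-invertible by Poincar\'e duality (Theorem~\ref{th:poincare}) and Lemma~\ref{lm:Thdet}; tensoring with its inverse, the statement reduces to $\mathbb{Z}\to R\underline{Hom}(R\underline{Hom}(\mathbb{Z},g^!_r\mathcal{K}_Y),g^!_r\mathcal{K}_Y)=R\underline{Hom}(g^!_r\mathcal{K}_Y,g^!_r\mathcal{K}_Y)$ being an isomorphism over $W$. Now $g^!_r\mathcal{K}_Y=g^!_rf^!_r\mathcal{K}=(f\circ g)^!_r\mathcal{K}$.

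The remaining point is that it is enough to check this last isomorphism after base change to a suitable regular scheme. I would argue: the map $\mathbb{Z}\to R\underline{Hom}(h^!_r\mathcal{K},h^!_r\mathcal{K})$ (for $h=f\circ g$ composed further with morphisms to $X$, a morphism that is projective up to a smooth factor, hence can be handled by factoring into projective and smooth parts as above) is an isomorphism if and only if it becomes one after pulling back along a surjective morphism from a regular scheme; by quasi-excellence of $X$ (hence of $Y$, $W$), regular alterations or resolutions provide such a cover, and one can further reduce to projective $h$ with regular source by the same smooth/projective factorization. At that stage hypothesis (2) applies directly. The main obstacle I anticipate is the bookkeeping in the d\'evissage: verifying that the generators $g_{r!}g^!_r\mathbb{Z}$ really suffice (compactness/constructibility, cf.\ the footnote to Definition~\ref{def:dualizing}), that $R\underline{Hom}$ commutes with the relevant functors on constructible objects so the reductions above are legitimate, and that one may pass freely between ``projective'' and ``projective with regular source'' using quasi-excellence without circularity. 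Once those formal points are in place, the computation itself is the single non-formal input supplied by (2).
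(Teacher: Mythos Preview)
Your d\'evissage strategy is right, and (1)$\Rightarrow$(2) is indeed immediate. The difference from the paper is in the choice of generators for $D_c(Y_r)$, and this is not merely cosmetic: it is where the real content lies.

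You work with the generators $g_{r!}g^!_r\mathbb{Z}$ for $g:W\to Y$ smooth, coming straight from Definition~\ref{def:dualizing}. After the (correct) projection-formula manipulations you arrive at the statement that $\mathbb{Z}\to R\underline{Hom}((fg)^!_r\mathcal{K},(fg)^!_r\mathcal{K})$ is an isomorphism on $W$, where $fg:W\to X$ is only quasi-projective. Hypothesis~(2), however, speaks only of \emph{projective} morphisms with \emph{regular} source, so it does not apply directly. Your proposed fix---pass to a regular alteration and ``factor into projective and smooth parts''---does not close the gap: a resolution $\pi:W'\to W$ gives $f g \pi$, which is projective$\circ$smooth$\circ$projective, still not projective in general, and checking the isomorphism by descent along $\pi$ brings you back to the same kind of statement on $W'$. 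This is the step you flag as an ``obstacle'', and it is a genuine one.

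The paper sidesteps this entirely by invoking a different system of generators: after reducing to characteristic~$0$ (via~\ref{sec:2invert}), it uses \cite[Theorem~2.4.9]{BD} to say that $D_c(Y_r)$ is the thick subcategory generated by $Rp_{r*}\mathbb{Z}$ for $p:W\to Y$ \emph{projective} with $W$ \emph{regular}. Now $f\circ p:W\to X$ is projective with regular source, so hypothesis~(2) applies immediately, and a short chain of projection-formula identities
\[
Rp_{r*}\mathbb{Z}\simeq Rp_{r*}R\underline{Hom}(p^!_rf^!_r\mathcal{K},p^!_rf^!_r\mathcal{K})\simeq R\underline{Hom}(R\underline{Hom}(Rp_{r*}\mathbb{Z},f^!_r\mathcal{K}),f^!_r\mathcal{K})
\]
finishes the proof. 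In effect, the passage from ``smooth generators'' to ``projective-from-regular generators'' is exactly the nontrivial input (resolution of singularities, packaged in \cite{BD}) that your last paragraph is trying to supply by hand.
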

\proof
It is clear that the first condition implies the second, so it suffices to show the converse. By virtue of~\ref{sec:2invert}, we may assume that all schemes have characteristic $0$. Since the scheme $Y$ is quasi-excellent, we know that $D_c(Y_r)$ agrees with the thick subcategory generated by elements of the form $Rp_{r*}\mathbb{Z}$, where $p:W\to Y$ is a projective morphism with $W$ regular (\cite[Theorem 2.4.9]{BD}). Therefore it suffices to show that the following canonical map is an isomorphism:
\begin{align}
Rp_{r*}\mathbb{Z}\to R\underline{Hom}(R\underline{Hom}(Rp_{r*}\mathbb{Z},f^!_rB),f^!_rB).
\end{align}
But by assumptions we have
\begin{align}
\begin{split}
Rp_{r*}\mathbb{Z}&\simeq Rp_{r*}R\underline{Hom}(p^!_rf^!_rB,p^!_rf^!_rB)
\simeq R\underline{Hom}(Rp_{r*}p^!_rf^!_rB,f^!_rB)\\
&\simeq R\underline{Hom}(R\underline{Hom}(Rp_{r*}\mathbb{Z},f^!_rB),f^!_rB)
\end{split}
\end{align}
which finishes the proof.
\endproof

We are now ready to prove the following theorem:
\begin{theorem}
\label{th:CXKdual}
Let $X$ be an excellent scheme and let $K$ be a dualizing complex on $X$. Then the complex $\underline{G}(X_r,K^{ })\in D(X_r)$ is a dualizing object.
\end{theorem}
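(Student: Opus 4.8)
The plan is to verify the two defining properties of a dualizing object for $\underline{C}(X_r,K^{ })$: that it is constructible, and that it satisfies biduality. Constructibility should follow quickly: each term $(i_{x_r})_*\mathbb{Z}(K_{k(x)})$ is the pushforward along a closed-point-type morphism of a locally constant constructible sheaf, hence lies in $D_c(X_r)$ by the characterization in Definition~\ref{def:dualizing} (or by the stratification criterion in the footnote there), and a bounded complex of such sheaves remains constructible since $D_c(X_r)$ is thick. So the real content is biduality.

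For biduality I would reduce to the regular case via Lemma~\ref{lm:CD4.4.11}. Take $\mathcal{K}=\underline{C}(X_r,K^{ })$; for any projective $f:Y\to X$ we have by Theorem~\ref{th:fupper!compat} that $f^!_r\underline{C}(X_r,K^{ })\simeq\underline{C}(Y_r,f^!K^{ })$, and $f^!K^{ }$ is again a dualizing complex on $Y$. Thus it suffices to check condition (2) of Lemma~\ref{lm:CD4.4.11}: for $Y$ regular and $g$ a dualizing complex, the canonical map $\mathbb{Z}\to R\underline{Hom}(\underline{C}(Y_r,g),\underline{C}(Y_r,g))$ is an isomorphism. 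When $Y$ is regular, any dualizing complex differs from $\mathcal{O}_Y$ by an invertible sheaf and a shift (\ref{num:recalldualcomp}(g)), so by Lemma~\ref{lem:CXr_compat_lb} we may reduce to $g=\mathbb{Z}(\mathcal{L})$ for a line bundle $\mathcal{L}$; then Corollary~\ref{cor:sch2.3} identifies $\underline{C}(Y_r,\mathcal{L})$ with the locally constant constructible sheaf $\mathbb{Z}(\mathcal{L})$ itself. Since $\mathbb{Z}(\mathcal{L})$ is an invertible object of $D(Y_r)$ (it is locally isomorphic to $\mathbb{Z}$, with tensor-inverse $\mathbb{Z}(\mathcal{L}^{-1})$), the endomorphism sheaf $R\underline{Hom}(\mathbb{Z}(\mathcal{L}),\mathbb{Z}(\mathcal{L}))$ is canonically $\mathbb{Z}$, and the canonical map is visibly the identity. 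This closes the argument.

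The main obstacle I anticipate is purely bookkeeping: making sure the hypotheses of Lemma~\ref{lm:CD4.4.11} and Theorem~\ref{th:fupper!compat} actually line up — in particular that $X$ excellent forces $Y$ quasi-excellent for $f$ projective (which it does, excellence being stable under finite type morphisms), and that we are allowed to pass to characteristic $0$ via~\ref{sec:2invert} so that $2$ is invertible and all the Witt-theoretic inputs (Proposition~\ref{Prop: I-cohomology-Real}, Theorem~\ref{thm:devissagK_Ij}) apply. One should also note that $\underline{C}(X_r,K^{ })$ is a bounded complex, so there are no convergence issues with the internal-Hom computations. Beyond these checks the proof is a short formal deduction, so I would present it as such: invoke Lemma~\ref{lm:CD4.4.11}, reduce to $g=\mathbb{Z}(\mathcal{L})$, apply Corollary~\ref{cor:sch2.3}, and conclude from invertibility of $\mathbb{Z}(\mathcal{L})$.
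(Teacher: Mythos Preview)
Your biduality argument is correct and matches the paper's almost verbatim: invoke Lemma~\ref{lm:CD4.4.11}, use Theorem~\ref{th:fupper!compat} to identify $f_r^!\underline{C}(X_r,K)$ with $\underline{C}(Y_r,f^!K)$, then apply Corollary~\ref{cor:sch2.3} on the regular $Y$. Your extra step of reducing explicitly to $g=\mathbb{Z}(\mathcal{L})$ via~\ref{num:recalldualcomp}(g) and Lemma~\ref{lem:CXr_compat_lb} is fine and makes the endpoint a bit more transparent.

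The constructibility argument, however, has a genuine gap. The individual terms $\bigoplus_{\mu_K(x)=p}(i_{x_r})_*\mathbb{Z}(K_{k(x)})$ of the Gersten complex are \emph{not} constructible objects of $D(X_r)$. First, for $x$ not a closed point the map $i_x:\operatorname{Spec}(k(x))\to X$ is not of finite type, so $(i_{x_r})_*$ is not one of the operations known to preserve constructibility; second, and more decisively, each degree is an \emph{infinite} direct sum (there are generally infinitely many points with a given value of $\mu_K$), and an infinite direct sum of nonzero objects is never compact. These terms are acyclic (flasque-type) sheaves, which is what makes the complex useful for computing hypercohomology, but acyclicity is unrelated to constructibility. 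So you cannot conclude constructibility of the total complex from thickness of $D_c(X_r)$.

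The paper instead argues by noetherian induction: reduce to $X$ integral, use excellence to find a dense regular open $j:U\hookrightarrow X$ with closed complement $i:Z\hookrightarrow X$, and use the localization triangle together with Theorem~\ref{th:fupper!compat} to write $\underline{C}(X_r,K)$ as an extension of $Rj_{r*}\underline{C}(U_r,j^!K)$ by $Ri_{r*}\underline{C}(Z_r,i^!K)$. The first is constructible by Corollary~\ref{cor:sch2.3} (and preservation of constructibility under $Rj_{r*}$), the second by induction. This is the step you are missing.
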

\proof
First we show that the complex $\underline{G}(X_r,K^{ })\in D(X_r)$ is constructible. We use noetherian induction on $X$. By working with each irreducible component, we may assume that $X$ is integral. Since $X$ is in addition quasi-excellent, there is a non-empty open immersion $j:U\to X$ such that $U$ is regular. Let $i:Z\to X$ be its reduced complement. By localization and Theorem~\ref{th:fupper!compat}, there is a distinguished triangle in $D(X_r)$
\begin{align}
Ri_{r*}\underline{G}(Z_r,i^!K)
\to
\underline{G}(X_r,K^{ })
\to
Rj_{r*}\underline{G}(U_r,j^!K)
\to
Ri_{r*}\underline{G}(Z_r,i^!K)[1].
\end{align}
By Corollary~\ref{cor:sch2.3}
, we know that $\underline{G}(U_r,j^!K)$ is constructible, and by noetherian induction $\underline{G}(Z_r,i^!K)$ is constructible. Since both $Ri_{r*}$ and $Rj_{r*}$ preserve constructible objects (\cite[Theorem 2.4.9]{BD}), this shows that $\underline{G}(X_r,K^{ })$ is constructible.

It remains to show that $\underline{G}(X_r,K^{ })$ satisfies biduality. By Lemma~\ref{lm:CD4.4.11} it suffices to show that for any projective morphism $f:Y\to X$ with $Y$ regular, the following canonical map is an isomorphism:
\begin{align}
\label{eq:Zbidualfupper}
\mathbb{Z}\to R\underline{Hom}(f^!_r\underline{G}(X_r,K^{ }),f^!_r\underline{G}(X_r,K^{ })).
\end{align}
But by Theorem~\ref{th:fupper!compat} we have $f^!_r\underline{G}(X_r,K^{ })\simeq \underline{G}(Y_r,f^!K^{ })$. Since $Y$ is regular, the map~\eqref{eq:Zbidualfupper} is an isomorphism by Corollary~\ref{cor:sch2.3}
, which finishes the proof.
\endproof

\subsection{Borel-Moore real homology theory}
\label{sec:realkun}
\subsubsection{}
Let $S$ be a regular scheme, $f:X\to S$ be a quasi-projective morphism. Then, $f \colon X\to S$ induces a map $f_r \colon X_r \to S_r$ of real schemes. If $L$ is a line bundle on $X$, we have an associated locally constant sheaf $\Z(L)$ on $X_r$ as in \cite{HWXZ}. 

\begin{definition}
\label{def:BMhom}
 Define the \textbf{Borel-Moore real homology theory} of $f_r : X_r \rightarrow S_r$ as the hypercohomology
		\[ H^{\mathrm{BM}}_n(X_r, L) : =  \mathbb{H}^{-n}  \underline{G}\big( X_r , f^! K_S  \otimes  L \big) \]	
\end{definition}
\begin{remark}\label{rmk:BMiso}
We have the following identifications
	\[H_n^{\mathrm{BM}}(X_r, L) \cong \mathbb{H}^{-n} (f_r^!\Z_{S_r} \otimes \mathbb{Z}(L)) \]
	where $\underline{G}(X_r, f^!K_S) \simeq f_r^!\underline{G}(S, K_S) $ is a flasque resolution of $ f_r^!\Z_{S_r}$ by Corollary~\ref{cor:sch2.3}, 
	\ref{num:Gerlb} and Theorem \ref{th:fupper!compat}. Here, $\Z_{S_r}$ is the constant sheaf on $S_r$.\ If $S = \Spec (R)$ and $R$ be a real closed field, we have
	\[H_n^{\mathrm{BM}}(X_r, L)  \cong H^{\mathrm{BM}}_n(X(R),\mathbb{Z}(L))\]
	where the right hand side is the semi-algebraic Borel-Moore homology (\cite[III \S2]{Del}), and $X(R)$ is the semialgebraic space defined by the $R$-points of $X$ (\cite[I Example 1.1]{Del}). This can be seen by \cite[III 12.10]{Del}. It is worth to mention that, in the special case $S = \Spec (\R)$, we get the classical Borel-Moore homology (\cite{BM}, \cite{Iver}, \cite{Bre}). 
\end{remark}

\begin{definition} 
Define the \textbf{$\mathrm{I}$-homology theory} as 
	\begin{align}
	H_{n}(X,\mathrm{I}^\infty(L))=
		H^{-n}(X,\mathrm{I}^\infty, f^!K_S \otimes L). 
\end{align}
\end{definition}

\begin{proposition}\label{coro:signature-borel-moore}
	Let $X$ be a quasi-projective $\R$-scheme. Let $L$ be a line bundle on $X$. Then the signature morphism 
	\[ \sign: H_n (X, \I^\infty(L)) \xrightarrow{ \cong } H^{\mathrm{BM}}_n(X(\R), \mathbb{Z}(L))  \]
	is an  isomorphism for all $n \in \Z $, where the right-hand side is the classical Borel-Moore homology in topology (\cite{BM}).
\end{proposition}
\begin{proof}
	Note that by Proposition \ref{Prop: I-cohomology-Real}, the signature map induces the following isomorphism 
	$$H_n (X, \I^\infty(L)) \cong H^{\mathrm{BM}}_n(X_r, L)$$ 
	The result follows from Remark \ref{rmk:BMiso}. 
\end{proof}

\begin{corollary}
\label{cor:kun_BMr}
Let $X,Y$ be two quasi-projective $\mathbb{R}$-schemes. Let $L$ (respectively $L'$) be a line bundle on $X$ (respectively $Y$). Then there is a split short exact sequence
\begin{align}
\begin{split}
0
&\to
\bigoplus\limits_{p+q=n}H^{\mathrm{BM}}_p(X,L)\otimes_\mathbb{Z}H^{\mathrm{BM}}_q(Y,L')
\to
H^{\mathrm{BM}}_n(X\times_\mathbb{R}Y,L\boxtimes_\mathbb{R}L')\\
&\to
\bigoplus\limits_{p+q=n-1}\operatorname{Tor}(H^{\mathrm{BM}}_p(X,L),H^{\mathrm{BM}}_q(Y,L'))
\to
0
\end{split}
\end{align}
(see~\ref{sec:outtens} for the notation $L\boxtimes_{\mathbb{R}}L'$).
\end{corollary}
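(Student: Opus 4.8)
The plan is to pass to the topological realization: by the comparison with semialgebraic Borel--Moore homology established above, the statement reduces to the classical Künneth formula for Borel--Moore homology of locally compact spaces, after which the algebraic Künneth theorem produces the split short exact sequence.

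More precisely, I would first record that for any quasi-projective $\mathbb{R}$-scheme $X$ and any line bundle $L$ on $X$ there is a natural isomorphism
\[
H^{BMr}_n(X/\mathbb{R},L)\;\cong\;H^{BM}_n\bigl(X(\mathbb{R}),\mathbb{Z}(L)\bigr),
\]
where the right-hand side is the topological Borel--Moore homology of the semialgebraic set $X(\mathbb{R})$ with coefficients in the rank-one local system $\mathbb{Z}(L)$; this is Proposition~\ref{lm:BMtop} combined with Corollary~\ref{coro:signature-borel-moore} (which in turn rests on Proposition~\ref{Prop: I-cohomology-Real} and the remark following Definition~\ref{def:BMhom}). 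Being semialgebraic, $X(\mathbb{R})$ is locally compact and finite-dimensional, and $\mathbb{Z}(L)$ is torsion-free since it is locally isomorphic to the constant sheaf $\mathbb{Z}$. Next I would check the two compatibilities needed to feed this into a Künneth statement: that $(X\times_{\mathbb{R}}Y)(\mathbb{R})=X(\mathbb{R})\times Y(\mathbb{R})$ as semialgebraic (hence topological) spaces, which is immediate from the universal property of the fibre product; and that under this identification the local system $\mathbb{Z}(L\boxtimes_{\mathbb{R}}L')$ attached to $L\boxtimes_{\mathbb{R}}L'=p_X^*L\otimes p_Y^*L'$ is the external tensor product $\mathbb{Z}(L)\boxtimes\mathbb{Z}(L')$, which follows because the assignment $\mathcal{F}\mapsto\mathbb{Z}(\mathcal{F})$ of~\ref{sec:conventionsch}(d),(e) commutes with pullback and turns a tensor product of line bundles into the tensor product of the associated orientation sheaves.

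It then remains to invoke the Künneth theorem for Borel--Moore homology of locally compact finite-dimensional spaces with torsion-free local coefficients: for $A=X(\mathbb{R})$, $B=Y(\mathbb{R})$, $\mathcal{L}=\mathbb{Z}(L)$ and $\mathcal{L}'=\mathbb{Z}(L')$ there is a natural split short exact sequence
\begin{align*}
0\to\bigoplus_{p+q=n}H^{BM}_p(A,\mathcal{L})\otimes_{\mathbb{Z}}H^{BM}_q(B,\mathcal{L}')
&\to H^{BM}_n(A\times B,\mathcal{L}\boxtimes\mathcal{L}')\\
&\to\bigoplus_{p+q=n-1}\operatorname{Tor}^{\mathbb{Z}}\bigl(H^{BM}_p(A,\mathcal{L}),H^{BM}_q(B,\mathcal{L}')\bigr)\to 0 .
\end{align*}
One way to obtain it is from the Künneth isomorphism $\omega_{A\times B}\simeq\omega_A\boxtimes\omega_B$ for dualizing complexes together with Verdier duality and the fact that the compactly supported cochains of a semialgebraic set form a perfect complex of abelian groups, followed by the algebraic Künneth theorem; see \cite{Bre}, \cite{Iver}, or the semialgebraic version in \cite{Del}. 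Substituting the isomorphism of the previous step into each term and using the product compatibilities for the middle term yields exactly the asserted sequence for $H^{BMr}$, and it splits because the algebraic Künneth sequence always does.

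The only genuinely delicate point is the bookkeeping of these compatibilities — that the comparison isomorphism $H^{BMr}_\bullet(X/\mathbb{R},L)\cong H^{BM}_\bullet(X(\mathbb{R}),\mathbb{Z}(L))$ respects exterior products and line-bundle twists, so that the middle term is correctly identified with $H^{BMr}_n(X\times_{\mathbb{R}}Y/\mathbb{R},L\boxtimes_{\mathbb{R}}L')$. Alternatively one can stay inside the sheaf-theoretic framework: from the identification $\underline{C}(X_r,f^!\mathcal{O}_{\mathbb{R}}\otimes L)\simeq f^!_r\mathbb{Z}\otimes\mathbb{Z}(L)$ (Theorem~\ref{th:fupper!compat}, Corollary~\ref{cor:sch2.3} and Lemma~\ref{lem:CXr_compat_lb}, using $\underline{C}(\mathbb{R}_r,\mathcal{O}_{\mathbb{R}})\simeq\mathbb{Z}$) and the Künneth isomorphism $h^!_r\mathbb{Z}\simeq f^!_r\mathbb{Z}\boxtimes_{\mathbb{R}}g^!_r\mathbb{Z}$ of the six-functor formalism on the motivic triangulated category $D((-)_r)$, where $h\colon X\times_{\mathbb{R}}Y\to\mathbb{R}$, one gets $\underline{C}\bigl((X\times_{\mathbb{R}}Y)_r,h^!\mathcal{O}_{\mathbb{R}}\otimes(L\boxtimes_{\mathbb{R}}L')\bigr)\simeq\underline{C}(X_r,f^!\mathcal{O}_{\mathbb{R}}\otimes L)\boxtimes_{\mathbb{R}}\underline{C}(Y_r,g^!\mathcal{O}_{\mathbb{R}}\otimes L')$; taking global sections of these complexes of acyclic sheaves with torsion-free sections and applying the algebraic Künneth theorem then concludes, although the hypercohomology Künneth step there relies on the same finiteness of $H^{BMr}_\bullet$ that the topological comparison supplies, so the two routes ultimately meet at the same point.
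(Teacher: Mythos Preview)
Your approach is essentially the paper's: reduce via the comparison isomorphism (Proposition~\ref{lm:BMtop}) to topological Borel--Moore homology of the real points, use $(X\times_{\mathbb{R}}Y)(\mathbb{R})\simeq X(\mathbb{R})\times Y(\mathbb{R})$, and invoke the classical K\"unneth theorem. The paper cites \cite[III 11.10]{Del} for the reduction and \cite[V.14.1]{Bre} for the K\"unneth formula, checking the required $hlc$ hypothesis via local contractibility of $X(\mathbb{R})$ and $Y(\mathbb{R})$; you supply more of the bookkeeping on line-bundle twists and the external product of local systems (which the paper leaves implicit), but you do not name the $hlc$ condition explicitly---this is the one technical hypothesis you should make sure is verified when citing Bredon's theorem, and local contractibility of semialgebraic sets handles it. Your alternative six-functor route is extra and not in the paper.
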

\proof
By \cite[III 11.10]{Del} we are reduced to topological Borel-Moore homology (\cite[V.3]{Bre}). Since $(X\times_\mathbb{R}Y)(\mathbb{R})\simeq X(\mathbb{R})\times Y(\mathbb{R})$, the result then follows from \cite[V.14.1]{Bre}: the $hlc$ condition on $X(\mathbb{R})$ and $Y(\mathbb{R})$ is satisfied since both spaces are locally contractible.
\endproof

\begin{remark}
\begin{enumerate}
\item The $\operatorname{Tor}$-terms are non-zero in general, unless we work with rational coefficients. For example, for any pair of coprime integers $(p,q)$ there exists a smooth $\mathbb{R}$-scheme whose set of $\mathbb{R}$-points is homeomorphic to the $3$-dimensional lens space $L(p,q)$, and consequently its Borel-Moore homology, which agrees with singular homology since lens spaces are compact, has $p$-torsion.
\item %
We expect the K\"unneth formula to hold over any real closed field, 
which we will investigate in a forthcoming work. However, when the base field $k$ is not real closed, the situation is a priori more complicated as the real scheme of a fiber product over $k$ is not necessarily the Cartesian product, and one also needs to take care of the geometric components.
\item Since the complex $\underline{G}(X_r,K^{ })$ is a complex of acyclic sheaves, the hypercohomology groups $H^{\mathrm{BM}}_{n}(X,L)$ can be computed as the cohomology groups of the complex of global sections. By \cite[Theorem 8.10]{DFJK} we know, after inverting $2$, that the groups $H^{\mathrm{BM}}_{n}(X,L)$ agree with the twisted $\mathbb{A}^1$-bivariant groups in the negative part of the $\mathbb{A}^1$-derived category $D^{\mathbb{A}^1}$. In a forthcoming work we will study a link of these groups with Chow-Witt groups.
\end{enumerate}
\end{remark}








\end{document}